
\documentclass{article}
\usepackage{proof,latexsym,amssymb,graphics}
\usepackage[all]{xy}
\newtheorem{definition}{Definition}[section]

\newtheorem{lemma}[definition]{Lemma}
\newtheorem{corollary}[definition]{Corollary}
\newtheorem{proposition}[definition]{Proposition}

\newtheorem{remark}[definition]{Remark}
\newenvironment{proof}{\vspace{3pt}\textsc{Proof:}\quad }
                       {\hfill \hbox{q.e.d.}\vspace{3pt}}
\def\A{{\mathcal A}} 
\def\LL{{\mathcal L}}
\def\P{{\mathcal P}} 
\def\S{{\mathcal S}} \def\T{{\mathcal T}}

\relpenalty=10000
\def\sub{\subseteq }

\renewcommand{\land}{\mathrel\&}
\def\releps{\mathrel\epsilon}
\def\cov{\lhd}
\def\fregiu{\mathord\downarrow}
\def\fregiubi{\mathrel\downarrow}
\renewcommand{\land}{\mathrel\&}

\def\to{\rightarrow}

\global\oddsidemargin=5mm \global\evensidemargin=5mm

\textwidth=150mm \global\textheight=220mm
\setlength{\topmargin}{-10mm} \setlength{\headsep}{10mm}
\setlength{\unitlength}{1.25pt}

\begin{document}
\bibliographystyle{siam}
\title{Convergence in Formal Topology: a unifying notion}
\date{}
\author{Francesco Ciraulo \quad Maria Emilia Maietti \quad Giovanni Sambin\\[8pt]
\normalsize Dipartimento di Matematica, University of  Padova\\
\normalsize e-mail: \{ciraulo,maietti,sambin\}@math.unipd.it}

\maketitle

\begin{abstract}
\noindent Several variations on the definition of a Formal Topology
exist in the literature. They differ on how they express
\emph{convergence}, the formal property corresponding to the fact
that open subsets are closed under finite intersections. We
introduce a general notion of convergence of which any previous
definition is a special case. This leads to a predicative
presentation and inductive generation of locales (\emph{formal
covers}), commutative quantales (\emph{convergent covers}) and
suplattices (\emph{basic covers}) in a uniform way. Thanks to our
abstract treatment of convergence, we are able to specify
categorically the precise sense according to which our inductively
generated structures are free, thus refining Johnstone's coverage
theorem.

We also obtain a natural and predicative version of a fundamental
result by Joyal and Tierney: convergent covers (commutative
quantales) correspond to commutative co-semigroups over the category
of basic covers (suplattices).
\end{abstract}

\emph{2010 Mathematics Subject Classification:} 54A05, 03F65, 06D22, 06F07, 06B23, 18B35

\emph{Key words:} Formal Topology, Constructive Mathematics.
\section{Introduction}

This paper aims to contribute to the development of constructive
topology. By constructive topology we mean topology developed in a
predicative and intuitionistic foundation. In order to avoid
impredicative definitions the foundation must distinguish sets from
collections; a typical example of a collection is given by all
subsets of a set.  The
 usual axiom of separation is then restricted
to formulas that do not contain quantifications over collections.
See \cite{MS05} and \cite{M09} for a formal system and for further
explanations about such a foundation.

It is commonly accepted that, in order to develop topology
constructively, the pointfree approach of locale theory~\cite{stone spaces}
 is the most convenient~\cite{PML70,Fourman&Grayson, johnstone-points,somepoints}.   
The predicative development of locale theory, 
 started by Per Martin-L{\"o}f and the third author in
\cite{IFS},  is now known as Formal Topology.  
To define  a locale predicatively, one needs 
 a base of opens that is a set, while the whole locale is only a 
 collection~\cite{curi}. 
 The original notion of formal
topology, proposed in \cite{IFS}, corresponds to that of  open (or overt)
locale~\cite{galoisexten,negridomains}. Here we call {\it formal cover} 
the generalization of formal topology corresponding to a locale;
it is obtained by
simply dropping the so-called {\it positivity predicate} in \cite{IFS}.
Because of the presence of bases,  morphisms between formal covers/formal 
topologies are suitable relations.
So they acquire a direct intuitive interpretation both in
the direction of locales and in the opposite direction, namely that
of frames~\cite{stone spaces}.

Since its introduction in \cite{IFS}, the notion of formal topology
has been presented in several different ways. One of the motivations was
that of including relevant examples in a direct way, without 
artificial tricks. For instance, the original version in \cite{IFS}, or that in \cite{somepoints},
works well for Stone spaces \cite{saranegri},  Scott domains,
Zariski topologies,..., while  the variant in \cite{ftopposets}, \cite{cssv} is more
suitable for Baire spaces,  algebraic domains, Kripke models and
discrete topologies.

No variant superseded the others; actually, while in the general
case they are all equivalent, they are no longer equivalent in the
unary or finitary case. For example, one version of unary formal
topologies represents Scott domains~\cite{SVV}, while another
algebraic domains~\cite{sambin-domains as formal topologies}.

All presentations of formal topology in the literature differ only
in their way of expressing closure of opens under finite
intersections, which in a pointfree approach appears as
distributivity of finite meets over arbitrary joins in the lattice
of opens. This property is here called {\it convergence}.
In order to express convergence, in this paper we introduce a
binary operation $\circ$ on subsets of the base with suitable
conditions. We thus achieve a new unifying notion of formal cover/formal topology.
All previous presentations  are obtained as a special case by
imposing some further conditions on the operation $\circ$.

Our definition gives a new predicative presentation of locales.
This is obtained in a modular way (see table page~\pageref{tabella})
starting from suplattices and passing through a new presentation of
quantales. The constructive notion corresponding to suplattices is
called  \emph{basic cover} \cite{somepoints,bp}. It is thought of
as a generalized pointfree topology without convergence. The
category of basic covers is (impredicatively) dual to that of
suplattices and hence it gives a genuine generalization of the
category of locales.

By choosing to work in the direction of locale maps, the category of
basic covers becomes the right setting to prove a predicative
counterpart of Joyal-Tierney's result \cite{galoisexten} stating
that frames
 are  special commutative monoids over the category of
suplattices. We achieve this by introducing the notion of
\emph{convergent cover}, namely a basic cover equipped with a weak form of the operation
$\circ$. The category of convergent covers is
(impredicatively) dual to that of commutative quantales.\footnote{Actually, 
a constructive presentation of quantales is possible also
using the notion of pretopology~\cite{pretopologies}, but that
presentation does not allow to prove  Joyal-Tierney's result in a
direct way.}

By means of the new notions we can prove
 Joyal-Tierney's result in the following dualized form:
convergent covers are commutative co-semigroups in the category of
basic covers.  A predicative proof of this is possible as soon as
the tensor product exists predicatively, a fact which happens in the
inductively generated case.

Just as a locale is  a quantale in which multiplication and meet coincide,
a \emph{formal cover} is a
convergent cover in which the operation $\circ$ corresponds to the
lattice-theoretic meet. The category of formal covers is then
(impredicatively) dual to that of  locales.

The inclusion  of all previous definitions in
our new one leads to a unified, general method of inductive
generation that applies to all variants. In particular, the rules
of inductive generation first given in \cite{cssv}  become a special case of ours. 
In addition, we provide a method for generating also suplattices 
(basic covers) and quantales (convergent
covers) in a modular way.

Thanks to the abstract character of our presentation of convergence,
we are able to give a categorical reading of the inductive
generation of formal covers and convergent covers. We can specify in
what sense these constructions are free by showing that they provide 
object parts of adjoints to suitable forgetful functors. These
results can be read as a refinement of Johnstone's coverage theorem~\cite{stone spaces,vickers}.

All the definitions and results of the present paper work equally
well, with no modification, also when a \emph{positivity relation}
$\ltimes$ is added besides the cover \cite{somepoints,bp}. In fact
the addition of $\ltimes$, whose aim is to give a primitive
pointfree version of closed subsets, does not affect the notion of
convergence.

A general treatment of convergence as the one given here seems to be
 a necessary step towards a purely algebraic development
of constructive topology. The present approach via the operation
$\circ$ on subsets can be easily generalized to the algebraic
framework basing on overlap algebras \cite{regular,bp,CMT}.

\section{Predicative suplattices: the notion of a basic cover} 

The notion of basic cover recalled below can be read as a
predicative topological presentation of a suplattice, or complete
join-semilattice \cite{stone spaces}. The corresponding notion of
morphism makes the category of basic covers {\bf BCov} dual
(impredicatively) to that of suplattices and sup-preserving
maps. This choice for the direction of arrows is justified by
the fact that a suitable subcategory of {\bf BCov} becomes
equivalent to the category of locales; indeed, reaching a predicative
version of locales in the context of basic covers is one of our
aims.

We use the notation $Y\sub X$ to mean that $Y$ is a subset of $X$,
where $X$ can be either a set or a collection. 
A subset in our foundation is defined by a propositional function with
quantifications restricted to sets (see  \cite{M09} and \cite{bp} for a more precise explanation).
The collection of all subsets of a set $S$ is denoted by $\P(S)$.

\begin{definition}\label{def. basic cover}
Let $S$ be a set. A \emph{basic cover} on $S$ is a relation
$\cov\sub S\times\P(S)$ between elements and subsets of $S$ that 
satisfies the following rules for every $a,b\in S$ and $U,V\sub S$:
$$
\infer[\;reflexivity\qquad and]{a\cov U}{a\releps U}\qquad
\infer[\;transitivity]{a\cov V}{a\cov U & U\cov V}
$$ 
where
$U\cov V$ $\stackrel{def}{\Longleftrightarrow}$ $(\forall\, b\releps
U)$ $(b\cov V)$.
\end{definition}

Although this notion is pretty general, $S$ is often interpreted
as a set of (names of) open subsets of a topology, typically a base.
Then $a\cov U$ is read: ``the open subset (whose name is) $a$ is
contained in the union of those belonging to $U$''. For the sake of
notation, we shall often confuse elements with singletons; for
instance, we shall write $a\cov b$ instead of $a\cov\{b\}$, for
$a,b\in S$. Moreover, we shall use the term ``basic cover'' also for
the pair $(S,\cov)$ itself.

For every basic cover $(S,\cov)$ and every subset $U\sub S$, we put:
\begin{equation}
\A U\stackrel{def}{=}\{a\in S\ |\ a\cov U\}.
\end{equation}
This defines a \emph{saturation} (or closure operator) on $\P(S)$, that is, a map 
$\A$ $:$ $\P(S)$ $\longrightarrow$ $\P(S)$ which is monotone (with respect to
inclusion), idempotent and expansive (that is, $U\sub \A U$ for all $U\sub S$). 
Vice versa, if $\A$ is a
saturation on $\P(S)$, then  $a\cov U$
$\stackrel{def}{\Leftrightarrow}$ $a\releps\A U$ defines a basic cover on $S$. The
correspondence between basic covers on $S$ and saturations on
$\P(S)$ is a bijection (see \cite{bs} for details).

\begin{definition}
For every basic cover $(S,\cov)$ and every subset $U\sub S$,  we say
that $U$ is a \emph{formal open}, or \emph{saturated}, if $U=\A U$.  We
write $U=_\A V$ for $\A U=\A V$.
The collection of all formal open subsets is written $Sat(\A)$. 
\end{definition} 
Since $\A$ is idempotent, $Sat(\A)$ can be described also
as the collection of all subsets of the form $\A U$, for $U\sub S$.
Moreover, it is easy to see that $Sat(\A)$ can be identified with
the quotient of $\P(S)$ modulo the equivalence relation $=_\A$.

It is well known that the collection of all fixed points of a
saturation $\A$ can be given the structure of a suplattice. 
Joins are defined by:
\begin{equation}\label{def join SatA}
{\bigvee_{i\in I}}^\A\A W_i\ \stackrel{def}{=}\ \A\bigcup_{i\in I}\A W_i\ =\
\A\bigcup_{i\in I}W_i\,.
\end{equation} 
It is easy to see that the second equality holds; it says that  $=_\A$
is respected by unions.

As a suplattice, $Sat(\A)$ is generated by the
set-indexed family $\{\A a\ |\ a\in S\}$. Vice versa, if $\LL$ is a
suplattice which admits a set $S\sub\LL$ of
generators,\footnote{This requirement is not trivial since,
predicatively, suplattices are not sets (see \cite{curi}).} then the
structure $(S,\cov)$, where $a\cov U$ if $a\leq\bigvee U$, is a
basic cover whose corresponding $Sat(\A)$ is isomorphic to $\LL$.
Thus, at least impredicatively, every suplattice is of the form
$Sat(\A)$ (see \cite{bs} for details).

Impredicatively, every suplattice has a meet operation too. In the
case of the suplattice $Sat(\A)$, meets exist also predicatively and
are given by
\begin{equation}\label{meet in SatA}
\A U\wedge^\A\A V\ =\ \A(\A U\cap\A V)\ =\ \A U\cap\A
V \, .
\end{equation}

\subsection{Morphisms between basic covers}

When $r$ is a binary relation between two
sets $S$ and $T$, as in \cite{bp}  we define an operator $r^-$ $:$ $\P(T)$
$\longrightarrow$ $\P(S)$ by putting
\begin{equation}
a\releps r^-V\Longleftrightarrow(\exists b\releps V)(a\,r\,b)
\end{equation}
for every $a\in S$ and $V\sub T$.

\begin{definition}\label{def.basic cover map}
Let $\S =(S,\cov_\S)$ and $\T =(T,\cov_\T)$ be two basic covers. A
relation $r$ between $S$ and $T$ is a $\emph{basic cover map}$, or
it is said to respect covers,  if:
\begin{equation}\label{eq. basic cover map} 
b\cov_\T V\quad\Longrightarrow\quad r^-b\cov_\S r^-V
\end{equation} 
for every $b\in T$ and $V\sub T$.
Two basic cover maps $r_1$ and $r_2$ from $\S$ to $\T$ are declared
\emph{equal} if ${r_1}^- b$ $=_{\A_\S}$ ${r_2}^- b$ for all $b\in
T$.\footnote{Thus, properly speaking, a morphism between two basic
covers is an equivalence class of relations satisfying (\ref{eq.
basic cover map}).}
\end{definition}

It is possible to show (see \cite{bp}) that equation (\ref{eq. basic
cover map}) is exactly what is needed to make the assignment $\A_\T
V$ $\longmapsto$ $\A_\S\, r^-V$ a well-defined sup-preserving map from $Sat(\A_\T)$
to $Sat(\A_\S)$. 
Vice versa, each sup-preserving map $h$ $:$ $Sat(\A_\T)$
$\longrightarrow$ $Sat(\A_\S)$ can be obtained in this way. In fact,
it corresponds to the relation $r$ between $S$ and $T$ defined by
$a\,r\,b$ if $a\releps h(\A_\T b)$. One can see that two relations $r_1$
and $r_2$ are equal as basic cover maps exactly when ${r_1}^- V$
$=_{\A_\S}$ ${r_2}^- V$ for all $V\sub T$, that is, exactly when they
correspond to the same map between suplattices.

\begin{proposition}\label{prop. BCov dual SupLat}
Basic covers and basic cover maps (modulo their equality) form a
category, called {\bf BCov}. Identities are represented by (the
class of) identity relations. Composition is usual composition of
relations.

The category {\bf BCov} is (impredicatively) dual to the category
{\bf SupLat} of suplattices and sup-preserving maps.
\end{proposition}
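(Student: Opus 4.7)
The plan is to split the proof into verifying that \textbf{BCov} is a category and then establishing its duality with \textbf{SupLat}.

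For the categorical structure, the first step is to check that for every basic cover $\S=(S,\cov_\S)$ the identity relation $Id_S$ is a basic cover map: since $Id_S^- V = V$ for all $V\sub S$, condition (\ref{eq. basic cover map}) collapses to a tautology. The key preliminary observation I would then record is that the cover map condition extends from singletons to arbitrary subsets: if $r:\S\to\T$ respects covers and $U\cov_\T V$, then $r^- U\cov_\S r^- V$. This is because $r^- U=\bigcup_{b\releps U}r^- b$, so any $a\releps r^- U$ lies in some $r^- b$ with $b\releps U$; from $b\cov_\T V$ we get $r^- b\cov_\S r^- V$ and hence $a\cov_\S r^- V$ by reflexivity and transitivity. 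With this in hand, closure under composition is immediate from the identity $(s\circ r)^- = r^-\circ s^-$: if $s:\T\to\U$ and $c\cov_\U W$, then $s^- c\cov_\T s^- W$ by the map property of $s$, and applying the extended property of $r$ yields $r^- s^- c\cov_\S r^- s^- W$.

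Next I would show that the equality of Definition~\ref{def.basic cover map} is a congruence for composition. Here the essential fact, already recalled in the text, is that ${r_1}^- b =_{\A_\S} {r_2}^- b$ for all $b\in T$ implies ${r_1}^- V =_{\A_\S} {r_2}^- V$ for all $V\sub T$. A direct chase shows that if $r_1,r_2$ are equal as basic cover maps and $s_1,s_2$ likewise, then for every $c\in U$
$$
{r_2}^- {s_2}^- c \;=_{\A_\S}\; {r_1}^- {s_2}^- c \;=_{\A_\S}\; {r_1}^- {s_1}^- c,
$$
the first equality from the equivalence of $r_1,r_2$ at the subset $s_2^- c$, and the second from ${s_1}^- c =_{\A_\T} {s_2}^- c$ propagated through $r_1$ via the extended cover map property. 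The unit and associativity axioms then reduce to the corresponding identities for relational composition, which hold on the nose and survive passage to equivalence classes.

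For the duality, the bijective correspondence between basic cover maps $r:\S\to\T$ and sup-preserving maps $h_r:Sat(\A_\T)\to Sat(\A_\S)$ given by $h_r(\A_\T V) := \A_\S r^- V$ has already been discussed in the text. I would assemble this into a functor $F:\mathbf{BCov}\to\mathbf{SupLat}^{op}$, with functoriality on identities immediate and on composites witnessed by
$$
h_{s\circ r}(\A_\U W) \;=\; \A_\S r^- s^- W \;=\; \A_\S r^- \A_\T s^- W \;=\; h_r\bigl(h_s(\A_\U W)\bigr),
$$
where the middle equality uses the extended cover map property of $r$ applied to the mutual covers between $s^- W$ and $\A_\T s^- W$. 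The quasi-inverse described in the text (sending $h$ to $a\,r\,b\Leftrightarrow a\releps h(\A_\T b)$) supplies fullness and faithfulness, while impredicative essential surjectivity is the remark that any suplattice $\LL$ admits itself as a collection of generators and hence arises as some $Sat(\A)$. The main technical point to keep straight throughout is the direction reversal induced by $r\mapsto r^-$: it is precisely this that turns composition of basic cover maps into reverse composition of sup-preserving maps, and so forces a duality rather than an equivalence. Once that bookkeeping is fixed, every remaining verification reduces either to formal properties of the operators $r^-$ and $\A$, or to facts already recorded in the excerpt.
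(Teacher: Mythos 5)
Your proposal is correct and follows the same route as the paper, whose proof is deliberately terse: it declares the category axioms straightforward and points to the preceding discussion (and to the cited literature) for the correspondence $r\mapsto(\A_\T V\mapsto\A_\S r^-V)$ that yields the duality with \textbf{SupLat}. You have simply carried out in full the routine verifications (extension of the cover-map condition from singletons to subsets, congruence of equality under composition, functoriality, and impredicative essential surjectivity) that the paper leaves implicit.
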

\begin{proof}
It is straightforward to show that {\bf BCov} is a category. The
previous discussion shows that it is dual to {\bf SupLat} (see
\cite{bs} for more details).
\end{proof}

\subsection{Inductive generation of basic covers}
\label{bcovi}

The authors of \cite{cssv} describe a method for \emph{inductively
generating} basic covers and give a predicative justification for
it. Namely, they construct a basic cover satisfying arbitrary axioms
of the form $a\cov U$. The problem is that simply taking the reflexive
and transitive (in the sense of definition \ref{def. basic cover})
closure of the axioms is not a well-founded procedure from a
predicative point of view. In fact,  accepting transitivity as an
inductive rule requires to consider a collection of assumptions, one
for each subset $U$ in the rule.
So, an impredicative argument is necessary to get a fixed point of the 
operator associated with the inductive clauses. This is confirmed by the fact that
some formal topologies  cannot be inductively generated (see \cite{cssv}).

In \cite{cssv} it is shown how to solve this problem. Given a set
$S$, one needs a set-indexed family of axioms of the form $a\cov U$.
This means that one has a set $I(a)$ for each $a\in S$ and, for each
$a\in S$ and $i\in I(A)$, a subset $C(a,i)\sub S$ with the intended
meaning that $a\cov C(a,i)$ holds. The pair $I,C$ is called an
\emph{axiom-set}.

With every axiom-set $I,C$ one can associate a basic cover, say
$\cov_{I,C}$, such that: 
\begin{itemize}
\item[(i)] $a\cov_{I,C}C(a,i)$ for every $a\in S$
and $i\in I(a)$; 
\item[(ii)] if $\cov'$ is another basic cover such that
$a\cov'C(a,i)$ for all $a\in S$ and $i\in I(A)$, then $a\cov_{I,C}
U$ $\Rightarrow$ $a\cov' U$ for all $a\in S$ and $U\sub S$. 
\end{itemize}
In other
words, $\cov_{I,C}$ is the least cover satisfying the axioms
$a\cov_{I,C}C(a,i)$ for all $a\in S$ and $i\in I(a)$. 
One can show (see~\cite{cssv}) that 
$\cov_{I,C}$ is the unique  relation
between elements and subsets of $S$ which satisfies:
\begin{itemize}
\item[i.]  \enspace $\displaystyle{\frac{a\releps U}{a\cov U}}$ \enspace \emph{reflexivity};
\item[ii.]  \enspace $\displaystyle{\frac{i\in I(a) \quad C(a,i)\cov U}{a\cov U}}$ 
 \enspace \emph{infinity} (transitivity restricted to axioms);
\item[iii.]  \enspace \emph{induction}: for every $P\sub S$, if $P$ satisfies
$$
\frac{b\releps U}{b \releps P} \quad \mbox{ and } \quad 
\frac{i\in I(b) \quad  C(b,i)\sub P}{b \releps P} \quad \mbox{ for all $b\in S$}
$$
then $a\cov U$ implies $a\releps P$;
\end{itemize}
for all $a\in S$ and $U\sub S$. 

As recalled in \cite{cssv}, this kind of
inductive definition can be formalized and justified in a
constructive framework such as Martin-L\"of type theory.
In practice, proving $a\cov U \to a\releps P$ by induction on $a\cov U$ 
means checking that
$a\releps P$ holds in either of the two cases: the assumption $a\releps U$ 
and the inductive hypothesis
$C(a,i)\sub P$ for some $i\in I(a)$.

\begin{definition}\label{prop. ind. gen. of basic covers}
A basic cover $(S,\cov)$ is \emph{inductively generated} if there
exists an axiom-set $I,C$ such that $\cov$ = $\cov_{I,C}$, that is,
$\cov$  satisfies the rules above.

We call {\bf BCov$_i$} the full subcategory of {\bf BCov} whose
objects are inductively generated.
\end{definition}

We recall from \cite{cssv} that the category {\bf BCov$_i$} of inductively 
generated basic cover  is predicatively a {\it proper} subcategory of {\bf BCov}, 
since there are examples
of basic covers (actually formal covers!) that cannot be inductively generated.

When restricting to inductively generated basic covers,  
 a relation  is a basic cover map if it respects the axioms in the following sense:

\begin{lemma}\label{lemma continuity for generated covers}
Let $\S$ = $(S,\cov_{\S})$ and $\T$ = $(T,\cov_{\T})$ be two basic covers and
let $r$ be a relation between $S$ and $T$. If $\T$ is inductively
generated by the axiom-set $I,C$, then $r$ is a basic cover map from
$\S$ to $\T$ if and only if $r^-b\cov_{\S} r^-C(b,i)$ holds for all $b\in
T$ and all $i\in I(b)$.
\end{lemma}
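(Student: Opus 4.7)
The plan is to prove the two implications separately, with the backward direction being the substantive one and handled by induction on the inductively generated cover $\cov_\T$.

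For the forward direction, assume $r$ is a basic cover map. From the inductive generation rules, reflexivity together with the \emph{infinity} rule (applied with $U=C(b,i)$ and using reflexivity of $\cov_\T$ on $C(b,i)$) gives $b\cov_\T C(b,i)$ for every $b\in T$ and $i\in I(b)$. Applying the basic cover map condition (\ref{eq. basic cover map}) yields $r^-b\cov_\S r^-C(b,i)$ at once.

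For the backward direction, fix $V\sub T$ and set $P=\{b\in T\ |\ r^-b\cov_\S r^-V\}$. We want to show $b\cov_\T V\Rightarrow b\releps P$ for every $b\in T$, and then apply this to any element of an arbitrary $U$ with $U\cov_\T V$ to conclude. We check the two premises of the induction rule (iii) for $P$. If $b\releps V$, then $r^-b\sub r^-V$ by definition of $r^-$, so reflexivity of $\cov_\S$ (applied pointwise) gives $r^-b\cov_\S r^-V$, i.e.\ $b\releps P$. Next, suppose $i\in I(b)$ and $C(b,i)\sub P$; by hypothesis we have $r^-b\cov_\S r^-C(b,i)$, and from $C(b,i)\sub P$ we get $r^-c\cov_\S r^-V$ for every $c\releps C(b,i)$. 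Since any $a\releps r^-C(b,i)$ satisfies $a\releps r^-c$ for some $c\releps C(b,i)$, this says exactly $r^-C(b,i)\cov_\S r^-V$. Two applications of transitivity of $\cov_\S$ then produce $r^-b\cov_\S r^-V$, so $b\releps P$.

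The only subtle point, and the one I expect to be the main obstacle to a fully rigorous write-up, is making sure that the argument is phrased as an application of the induction principle (iii) — which requires fixing the ``target'' $V$ before forming $P$ — rather than as a naive double induction on both arguments of $\cov_\T$; and, relatedly, correctly unfolding the macro notation $U\cov V$ to move between statements about single elements (as required by (iii)) and the subset-level conclusion needed for (\ref{eq. basic cover map}).
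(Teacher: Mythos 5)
Your proof is correct and follows essentially the same route as the paper: the forward direction uses $b\cov_\T C(b,i)$ (infinity plus reflexivity) and the cover-map condition, while the backward direction is the induction on the generation of $b\cov_\T V$ with the reflexivity and infinity cases handled exactly as in the paper's argument, your predicate $P$ being just the explicit form of that induction. The only cosmetic quibble is that the final step needs a single (elementwise) application of transitivity, not two.
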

\begin{proof} If $r$ is a basic cover map, then for all $b\in T$
and $i\in I(b)$ one has $r^-b\cov_{\S} r^-C(b,i)$ because $b\cov_{\T} C(b,i)$
holds. 
Vice versa, under the assumption $r^-b\cov_{\S} r^-C(b,i)$  for all $b\in
T$ and all $i\in I(b)$, we prove that $b\cov_{\T} V$ $\Rightarrow$
$r^-b\cov_{\S} r^- V$ for all $b\in T$ and $V\sub T$ by
induction on the generation of $b\cov_{\T} V$. 
We have to consider two cases, depending on
whether $b\cov_{\T} V$ follows by reflexivity or infinity. In the first
case, it must be $b\releps V$; so $r^-b\sub r^- V$ and hence $r^-
b\cov_{\S} r^- V$ by reflexivity of $\cov_{\S}$. In the second case, we have
  $C(b,i)\cov_{\T} V$ for some $i\in I(b)$. By the inductive
hypothesis applied to all elements of $C(b,i)$ we get $r^-C(b,i)\cov_{\S}
r^-V$. This, together with the assumption,
gives $r^-b\cov_{\S} r^- V$ (by transitivity of $\cov_{\S}$).
\end{proof}

Impredicatively, every basic cover $(S,\cov)$ can be generated by
means of an axiom-set $I,C$, where $I(a)$ = $\{U\sub S$ $|$ $a\cov
U\}$ for every $a\in S$ and $C(a,U)$ $=$ $U$ for $U\in I(A)$. So
{\bf BCov$_i$} and {\bf BCov} coincide impredicatively.

\begin{proposition}\label{tensor}
{\bf BCov$_i$} is a symmetric monoidal category.\footnote{For the
definition of a monoidal category see \cite{MacLane} p. 161;
``symmetric'' is defined on page 184.}
\end{proposition}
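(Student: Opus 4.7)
The plan is to lift the standard symmetric monoidal structure on $\mathbf{SupLat}$ across the duality of Proposition 2.4, checking at every step that the construction stays inside $\mathbf{BCov}_i$. The key step is a direct description of the tensor product: given $\S=(S,\cov_\S)$ and $\T=(T,\cov_\T)$ inductively generated by axiom-sets $(I_\S,C_\S)$ and $(I_\T,C_\T)$, I define $\S\otimes\T$ to live on the set $S\times T$ and to be generated by the axiom-set
$$
I(a,b) := I_\S(a)+I_\T(b),
$$
with $C((a,b),(i,0)) := C_\S(a,i)\times\{b\}$ and $C((a,b),(j,1)) := \{a\}\times C_\T(b,j)$. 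Since each $I(a,b)$ is a set, the resulting basic cover lies in $\mathbf{BCov}_i$. The unit object $\mathcal{I}$ is the basic cover on a singleton $\{*\}$ with empty axiom-set, so that $Sat(\A_\mathcal{I})$ is the two-element suplattice, i.e.\ the unit of $\otimes$ in $\mathbf{SupLat}$.

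Next I would define the structural morphisms. The associator $\alpha$, symmetry $\sigma$ and unitors $\lambda,\rho$ are given by the relations encoding the obvious bijections of underlying sets (e.g.\ $((a,b),c)\sse(a,(b,c))$ for $\alpha$ and $(a,b)\sse(b,a)$ for $\sigma$). By Lemma \ref{lemma continuity for generated covers} each of these is a basic cover map as soon as it sends generating axioms of the source to covers in the target, which is immediate from the symmetric form of our axiom-set. Bifunctoriality of $\otimes$ on morphisms is handled by pairing relations: given basic cover maps $r_1\sub S\times S'$ and $r_2\sub T\times T'$, the product relation $r_1\times r_2$ on $(S\times T)\times(S'\times T')$ respects the generating axioms of $\S'\otimes\T'$ by an easy check.

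The main obstacle is verifying the coherence diagrams (pentagon, triangle, hexagon). For this my plan is to establish a universal property: basic cover maps $\S\otimes\T\to\U$ correspond naturally to relations $r\sub(S\times T)\times U$ that are basic cover maps separately in each variable, i.e.\ such that, for each fixed $a\in S$ and $b\in T$, both $\{(a',c)\mid (a',b)\,r\,c\}$ and $\{(b',c)\mid (a,b')\,r\,c\}$ are basic cover maps $\S\to\U$ and $\T\to\U$ respectively. This biuniversal property reduces by Lemma \ref{lemma continuity for generated covers} to the two classes of generating axioms, which encode exactly continuity in each variable separately. Once this is in place, all structural isomorphisms and their coherences are forced by the standard symmetric monoidal structure on $\mathbf{SupLat}$, since our tensor then manifestly represents the same functor as the SupLat tensor product of $Sat(\A_\S)$ and $Sat(\A_\T)$, so the coherences transport across the duality of Proposition 2.4. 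The delicate point is precisely that the chosen set-indexed axioms already suffice — no further impredicative closure is needed — which is what keeps us within $\mathbf{BCov}_i$ rather than merely $\mathbf{BCov}$.
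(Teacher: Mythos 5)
Your construction matches the paper's: the same axiom-set on $S\times T$ (the two families $C(a,i)\times\{b\}$ and $\{a\}\times D(b,j)$), the same unit on a singleton with no axioms, the structural isomorphisms as the obvious relations checked through lemma~\ref{lemma continuity for generated covers}. Where you diverge is in how coherence is obtained, and there your argument has a real gap. The universal property you state is in the wrong direction. In \textbf{SupLat} the tensor is characterized by maps \emph{out of} it (bimorphisms); under the duality of proposition~\ref{prop. BCov dual SupLat} this corresponds to basic cover maps \emph{into} $\S\otimes\T$, i.e.\ morphisms $\U\to\S\otimes\T$, not morphisms $\S\otimes\T\to\U$ as you claim. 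Moreover lemma~\ref{lemma continuity for generated covers} reduces the cover-map condition to the generating axioms of the \emph{codomain}; for a relation $r$ from $\S\otimes\T$ to $\U$ the relevant axioms are those of $\U$, so your ``reduction to the two classes of generating axioms'' of $\S\otimes\T$ simply does not apply to maps out of the tensor. Even after flipping the direction, the identification of $Sat(\A_{\S\otimes\T})$ with the suplattice tensor of $Sat(\A_\S)$ and $Sat(\A_\T)$ — which is what would let you transport the coherence equations — rests on the fact that the generated cover satisfies $(a,b)\cov_{\S\otimes\T}U\times V$ whenever $a\cov_\S U$ and $b\cov_\T V$, equivalently $\A_\S U\times\A_\T V=_{\A_{\S\otimes\T}}U\times V$ (the paper's lemma~\ref{lemma tensor} and equation~(\ref{eq saturazione prodotto}), proved by double induction on the generated covers). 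You never establish this, yet it is exactly what your ``easy checks'' secretly use: already bifunctoriality (that $r_1\times r_2$ respects the generating axioms of $\T_1\otimes\T_2$) needs $r_1^-b_1\times r_2^-b_2\cov (r_1^-C(b_1,i))\times(r_2^-b_2)$, which is an instance of that lemma, not a formal consequence of the axiom-set.

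Two further remarks. First, the paper does not go through a universal property at all: it defines $r_1\otimes r_2$ by the equation $(r_1\otimes r_2)^-(b_1,b_2)=_{\A}(r_1^-b_1)\times(r_2^-b_2)$ and verifies naturality and the pentagon/triangle/hexagon equations directly as equalities $=_\A$ between concrete relations; this is feasible precisely because of lemma~\ref{lemma tensor}. Second, your plan to import coherence ``across the duality'' leans on an equivalence that the paper explicitly flags as impredicative; since the whole point of proposition~\ref{tensor} is to have the monoidal structure available inside \textbf{BCov}$_i$ predicatively, the coherence equations should be verified directly on the relations (they are statements of the form $r_1^-b=_\A r_2^-b$, which the double-induction lemma lets you check by hand), rather than deduced from the \textbf{SupLat} side.
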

\begin{proof}
Let $\S$ and $\T$ be two basic covers inductively generated by the
axiom-sets $I,C$ and $J,D$, respectively. The tensor $\S\otimes \T$
is the basic cover $(S\times T,\cov_{\S\otimes\T})$ generated by the
axioms \begin{equation}\label{eq. def. tensor product}
(a,b)\cov_{\S\otimes\T}C(a,i)\times
\{b\}\qquad\textrm{and}\qquad(a,b)\cov_{\S\otimes\T}\{a\}\times
D(b,j)\end{equation} for every $(a,b)\in S\times T$ and every $i\in
I(a)$ and $j\in J(b)$. This is a predicative rendering of the
construction of the the tensor product in \cite{galoisexten} as
presented in \cite{Vickers-Johnstone}. We leave the details showing
that this is indeed an axiom-set.

For every pair $(r_1,r_2)$ of morphisms in {\bf BCov$_i$}, with
$r_1$ : $\S_1$ $\longrightarrow$ $\T_1$ and $r_2$ : $\S_2$
$\longrightarrow$ $\T_2$, the tensor $r_1\otimes r_2$ is by definition the unique
morphism from $\S_1\otimes\S_2$ to $\T_1\otimes\T_2$ such that
$(r_1\otimes r_2)^-(b_1,b_2)$ $=_{\A_{\S_1\otimes\S_2}}$
$({r_1}^-b_1)\times({r_2}^-b_2)$ for all $b_1,b_2$.

The unit of the tensor, say $E$, is given by the cover generated on
the singleton set $\{\ast\}$ by means of no axioms. The associative
isomorphisms, the isomorphisms of the unit and the commutative
isomorphisms
$$\begin{array}{l}
\alpha_{\S_1,\S_2,\S_3}\ :\ \S_1\otimes(\,  \S_2\otimes \S_3\, )\
\longrightarrow\ (\, \S_1\otimes \S_2\, ) \otimes \S_3\\[5pt]
\lambda_{\S}\ :\ E\otimes \S\ \longrightarrow\
\S\qquad\qquad\rho_{\S}\ :\ \S\otimes E\ \longrightarrow\ \S
\\[5pt]
\gamma_{\S_1,\S_2}:\ \S_1\otimes \S_2\ \longrightarrow\ \S_2\otimes
\S_1
\end{array}$$
are defined in the obvious way. For instance, $\gamma_{\S_1,\S_2}$
is the unique basic cover map (up to equality) such that
${\gamma_{\S_1,\S_2}}^-(a_2,a_1)$ $=_{\A_{\S_1\otimes\S_2}}$
$(a_1,a_2)$. 
Similarly, $\alpha$ is defined by
${\alpha_{\S_1,\S_2,\S_3}}^-((a_1,a_2),a_3)$ $=_{\A_{(\S_1\otimes
\S_2)\otimes \S_3}}$ $(a_1,(a_2,a_3))$. It is easy to check that
$\alpha$, $\rho$, $\lambda$ and $\gamma$ are all natural and that
the following coherence conditions (see \cite{MacLane}) are
satisfied:
$$\begin{array}{l}
\alpha_{\S_1\otimes\S_2,\S_3,\S_4}\cdot\alpha_{\S_1,\S_2,\S_3\otimes\S_4}
=(\alpha_{\S_1,\S_2,\S_3}\otimes
id_{\S_4})\cdot\alpha_{\S_1,\S_2\otimes\S_3,\S_4}\cdot(
id_{\S_1}\otimes\alpha_{\S_2,\S_3,\S_4})\ ,\\[5pt]
(\rho_{\S_1}\otimes
id_{\S_2})\cdot\alpha_{\S_1,E,\S_2}=id_{\S_1}\otimes\lambda_{\S_2}
\ ,\qquad \lambda_E=\rho_E\ ,\\[5pt]
\gamma_{\S_2,\S_1}={\gamma_{\S_1,\S_2}}^{-1}\ ,\qquad
\rho_\S=\lambda_\S\cdot\gamma_{\S,E}\ ,\\[5pt]
\alpha_{\S_3,\S_1,\S_2}\cdot\gamma_{\S_1\otimes\S_2,\S_3}\cdot\alpha_{\S_1,\S_2,\S_3}
= (\gamma_{\S_1,\S_3}\otimes
id_{\S_2})\cdot\alpha_{\S_1,\S_3,\S_2}\cdot(id_{\S_1}\otimes\gamma_{\S_2,\S_3})\
.
\end{array}$$
\end{proof}

One can prove (see \cite{MV04}) that the tensor functor on a basic
cover $\S$
$$
\S\otimes(-)\ :\  {\bf BCov_i}\longrightarrow {\bf BCov_i}
$$
has {\it impredicatively} the left adjoint
$$
(-)\rightarrow \S\ :\ {\bf BCov_i}\longrightarrow {\bf BCov_i}\ 
$$
where $\T\rightarrow \S$ is the \emph{impredicative} basic cover
corresponding to the suplattice of all basic cover maps from $\T$ to
$\S$ ordered pointwise:
$$
r\leq s\, \equiv\, (\forall \, U\releps \P(S)) \, \ r^- U\cov s^- U \ .
$$
In other words, $\S\otimes(-)$ as a functor on ${\bf BCov_i}^{op}$ has a
right adjoint. Therefore the tensor on {\bf BCov$_i$}, which is {\it
impredicatively} the opposite of the category {\bf SupLat}, coincides with the
Galois tensor defined by Joyal-Tierney in \cite{galoisexten} (see
also \cite{Vickers-Johnstone}).

The above definition of $\otimes$ benefits from a
topological intuition. Indeed, the following lemma shows that the basic cover of
$\S \otimes \T$ satisfies an analogue of a key property
of the product of two topological spaces.

\begin{lemma}\label{lemma tensor}
Let $\S$ and $\T$ be two inductively generated basic covers. Then
$$
\infer{(a,b)\cov_{\S\otimes\T}U\times V}{a\cov_\S U & b\cov_\T V}
$$ 
holds for all $a\in S$, $b\in T$,
$U\sub S$ and $V\sub T$.
\end{lemma}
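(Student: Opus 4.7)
The plan is to reduce the two-sided statement to two one-sided lemmas, each proved by induction on the generation of the relevant cover, and then combine them using transitivity. Concretely, let $I,C$ and $J,D$ be axiom-sets that generate $\S$ and $\T$ respectively, so that the tensor $\S\otimes\T$ is generated by the axioms $(a,b)\cov_{\S\otimes\T}C(a,i)\times\{b\}$ and $(a,b)\cov_{\S\otimes\T}\{a\}\times D(b,j)$ as in \eqref{eq. def. tensor product}.

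First I would establish the left-hand auxiliary: for every $b\in T$, $U\sub S$, and $a\in S$,
$$a\cov_\S U\ \Longrightarrow\ (a,b)\cov_{\S\otimes\T}U\times\{b\}.$$
Fix $b$ and $U$, put $P=\{a'\in S\mid (a',b)\cov_{\S\otimes\T}U\times\{b\}\}$, and apply the induction rule (iii) of Section~\ref{bcovi} to the generation of $a\cov_\S U$. The reflexivity clause is immediate: if $a'\releps U$ then $(a',b)\releps U\times\{b\}$, hence $a'\releps P$. For the infinity clause, suppose $i\in I(a')$ and $C(a',i)\sub P$; then $C(a',i)\times\{b\}\cov_{\S\otimes\T}U\times\{b\}$ by the definition of $U\cov V$, and combining this with the generating axiom $(a',b)\cov_{\S\otimes\T}C(a',i)\times\{b\}$ and transitivity of $\cov_{\S\otimes\T}$ gives $a'\releps P$.

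By the symmetric induction on $b\cov_\T V$ (using the axioms of the form $(a,b)\cov_{\S\otimes\T}\{a\}\times D(b,j)$), one obtains the right-hand auxiliary: $b\cov_\T V$ implies $(a,b)\cov_{\S\otimes\T}\{a\}\times V$ for every $a\in S$. To conclude the lemma, assume $a\cov_\S U$ and $b\cov_\T V$. The left auxiliary gives $(a,b)\cov_{\S\otimes\T}U\times\{b\}$. For each $a'\releps U$, the right auxiliary gives $(a',b)\cov_{\S\otimes\T}\{a'\}\times V\sub U\times V$, so $U\times\{b\}\cov_{\S\otimes\T}U\times V$. Transitivity of $\cov_{\S\otimes\T}$ then yields $(a,b)\cov_{\S\otimes\T}U\times V$.

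The only delicate point is the bookkeeping in the induction: in the auxiliary lemmas, $b$ (respectively $a$) and the target subset must be treated as fixed parameters when choosing the predicate $P$, so that the induction rule (iii) applies in the form stated in Section~\ref{bcovi}. Once this is set up, everything is driven by reflexivity, the generating axioms of the tensor and transitivity, with no further obstacle.
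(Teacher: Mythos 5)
Your proof is correct, but it takes a different route from the paper. The paper argues by a single \emph{double} induction on the two derivations $a\cov_\S U$ and $b\cov_\T V$ simultaneously, splitting into four cases according to whether each derivation ends with reflexivity or with an instance of infinity, and closing each case directly with one of the generating axioms~(\ref{eq. def. tensor product}). You instead factor the statement through two one-sided lemmas with a singleton in the other coordinate, namely $a\cov_\S U\Rightarrow(a,b)\cov_{\S\otimes\T}U\times\{b\}$ and $b\cov_\T V\Rightarrow(a,b)\cov_{\S\otimes\T}\{a\}\times V$, each proved by a single application of the induction rule (iii) of Section~\ref{bcovi} with the other coordinate and the target subset held fixed, and then glue them by showing $U\times\{b\}\cov_{\S\otimes\T}U\times V$ elementwise and invoking transitivity. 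Your decomposition has the advantage of using the induction principle exactly in the form stated in the paper (one derivation at a time, with an explicitly chosen predicate $P$), so there is nothing left to justify about a simultaneous induction; the two auxiliary lemmas are also reusable one-sided ``localization'' facts about the tensor. The paper's double induction is more compact and treats the two factors symmetrically in one pass, at the cost of leaving the four-case bookkeeping and the precise form of the inductive hypotheses somewhat implicit. Both arguments rest on the same ingredients --- reflexivity, the generating axioms of $\S\otimes\T$, and transitivity --- so the difference is one of organization rather than substance, and your version is complete as written.
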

\begin{proof}
By double induction on the proofs of $a\cov_\S U$ and $b\cov_\T V$.
We must analyze four different cases: $a\releps U$ and $b\releps V$;
$a\releps U$ and $D(b,j)\cov_\T V$ for some $j\in J(b)$;
$C(a,i)\cov_\S U$ for some $i\in I(a)$ and $b\releps V$;
$C(a,i)\cov_\S U$ for some $i\in I(a)$ and $D(b,j)\cov_\T V$ for
some $j\in J(b)$. All cases are proved similarly. For instance, let
$a\releps U$ and $D(b,j)\cov_\T V$ for some $j\in J(b)$, that is
$b'\cov_\T V$ for all $b'\releps D(b,j)$.
Then by inductive hypothesis we get $(a,b')\cov_{\S\otimes\T}U\times V$ for
all $b'\releps D(b,j)$, that is $\{a\}\times
D(b,j)\cov_{\S\otimes\T}U\times V$. Hence
$(a,b)\cov_{\S\otimes\T}U\times V$ by (\ref{eq. def. tensor product}).
\end{proof}

This lemma can be expressed  as $\A_\S U\times\A_\T
V\cov_{\S\otimes\T}U\times V$ and hence also as the equation
\begin{equation}
\label{eq saturazione prodotto}
\A_\S U\times\A_\T
V=_{\A_{\S\otimes\T}}U\times V
\end{equation}
since $U\times
V\cov_{\S\otimes\T}\A_\S U\times\A_\T V$ holds by reflexivity.

\section{Operations on formal opens} 

A locale is a suplattice in which binary meets distribute over
arbitrary joins. Since our aims include the inductive generation of
locales, we wish to modify the inductive generation of a basic cover $\A$
so that the resulting lattice $Sat(\A)$ 
(recall that by~(\ref{meet in SatA}) it always has  a meet)
satisfies distributivity.
The mere requirement of distributivity of $Sat(\A)$ says nothing on
how to obtain it when $\cov$ is generated inductively. As we will
see, however, it is possible to impose distributivity by adding an
extra primitive operation $\circ$ on subsets of the base $S$ with
certain suitable properties. In fact, using $\circ$ one can impose
some conditions during the generation process which guarantee that
distributivity holds ``at the end'', when the generation of $\cov$
is ``completed''.  

This method extends in a natural way to the generation of quantales. 
Recall that a quantale is a suplattice with an associative binary operation, 
called multiplication, that is distributive over joins~\cite{rosenthal}.
The idea is to make $Sat(\A)$ a quantale $(Sat(\A),\bigvee^\A,\circ^\A)$
where the multiplication  $\circ^\A$
is induced by an operation $\circ$ on subsets of $S$. In this section we see
what conditions on $\circ$ make $\circ^\A$ well-defined, commutative
and associative in $Sat(\A)$. In the next section we will study the case of
distributivity of $\circ^\A$ and later the special case of locales.

We start by specifying how an operation $\circ^\A$ on $Sat(\A)$ is obtained in
terms of a given operation $\circ$ on $\P(S)$. Our heuristic criterion is to
read an element $\A U$ of $Sat(\A)$ as an ideal object which is
approximated by the concrete subset $U$. This view is suggested by
the case in which $\A$ is inductively generated and thus $\A U$ is
only the ``limit'' of the generation process. Then it is natural to
require that the operation $\circ^\A$ on $Sat(\A)$ is approximated
by the operation $\circ$ on $\P(S)$. Thus we put
\begin{equation}
\label{eq. def. circ^A}
\A U\circ^{\A} \A V\quad=\quad\A(U\circ V)
\end{equation} 
which says that applying $\circ$ to approximations
$U$ of $\A U$ and $V$ of $\A V$ produces an approximation $U\circ V$
of $\A U\circ^\A \A V$. This equation is our starting point to find
the right conditions on the operation $\circ$. First of all, in
order to read equation (\ref{eq. def. circ^A}) as the definition of
$\circ^\A$, we must understand what conditions on $\circ$ make
$\circ^\A$ well-defined.

\begin{proposition}\label{primi equivalenti di stability}
For every basic cover $(S,\cov)$ and every binary operation $\circ$
on $\P(S)$, the following are equivalent:
\begin{enumerate}
\item $\circ^\A$ as defined in~(\ref{eq. def. circ^A}) is 
a well-defined operation on 
 $Sat(\A)$, that is: $$(\A U=\A U')\ \land\ (\A V=\A V')\
\Longrightarrow\ \A(U\circ V)=\A (U'\circ V')$$ for all
$U,U',V,V'\sub S$ (in other words, $\circ$ respects $=_\A$);
\item $\A(\A U\circ\A V)=\A(U\circ V)$,\ \ for all\ \ $U,V\sub S$.
\end{enumerate}
\end{proposition}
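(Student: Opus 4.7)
The plan is to prove the two implications separately, both using only the idempotency of the saturation operator $\A$, which is the key fact recorded just after Definition 2.2.

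For the direction (2) $\Rightarrow$ (1), I assume $\A U = \A U'$ and $\A V = \A V'$ and chain equalities: by (2) applied to $(U,V)$ and to $(U',V')$,
$$\A(U\circ V) \;=\; \A(\A U \circ \A V) \;=\; \A(\A U' \circ \A V') \;=\; \A(U' \circ V'),$$
where the middle equality is just substitution using the hypotheses. This direction requires nothing beyond (2).

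For the direction (1) $\Rightarrow$ (2), the idea is to instantiate (1) at $U' := \A U$ and $V' := \A V$. The hypotheses of (1) then become $\A U = \A(\A U)$ and $\A V = \A(\A V)$, which hold precisely because $\A$ is idempotent. Hence (1) yields $\A(U\circ V) = \A(\A U \circ \A V)$, which is exactly (2).

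There is no real obstacle here: the statement is essentially the general fact that an operation on a quotient is well defined iff it is invariant under passage to canonical representatives, specialized to the equivalence $=_\A$ on $\P(S)$ whose canonical representative for the class of $U$ is $\A U$. I would present the proof as the two short computations above, perhaps preceded by the observation that $\A U =_\A U$ for every $U\sub S$, which is the only substantive ingredient used.
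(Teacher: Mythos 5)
Your proof is correct and follows essentially the same route as the paper: the direction (2) $\Rightarrow$ (1) is the same chain $\A(U\circ V)=\A(\A U\circ\A V)=\A(\A U'\circ\A V')=\A(U'\circ V')$, and the direction (1) $\Rightarrow$ (2) is exactly the paper's use of idempotency of $\A$, which you merely make more explicit by naming the instantiation $U':=\A U$, $V':=\A V$. Nothing further is needed.
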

\begin{proof}
(1$\Rightarrow$2) Since $\A$ is idempotent, $\A\A U=\A U$ and $\A\A V=\A V$ hold
and then $\A(\A U\circ\A V)$ = $\A(U\circ V)$, that is, 2.

(2$\Rightarrow$1) Assume $\A U=\A U'$ and $\A V=\A V'$. Then $\A
U\circ\A V$ = $\A U'\circ\A V'$ and hence, a fortiori, $\A(\A
U\circ\A V)$ = $\A(\A U'\circ\A V')$. By 2, this gives $\A(U\circ V)$ =
$\A(U'\circ V')$. So $1$ is proved.
\end{proof}

Assuming~(\ref{eq. def. circ^A}), item $2$ above says that 
\begin{equation}\label{def sbagliata}
\A U\circ^\A\A V\ = \ \A(\A U\circ\A V)
\end{equation}
If one takes this equation as a definition of $\circ^\A$,  one  
immediately obtains a well-defined
operation on $Sat(\A)$  without extra requirements
for $\circ$.
Nevertheless, we have not done like that
since~(\ref{def sbagliata}) does not satisfy our intuition on approximations. 
In fact,  (\ref{def sbagliata}) is of no use when $\A$ is inductively
generated since it produces an approximation of $\A U\circ^\A\A V$
only ``after'' the generation of the ideal objects $\A U$ and $\A V$
is ``completed''.

\begin{lemma}
Let $(S,\cov)$ be a basic cover and let $\circ$ be a binary
operation on $\P(S)$. Then the following are equivalent:
\begin{enumerate}
\item $\matrix{\displaystyle{\frac{U\cov V}{U\circ W \cov V \circ
W} }}$ \enspace and \enspace $\matrix{\displaystyle{\frac{U\cov V}{W\circ U \cov
W\circ V} }}$ \enspace localization; \vspace{7pt}
\item  $\matrix{\displaystyle{
\frac{U_1 \cov V_1 \quad U_2 \cov V_2}{U_1 \circ U_2 \cov V_1 \circ
V_2} }}$ \enspace stability;
\item $\circ^\A$ as defined in~(\ref{eq. def. circ^A}) respects inclusion, that is:
$$(\A U_1\sub\A V_1)\land(\A U_2\sub\A V_2)\Longrightarrow
(\A U_1\circ^\A\A U_2)\sub(\A V_1\circ^\A\A V_2)$$ for all
$U_1,U_2,V_1,V_2\sub S$.
\end{enumerate}
\end{lemma}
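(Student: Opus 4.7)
My plan is to argue cyclically, $(1) \Rightarrow (2) \Rightarrow (3) \Rightarrow (1)$, though in fact $(2) \Leftrightarrow (3)$ is essentially a notational translation. I will use throughout the basic equivalence $U \cov V \iff \A U \sub \A V$, which is immediate from $U \cov V \Leftrightarrow U \sub \A V$ and the fact that $\A$ is a closure operator.

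For $(1) \Rightarrow (2)$, given $U_1 \cov V_1$ and $U_2 \cov V_2$, I apply the first localization rule with third argument $U_2$ to obtain $U_1 \circ U_2 \cov V_1 \circ U_2$, then the second localization rule with first argument $V_1$ to obtain $V_1 \circ U_2 \cov V_1 \circ V_2$, and conclude by transitivity of $\cov$. For $(2) \Rightarrow (1)$, given $U \cov V$, I combine it with the reflexivity instance $W \cov W$ and apply stability in either order to recover each of the two localization rules.

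For $(2) \Leftrightarrow (3)$, I unfold definition~(\ref{eq. def. circ^A}): the conclusion of (3) becomes $\A(U_1 \circ U_2) \sub \A(V_1 \circ V_2)$, i.e. $U_1 \circ U_2 \cov V_1 \circ V_2$, while each hypothesis $\A U_i \sub \A V_i$ rewrites as $U_i \cov V_i$. So (3) is literally (2). The only subtlety is that (3) presupposes that $\circ^\A$ is well-defined on $Sat(\A)$; this is granted under (2) via Proposition~\ref{primi equivalenti di stability}, because if $\A U = \A U'$ and $\A V = \A V'$ then $U \cov U'$, $U' \cov U$, $V \cov V'$, $V' \cov V$ all hold, and stability applied twice gives $\A(U \circ V) = \A(U' \circ V')$.

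I do not anticipate any genuine obstacle: once one moves fluently between the cover notation and the saturation notation, everything is routine. The main conceptual point, such as it is, consists in recognising that stability is simply the ``joint'' two-variable form of the two one-variable localization rules, and that reflexivity of $\cov$ is precisely what lets the two formulations be recovered from each other.
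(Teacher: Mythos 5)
Your proof is correct and follows essentially the same route as the paper: localization composed with transitivity gives stability, stability with $W\cov W$ recovers localization, and $(2)\Leftrightarrow(3)$ is the translation $U\cov V\iff \A U\sub\A V$ together with the definition of $\circ^\A$. Your extra remark that (3) presupposes well-definedness of $\circ^\A$, discharged via Proposition~\ref{primi equivalenti di stability}, is a sensible precision the paper leaves implicit, but it does not change the argument.
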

\begin{proof}
($1.\Leftrightarrow 2.$) Assume localization. If $U_1\cov V_1$ and
$U_2\cov V_2$, then both $U_1\circ U_2\cov V_1\circ U_2$ and
$V_1\circ U_2\cov V_1\circ V_2$ hold; hence $U_1\circ U_2\cov
V_1\circ V_2$ by transitivity. Vice versa, the rules of localization
are particular cases of stability, since  $W\cov W$.

($2.\Leftrightarrow 3.$) 
 Recall that $U\cov V$
iff $\A U\sub\A V$. So $2$ can be rewritten as $\A U_1\sub\A V_1$
$\land$ $\A U_2\sub\A V_2$ $\Rightarrow$ $\A(U_1\circ U_2)$ $\sub$
$\A(V_1\circ V_2)$
that is 3 by definition of $\circ^\A$.
\end{proof}

Item 3, together with~(\ref{eq. def. circ^A}), implies that
$\circ$ respects $=_{\A}$. So each of the three items above is a
sufficient condition for $\circ^{\A}$ to be well-defined.
This is not surprising since they express  monotonicity of $\circ$ 
with respect to the preorder induced by $\cov$ on $\P(S)$, and
$=_\A$ is the equivalence relation associated with it.

Most properties one can require on $\circ^\A$ are induced in a natural
way by corresponding properties linking $\circ$ with the cover. For
instance, $\circ^\A$ is commutative iff $\A(U\circ V)$ = $\A(V\circ
U)$, that is $U\circ V$ $=_\A$ $V\circ U$. Similarly, $\circ^\A$ is
associative iff $\circ$ is associative modulo $=_\A$, that is
$(U\circ V)\circ W$ $=_\A$ $U\circ (V\circ W)$, for all $U,V,W\sub
S$. In this paper, for simplicity's sake, we shall always assume
$\circ^\A$ to be associative and commutative. 

For future reference,
it is convenient to fix a name for a basic cover with an operation
$\circ$ such that $\circ^\A$ is well-defined, monotone, associative
and commutative. Thanks to the previous lemma, the definition can be
reduced to the following form.

\begin{definition}\label{def. BCovwith operation}
We say that $(S,\cov,\circ)$ is a \emph{basic cover with operation}
if $\S=(S,\cov)$ is a basic cover and $\circ$ is a binary operation
on $\P(S)$ which satisfies:
\begin{enumerate}
\vspace{10pt}
\item $\matrix{\displaystyle{
\frac{U_1 \cov V_1\quad U_2 \cov V_2}{U_1\circ U_2 \cov V_1\circ
V_2} }}$ \enspace stability; or, equivalently, localization:
$\matrix{\displaystyle{ \frac{U\cov V}{U\circ W\cov V\circ W} }}$;
\vspace{7pt}
\item $(U\circ V)\circ W$ $\cov$ $U\circ(V\circ W)$ \enspace associativity with respect to
$=_\A$;
\item $U\circ V$ $\cov$ $V\circ U$ \enspace commutativity with respect to
$=_\A$.
\end{enumerate} In this case, the equation $\A U\circ^\A\A V$
$\stackrel{def}{=}$ $\A(U\circ V)$ defines a monotone, associative
and commutative operation on the suplattice $Sat(\A)$.
\end{definition}

Thanks to stability, $\circ^\A$ is a map $Sat(\A)\times
Sat(\A)\rightarrow Sat(\A)$ in the category of partial orders, where
$\times$ is the cartesian product. Items $2$ and $3$ in the
previous definition make $(Sat(\A),\circ^\A)$ a commutative
semigroup in the category of partial orders.

Basic covers with operation provide a
ground framework  for studying the concepts we are
mainly interested in, namely (commutative) quantales and locales.
First, we shall show how to obtain presentations of (commutative)
quantales.

\section{Presenting commutative quantales: convergent covers}
\label{section convergent covers}

>From now on we assume $\S$ = $(S,\cov,\circ)$ to be a basic cover with operation
(in the sense of definition \ref{def. BCovwith operation}). 
In this section we are going to study the case in which the
corresponding 
structure $(Sat(\A),\bigvee^\A,\circ^\A)$ is a commutative quantale
\cite{mulvey,rosenthal}, that is,  when multiplication $\circ^\A$ distributes over arbitrary joins.
This we call a convergent cover. Together with a suitable notion of
morphism, one gets a category which is dual to the category {\bf cQu} of
commutative quantales.  

\begin{definition}
A basic cover with operation $(S,\cov, \circ)$ is called a {\em
convergent cover} if $\circ^\A$ distributes over $\bigvee^\A$, that
is, if $(Sat(\A),\bigvee^\A,\circ^\A)$ is a commutative quantale.
\end{definition}

Our next aim is to obtain some more elementary characterizations of
this notion.

\begin{lemma}\label{lemma existential op}
For every basic cover with operation $(S,\cov,\circ)$, the following
are equivalent:
\begin{enumerate}
\item $\circ^\A$ distributes over $\bigvee^\A$, that is: 
\enspace $\bigvee^\A_{i\in
I}(\A U_i\circ^\A\A V)$ = $(\bigvee^\A_{i\in I}\A U_i)\circ^\A\A V$;
\item $\circ$
distributes over $\bigcup$ modulo $=_\A$, that is: 
\enspace $\bigcup_{i\in
I}(U_i\circ V)$ $=_\A$ $(\bigcup_{i\in I} U_i)\circ V$;
\item $\circ$ is determined by its restriction on
singletons, that is: 
\enspace $U\circ V$ $=_\A$ $\bigcup_{a \releps U,\ b
\releps V} (\{a\}\circ \{b\})$.
\end{enumerate}
\end{lemma}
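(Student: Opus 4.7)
The plan is to chase equivalences by unfolding the operations $\bigvee^\A$ and $\circ^\A$ back into set-theoretic unions and the primitive operation $\circ$ on $\P(S)$. The two key translation formulas are~(\ref{def join SatA}), which yields $\bigvee^\A_{i\in I}\A W_i = \A\bigcup_{i\in I}W_i$, and the defining equation~(\ref{eq. def. circ^A}), $\A U\circ^\A\A V = \A(U\circ V)$. Once both sides of each clause are rewritten via these formulas, the equivalences should fall out with only bookkeeping.

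For (1)$\iff$(2), I would rewrite each side of the equation in~(1) directly. The right-hand side becomes $\A\bigl((\bigcup_i U_i)\circ V\bigr)$, while the left-hand side becomes $\A\bigcup_i\A(U_i\circ V)$, which collapses to $\A\bigcup_i(U_i\circ V)$ by the second equality in~(\ref{def join SatA}). The resulting equation $\A\bigcup_i(U_i\circ V) = \A\bigl((\bigcup_i U_i)\circ V\bigr)$ is precisely~(2). I should remark that~(1) as stated asserts distributivity only in the first argument; since $\circ^\A$ is commutative by Definition~\ref{def. BCovwith operation}, this one-sided version is equivalent to the two-sided one.

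For (2)$\Rightarrow$(3), I would use $U = \bigcup_{a\releps U}\{a\}$ and~(2) to obtain $U\circ V =_\A \bigcup_{a\releps U}(\{a\}\circ V)$. Then for each $a\releps U$, commutativity modulo $=_\A$ combined with~(2) applied on the other side gives $\{a\}\circ V =_\A V\circ\{a\} =_\A \bigcup_{b\releps V}(\{b\}\circ\{a\}) =_\A \bigcup_{b\releps V}(\{a\}\circ\{b\})$. Combining the two yields~(3). Conversely, for (3)$\Rightarrow$(2), I would expand both $(\bigcup_i U_i)\circ V$ and each $U_i\circ V$ using~(3) and observe that the two resulting iterated unions coincide by reindexing on the disjoint union $\bigcup_i U_i$.

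The argument is essentially bookkeeping, so there is no substantial technical obstacle. The only subtle point worth flagging is the systematic use of commutativity modulo $=_\A$ to pass between distribution on the left and right of $\circ$; without it,~(1) as stated would be strictly weaker than the symmetric form needed to extract~(3), and this is the only place where clause~3 of Definition~\ref{def. BCovwith operation} is really invoked.
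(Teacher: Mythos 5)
Your proof is correct and follows essentially the same route as the paper's: both unfold $\circ^\A$ and $\bigvee^\A$ via~(\ref{eq. def. circ^A}) and~(\ref{def join SatA}) to see that (1) is just a rewriting of (2), derive (3) by applying (2) twice after writing $U$ and $V$ as unions of singletons, and verify (3)$\Rightarrow$(2) by reindexing the double union. Your explicit appeal to commutativity modulo $=_\A$ merely makes precise what the paper's phrase ``applying 2 twice'' silently uses to distribute in the second argument, since the hypothesis of a basic cover with operation already guarantees that property.
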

\begin{proof}
By unfolding the definitions of $\circ^\A$ in ~(\ref{eq. def. circ^A}) 
and $\bigvee^\A$ in~(\ref{def join SatA}) one sees
that $2$ is just a rewriting of 1. 
Since $U =\bigcup_{a\releps U}\{a\}$ and $V =\bigcup_{b\releps V}\{b\}$,
$3$ follows by
applying 2 twice. It remains to be checked that 3
implies 2: $\bigcup_{i\in I}(U_i\circ V)$ $=_\A$ (by $3$) $\bigcup_{i\in
I}(\bigcup_{a\releps U_i,\ b\releps V}\{a\}\circ \{b\})$  =
$\bigcup_{i\in I,\ a\releps U_i}\bigcup_{b\releps V}\{a\}\circ \{b\}$ =
$\bigcup_{a\releps\bigcup_{i\in I}U_i,\ b\releps V}\{a\}\circ \{b\}$ $=_\A$
(by 3 again) $(\bigcup_{i\in I} U_i)\circ V$.
\end{proof}

>From now on, we write $a\circ b$  for $\{a\}\circ\{b\}$ and more generally   
$a\circ V$ and $U\circ b$ for
$\{a\}\circ V$ and $U\circ \{b\}$, respectively.

When $\circ$ is determined by its restriction on singletons (item  3 in the lemma),  
stability and localization become
equivalent to their particular cases
\begin{equation}
\label{eq. stability and localization on elements}
\frac{a\cov U}{a\circ V\cov U\circ V}\qquad\quad
\frac{a\cov U}{a\circ b\cov U\circ b}\qquad \quad
\frac{a\cov U \quad b\cov V}{a\circ b\cov U\circ V}
\end{equation} 
The equivalence between localization and the second
rule in (\ref{eq. stability and localization on elements}) will be
crucial for inductive generation. 
In a similar way, associativity and
commutativity become equivalent to $(a\circ b)\circ c\cov
a\circ(b\circ c)$ and to $a\circ b\cov b\circ a$, respectively. So
we have:

\begin{proposition}
Let $(S,\cov)$ be basic cover and let $\circ$ be an operation on
$\P(S)$. Then $(S,\cov,\circ)$ is a convergent cover if and only if
all the following hold:
\begin{enumerate}
\item $U\circ V$ $=_\A$ $\bigcup_{a \releps
U,\ b \releps V} (a\circ b)$
\item $a\cov U$ $\land$ $b\cov V$ $\Longrightarrow$ $a\circ b$
$\cov$ $U\circ V$ (or, equivalently: $a\cov U$ $\Longrightarrow$
$a\circ b$ $\cov$ $U\circ b$)
\item $(a\circ b)\circ c$ $\cov$ $a\circ(b\circ c)$
\item $a\circ b$ $\cov$ $b\circ a$
\end{enumerate} (for all $a,b,c\in S$ and $U,V\sub S$).
\end{proposition}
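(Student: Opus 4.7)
The plan is to use Lemma \ref{lemma existential op} as the central bridge: a convergent cover is by definition a basic cover with operation in which $\circ^\A$ distributes over arbitrary joins, and that lemma identifies this distributivity with condition 1 of the proposition (``$\circ$ is determined by its restriction to singletons''). So the proof essentially assembles the observations already made in the paragraph preceding the statement, where it is remarked that once $\circ$ is determined by singletons, the subset-level axioms of a basic cover with operation collapse to their pointwise analogues. I therefore split into the two directions, treating each of conditions 1--4 in turn.

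For the $(\Rightarrow)$ direction, assume $(S,\cov,\circ)$ is a convergent cover. Then $\circ^\A$ distributes over $\bigvee^\A$, and by Lemma \ref{lemma existential op} this is equivalent to condition 1. Conditions 2, 3, and 4 are instances (by taking singletons) of the subset-level stability, associativity mod $=_\A$, and commutativity mod $=_\A$ granted by Definition \ref{def. BCovwith operation}.

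For the $(\Leftarrow)$ direction, assume conditions 1--4. Since condition 1 is item 3 of Lemma \ref{lemma existential op}, I also have item 2: $\circ$ distributes over arbitrary unions modulo $=_\A$. I would first derive subset-level stability from conditions 1 and 2 as follows: rewrite $U_1\circ U_2 =_\A \bigcup_{a\releps U_1, b\releps U_2}(a\circ b)$ via condition 1; for each such pair, reflexivity and transitivity of $\cov$ give $a\cov V_1$ and $b\cov V_2$, and condition 2 yields $a\circ b\cov V_1\circ V_2$; taking unions gives $U_1\circ U_2\cov V_1\circ V_2$. Stability implies that $\circ$ respects $=_\A$. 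To lift conditions 3 and 4 to subsets, iterate condition 1 together with distributivity of $\circ$ over unions to rewrite both sides in ``normal form'', namely $(U\circ V)\circ W =_\A \bigcup_{a\releps U,\,b\releps V,\,c\releps W}((a\circ b)\circ c)$ and $U\circ(V\circ W) =_\A \bigcup_{a,b,c}(a\circ(b\circ c))$; condition 3 then gives the cover termwise, and commutativity follows analogously from condition 4. At this point $(S,\cov,\circ)$ is a basic cover with operation, and distributivity of $\circ^\A$ over $\bigvee^\A$ comes for free from Lemma \ref{lemma existential op} applied to condition 1.

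The only real work is the bookkeeping in this last step: iterating condition 1 with distributivity of $\circ$ over unions to factor triple products into a union of element-by-element products. Every nontrivial content is already packaged in Lemma \ref{lemma existential op} and in the pointwise axioms themselves, so I expect no genuine obstacle beyond a careful chain of $=_\A$-rewritings.
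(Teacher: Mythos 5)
Your proof is correct and takes essentially the same route as the paper, which presents this proposition as a direct consequence of Lemma \ref{lemma existential op} together with the preceding observation that, once $\circ$ is determined by its restriction to singletons, stability, associativity and commutativity reduce to their pointwise forms; you simply make the $=_\A$-rewritings explicit. (The only cosmetic point is that you quote the implication $3\Rightarrow 2$ of Lemma \ref{lemma existential op} before the ``basic cover with operation'' hypothesis has been verified, but the proof of that implication uses only condition 1 and the fact that $=_\A$ respects unions, so nothing is at stake.)
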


We can characterize convergent covers also as basic covers with
operation plus an extra operation defining implication, as follows.
It is well known that a monotone function on a suplattice, which is
an endofunctor on the corresponding poset category, preserves
arbitrary joins if and only if it admits a right adjoint. This means
that, given a basic cover with operation $(S,\cov,\circ)$, the
structure $(Sat(\A),\bigvee^A,\circ^\A)$ is a quantale, that is
$\circ^\A$ distributes over $\bigvee^\A$, if and only if every map
$\_\_\_\ \circ^\A\A U$ has a right adjoint $\A U\to_\A\_\_\_\ $. In
other words, a basic cover with operation $(S,\cov,\circ)$ is a
convergent cover iff there exists a binary operation $\to_\A$ on
$Sat(\A)$ such that:
$$\A W \circ^\A \A U \sub \A V \quad \Longleftrightarrow \quad \A W
 \sub \A U \to_\A \A V$$ for all $U,V,W\sub S$.
When it exists, $\to_\A$ satisfies:
\begin{equation}
\A U\to_\A\A V\quad=\quad{\bigvee}^\A\big\{\A W\ |\ W\sub S\
\textrm{and}\ \A W\circ^\A\A U\sub\A V\big\}.
\end{equation} 
By unfolding definitions, the right member becomes
$\A(\bigcup\{W\sub S$ $|$ $W\circ U\cov V\})$.

Now we give a predicative definition of $\to_A$. So assume $\to_\A$
to exist; thus $\circ^\A$ distributes over $\bigvee^\A$. 
By lemma~\ref{lemma existential op}, $\circ$ is determined 
by its restriction on singletons.
So $W\circ U\cov V$ becomes equivalent to $(\forall
a\releps W)(a\circ U\cov V)$; hence $\A\bigcup\{W\sub S$ $|$ $W\circ
U\cov V\}$ can be rewritten as $\A\{a\in S$ $|$ $a\circ U\cov V\}$.
Summing up, when $\to_\A$ exists, it is
\begin{equation}
\A U\to_\A\A V\quad=\quad\A\{a\in S\ |\ a\circ U\cov V\}
\end{equation} 
(for all $U,V\sub S$).
This suggests to define an operation
 \begin{equation}\label{def. implication}
 U\to_\S V\quad\stackrel{def}{=}\quad \{a\in S\ |\ a\circ U\cov V\}
 \end{equation}
on \emph{arbitrary} subsets, for every basic cover with operation $\S$ =
$(S,\cov,\circ)$, 
and then to put
 \begin{equation}\label{def. implication SatA}
 \A U\to_\A\A V\quad\stackrel{def}{=}\quad \A(U\to_\S V),
 \end{equation}
which is an analogue of equation (\ref{eq. def. circ^A}).
Then one can show by localization that $\to_\A$ is  well-defined  on $Sat(\A)$.
\begin{proposition}
For every basic cover with operation $\S$ = $(S,\cov,\circ)$, 
 the following are equivalent:

\begin{tabular}{rlp{3pt}rl}
\\
a. & $\S$ is a convergent cover  & & b. & $\to_\A$ is right adjoint
of $\circ^\A$ 
\\
\\ c. & \multicolumn{4}{l}{
$W\circ U\cov V$
$\Longleftrightarrow$ $W\cov U\to_\S V$ (for all $U,V,W\sub S$) \enspace
 with $\to_\S$ defined in (12). }\\
\end{tabular}
\end{proposition}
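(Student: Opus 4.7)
The plan is to establish the cycle (c) $\Rightarrow$ (b) $\Rightarrow$ (a) $\Rightarrow$ (c), which keeps each step short. The proposition essentially translates between an external adjunction condition phrased with subsets (c), the internal adjunction between endomaps of $Sat(\A)$ (b), and the distributivity that defines a commutative quantale (a).

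The arrow (b) $\Rightarrow$ (a) is immediate from the suplattice fact recalled just before the statement: a monotone endofunction on a suplattice preserves arbitrary joins iff it has a right adjoint. If $\to_\A$ is right adjoint to $\circ^\A$ (in the first argument), then $\circ^\A$ preserves $\bigvee^\A$, so $\S$ is a convergent cover. For (c) $\Rightarrow$ (b), I would first use (c) to check that $\A V \mapsto \A(U \to_\S V)$ depends only on $\A V$: if $\A V = \A V'$ then (c) gives $W\cov U\to_\S V \Leftrightarrow W\circ U\cov V \Leftrightarrow W\circ U\cov V' \Leftrightarrow W\cov U\to_\S V'$, and taking $W$ equal to each side yields $U\to_\S V =_\A U\to_\S V'$; the analogous property in $U$ follows already from stability. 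The adjunction of (b) is then a direct restatement of (c), via $W\circ U\cov V \Leftrightarrow \A W\circ^\A \A U \sub \A V$ and $W\cov U\to_\S V \Leftrightarrow \A W \sub \A U\to_\A \A V$.

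The main content is (a) $\Rightarrow$ (c). The easy direction, $W\circ U\cov V \Rightarrow W\cov U\to_\S V$, uses only the basic cover with operation axioms: for each $a\releps W$, reflexivity and localization give $a\circ U\cov W\circ U$, and transitivity with the hypothesis yields $a\circ U\cov V$, so $a\releps U\to_\S V$ by definition, whence $W\sub U\to_\S V$. The converse is where convergence is genuinely used. From $W\cov U\to_\S V$ together with $U\cov U$, stability yields $W\circ U\cov (U\to_\S V)\circ U$, reducing the goal to $(U\to_\S V)\circ U\cov V$. By Lemma~\ref{lemma existential op}, convergence provides the singleton representation $(U\to_\S V)\circ U =_\A \bigcup_{a\releps U\to_\S V,\ b\releps U}(a\circ b)$, and the claim follows pointwise: for each such pair $(a,b)$, the defining property of $U\to_\S V$ gives $a\circ U\cov V$, while $b\cov U$ with localization gives $a\circ b\cov a\circ U$, hence $a\circ b\cov V$ by transitivity. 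The main obstacle is precisely this reduction to singletons: without convergence the product $(U\to_\S V)\circ U$ cannot be dismantled componentwise, and this converse implication would not go through.
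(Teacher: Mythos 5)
Your proof is correct, but it is organized differently from the paper's. The paper proves this proposition simply by appeal to the discussion preceding it: there, $(a)\Leftrightarrow(b)$ is obtained from the general order-theoretic fact that a monotone endomap of a suplattice preserves arbitrary joins iff it has a right adjoint, and the identification of that right adjoint with $\A U\to_\A\A V=\A(U\to_\S V)$ is derived by rewriting $\A\bigcup\{W\sub S\ |\ W\circ U\cov V\}$ as $\A\{a\in S\ |\ a\circ U\cov V\}$ using the singleton characterization of convergence; $(c)$ is then just the unfolding of $(b)$ on representatives. You instead close the cycle $(c)\Rightarrow(b)\Rightarrow(a)\Rightarrow(c)$ and, crucially, prove the substantive implication $(a)\Rightarrow(c)$ by an explicit verification: the unit-type direction $W\circ U\cov V\Rightarrow W\sub U\to_\S V$ uses only localization and transitivity, and the counit-type inequality $(U\to_\S V)\circ U\cov V$ is checked pointwise after dismantling the product via item 3 of lemma~\ref{lemma existential op}, which is exactly where convergence enters. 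This buys you a self-contained, concretely checkable argument that never manipulates the collection-indexed join $\bigvee^\A\{\A W\ |\ \A W\circ^\A\A U\sub\A V\}$, whereas the paper's route is shorter because it delegates the adjoint-existence direction to the abstract suplattice fact; the mathematical content (stability/localization plus the reduction of $\circ$ to singletons) is the same in both. Two small remarks: your well-definedness check of $\to_\A$ in the second argument does not actually need $(c)$, since $a\circ U\cov V$ iff $a\circ U\cov V'$ follows directly from $\A V=\A V'$ by transitivity, and independence in the first argument is immediate from localization, so $U\to_\S V$ depends only on $\A U$ and $\A V$ outright; and in $(b)\Rightarrow(a)$ you should (as you implicitly do) invoke commutativity of $\circ^\A$, available by definition of a basic cover with operation, to pass from join-preservation in one argument to full distributivity.
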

\begin{proof}
By the discussion above.
\end{proof}

To represent unital commutative quantales \cite{rosenthal}, we also need the
following:
\begin{definition} 
We say that a convergent cover $(S,\cov,\circ)$ is \emph{unital} if
there exists a subset $I\sub S$ such that $a\circ I$ $=_\A$ $a$
$=_\A$ $I\circ a$ for all $a\in S$.
\end{definition}

\subsection{Morphisms between convergent covers}

Let $\S$ = $(S,\cov_\S,\circ_\S)$ and $\T$ = $(T,\cov_\T,\circ_\T)$
be two convergent covers. A morphism between the corresponding
quantales $h$ : $Sat(\A_{\T})$
$\longrightarrow$ $Sat(\A_{\S})$ is a
map which preserves joins and multiplication.
As in proposition~\ref{prop. BCov dual SupLat}, $h$ corresponds to a basic cover map
$r$ from $(S,\cov_\S)$ to $(T,\cov_\T)$. Then the further condition
on $h$ says that $r^-(U\circ_\T V)$ $=_{\A_{\S}}$ $r^-U\circ_\S r^- V$.
This equation is equivalent to its version on singletons since $r^-$ is determined by 
its restriction on singletons. So we put:

\begin{definition}
Let $\S =(S,\cov_\S, \circ_\S)$ and $\T =(T,\cov_{\T},\circ_{\T})$
be two convergent covers. A relation $r$ between $S$ and $T$ is a
\emph{convergent cover map} if:
\begin{list}{-}{ }
\item $r$ is a \emph{basic cover map} (that is, a morphism between basic covers);
\item $r$ is \emph{convergent}, that is
\begin{equation}\label{def. rel. cont. convergente}
\quad r^-(b_1\circ_{\T} b_2)\quad =_\A\quad (r^-b_1)\circ_\S
(r^-b_2)\end{equation} for all $b_1,b_2\in T$.
\end{list} Two convergent cover maps are equal if they are equal as basic cover maps.
\end{definition}
We then specialize the notion of convergent cover map in the
presence of units:
\begin{definition}
Let $\S =(S,\cov_\S, \circ_\S, I_{\S})$ and $\T =(T,\cov_{\T},\circ_{\T}, I_{\T})$
 be two
unital convergent covers. A relation $r$ between $S$ and $T$ is a
\emph{unital convergent cover map} if:
\begin{list}{-}{ }
\item $r$ is a \emph{convergent cover map};
\item $r$ preserves the $\circ$-units, that is
\begin{equation}\label{def. rel. unital}
\quad r^-(I_{\T})\quad =_\A\quad I_{\S}\ .\end{equation}
\end{list} Two unital convergent cover maps are equal if they are equal 
as basic cover maps.
\end{definition}

Given the discussion above, it is straightforward to show that:

\begin{proposition}\label{prop. CCovdual cQu}
Convergent covers with convergent cover maps form a subcategory of
{\bf BCov}, called {\bf CBCov}. The category {\bf CBCov} is dual to
the category {\bf cQu} of commutative quantales.
\end{proposition}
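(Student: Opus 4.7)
The proof splits into two parts: confirming that $\mathbf{CBCov}$ is closed under identities and composition as a subcategory of $\mathbf{BCov}$, and then building the dual equivalence with $\mathbf{cQu}$. Identities are immediate since $id^- b=\{b\}$ makes (\ref{def. rel. cont. convergente}) trivial. For the composite $s\cdot r$ of two convergent cover maps, the key preliminary is to promote the singleton-level equation (\ref{def. rel. cont. convergente}) to subsets: combining item~3 of Lemma~\ref{lemma existential op} (expressing $V_1\circ_\T V_2$ as $\bigcup_{b_1\releps V_1,b_2\releps V_2}b_1\circ_\T b_2$ modulo $=_{\A_\T}$) with the fact that $r^-$ commutes with unions and, as a basic cover map, respects $=_\A$, one obtains
$$r^-(V_1\circ_\T V_2)\,=_{\A_\S}\,(r^-V_1)\circ_\S(r^-V_2)$$
for all $V_1,V_2\sub T$. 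Applied in succession to $s$ and then to $r$ (noting that $(s\cdot r)^-c=r^-s^-c$), this yields the convergence of $s\cdot r$.

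For the duality, I would define $\Phi:\mathbf{CBCov}^{op}\to\mathbf{cQu}$ on objects by $\S\mapsto Sat(\A_\S)$, which is a commutative quantale exactly by the definition of convergent cover, and on morphisms by sending a convergent cover map $r:\S\to\T$ to the map $h_r:Sat(\A_\T)\to Sat(\A_\S)$, $\A_\T V\mapsto\A_\S r^-V$. Proposition~\ref{prop. BCov dual SupLat} already guarantees that $h_r$ is a well-defined sup-preserving map and that $\Phi$ respects identities and composition. The new verification is that $h_r$ preserves multiplication, which follows from the subset-level convergence via
$$h_r(\A_\T U\circ^{\A_\T}\A_\T V)=\A_\S r^-(U\circ_\T V)=\A_\S(r^-U\circ_\S r^-V)=h_r(\A_\T U)\circ^{\A_\S}h_r(\A_\T V).$$

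It remains to check that $\Phi$ is an equivalence (of $\mathbf{CBCov}^{op}$ with $\mathbf{cQu}$). Faithfulness holds by the declared equality on convergent cover maps. For fullness, the discussion preceding the definition of a convergent cover map already shows that every sup-preserving map between $Sat(\A_\T)$ and $Sat(\A_\S)$ is represented by a unique (up to equality) basic cover map $r$ defined by $a\,r\,b\Leftrightarrow a\releps h(\A_\T b)$; the extra multiplicativity of $h$ translates, by reading (\ref{eq. def. circ^A}) on both sides, into exactly the convergence condition (\ref{def. rel. cont. convergente}). Essential surjectivity is the impredicative step: given a commutative quantale $Q$, take $S=Q$, declare $a\cov U$ iff $a\leq\bigvee U$ and $U\circ V=\{u\cdot v\mid u\releps U,v\releps V\}$; one checks that $(S,\cov,\circ)$ is a convergent cover whose $Sat(\A)$ is isomorphic to $Q$ as a quantale. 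The only conceptual obstacle is the promotion of (\ref{def. rel. cont. convergente}) from singletons to arbitrary subsets; everything else is bookkeeping layered on top of the $\mathbf{BCov}/\mathbf{SupLat}$ duality.
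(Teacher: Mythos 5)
Your proposal is correct and takes essentially the same route as the paper, which states the proposition as a direct consequence of the preceding discussion (the singleton-to-subset equivalence of the convergence condition) together with the {\bf BCov}/{\bf SupLat} duality of proposition~\ref{prop. BCov dual SupLat}, plus the standard impredicative representation of a quantale on its own underlying collection for essential surjectivity. The only point you leave tacit is that reassembling $\bigcup_{b_1\releps V_1,\,b_2\releps V_2}(r^-b_1)\circ_\S(r^-b_2)$ into $(r^-V_1)\circ_\S(r^-V_2)$ also uses item~3 of lemma~\ref{lemma existential op} on the $\S$ side (available because $\S$ is itself a convergent cover), not only on the $\T$ side.
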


\begin{proposition}\label{prop. CCovdual cQu unital}
Unital convergent covers with unital convergent cover maps form a
subcategory of {\bf CBCov}, called {\bf uCBCov}. The category {\bf
uCBCov} is dual to the category {\bf ucQu} of unital commutative
quantales and commutative quantale maps preserving  units. 
\end{proposition}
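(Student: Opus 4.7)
The plan is to build on Proposition~\ref{prop. CCovdual cQu}, which already provides the duality between $\mathbf{CBCov}$ and $\mathbf{cQu}$, and simply check that the added unit conditions correspond on both sides.

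First, I would verify that $\mathbf{uCBCov}$ is a bona fide subcategory of $\mathbf{CBCov}$. Identities are clearly unital, since for the identity relation $id$ on $T$, $id^-(I_\T)=I_\T$. For composition, suppose $r\colon \S\to\T$ and $s\colon \T\to\U$ are unital convergent cover maps. Using $(s\circ r)^-(W)=r^-(s^-(W))$ (a direct unfolding of relational composition) and the fact, established in the discussion after Definition~\ref{def.basic cover map}, that a basic cover map $r$ respects $=_{\A_\S}$ on arbitrary subsets, we get
$$
(s\circ r)^-(I_\U)\;=\;r^-(s^-(I_\U))\;=_{\A_\S}\;r^-(I_\T)\;=_{\A_\S}\;I_\S,
$$
so $s\circ r$ is again unital. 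Thus identities and composition stay within $\mathbf{uCBCov}$.

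Second, for the duality, I would restrict the contravariant equivalence $\S\mapsto Sat(\A_\S)$ of Proposition~\ref{prop. CCovdual cQu} and check that it matches unit data on each side. Given a unital convergent cover $(S,\cov,\circ,I)$, the element $\A I$ is a two-sided unit for $\circ^\A$ on $Sat(\A)$: by the defining equation~(\ref{eq. def. circ^A}) and the hypothesis $a\circ I=_\A a=_\A I\circ a$, we have $\A a\circ^\A \A I=\A(a\circ I)=\A a$, and similarly on the left; extending from singleton generators to arbitrary $\A U$ by distributivity of $\circ^\A$ over $\bigvee^\A$ shows $\A I$ is a unit for the whole quantale. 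Conversely, the unit $e$ of a unital commutative quantale $Q$ is a formal open, hence of the form $\A I$ for some subset $I$ of the generating set $S$, and one verifies directly that $(S,\cov,\circ,I)$ is a unital convergent cover.

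Third, I would check the morphism correspondence. A convergent cover map $r\colon \S\to\T$ corresponds, under the duality of Proposition~\ref{prop. CCovdual cQu}, to the quantale map $h\colon Sat(\A_\T)\to Sat(\A_\S)$ defined by $h(\A_\T V)=\A_\S\, r^-V$. Then $h$ preserves units if and only if $h(\A_\T I_\T)=\A_\S I_\S$, i.e.\ $\A_\S r^-(I_\T)=\A_\S I_\S$, which is precisely the condition $r^-(I_\T)=_{\A_\S} I_\S$ defining unital convergent cover maps. Since equality of maps in both categories is tracked by $=_\A$, the duality restricts cleanly, giving $\mathbf{uCBCov}\simeq \mathbf{ucQu}^{op}$.

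There is no real obstacle here: the only point requiring a moment's care is that basic cover maps respect $=_\A$ (needed in the composition argument and in passing the unit condition between the two sides), but this is already part of the content of Definition~\ref{def.basic cover map} and the discussion immediately following it. Everything else is a direct specialisation of Proposition~\ref{prop. CCovdual cQu}.
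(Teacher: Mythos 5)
Your proposal is correct and follows exactly the route the paper intends: the paper states this proposition without proof (``given the discussion above, it is straightforward''), and your verification --- restricting the duality of Proposition~\ref{prop. CCovdual cQu}, checking that identities and composites remain unital via the fact that basic cover maps respect $=_\A$, and matching the unit $\A I$ with the quantale unit on objects and the condition $r^-(I_\T)=_{\A_\S}I_\S$ with unit preservation on morphisms --- is precisely that routine check, carried out correctly.
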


The treatment of quantales in \cite{bs} is based on a binary
operation $\bullet$ on the base $S$ of a basic cover. This operation
can be seen as a very particular operation $\circ$ on subsets for
which all $a\circ b$ are singletons. See section \ref{section other
definitions} below for further details.

A quite intuitive fact we shall need later is:

\begin{lemma}\label{lemma circ saturo}
Let $\S_1$ = $(S,\cov,\circ_1)$ and $\S_2$ = $(S,\cov,\circ_2)$ be
two convergent covers (sharing the same underlying basic cover). If
$\circ_1=_\A\circ_2$, that is $U\circ_1 V$ $=_\A$ $U\circ_2 V$ for
all $U,V\sub S$, then $\S_1$ and $\S_2$ are isomorphic (in {\bf
CBCov}).
\end{lemma}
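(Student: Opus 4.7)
The plan is to exhibit the identity relation $\mathrm{id}_S \sub S \times S$ as the isomorphism in both directions between $\S_1$ and $\S_2$ in $\mathbf{CBCov}$. Since $\S_1$ and $\S_2$ share the same underlying basic cover $(S,\cov)$, the relation $\mathrm{id}_S$ trivially respects $\cov$ in either direction: for any $b\in S$ and $V\sub S$, $\mathrm{id}_S^-(b)=\{b\}$ and $\mathrm{id}_S^-(V)=V$, so the implication $b\cov V\Rightarrow\mathrm{id}_S^-b\cov\mathrm{id}_S^-V$ holds by reflexivity and transitivity. Thus $\mathrm{id}_S$ is a basic cover map from $\S_1$ to $\S_2$, and also from $\S_2$ to $\S_1$.

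The only non-trivial point is the convergence condition of Definition of convergent cover map. Viewed as a morphism $\S_1\to\S_2$, we need to check
$$\mathrm{id}_S^-(b_1\circ_2 b_2)\ =_\A\ (\mathrm{id}_S^- b_1)\circ_1(\mathrm{id}_S^- b_2)$$
for all $b_1,b_2\in S$. Unfolding the definition of $\mathrm{id}_S^-$, the left-hand side equals $b_1\circ_2 b_2$ and the right-hand side equals $b_1\circ_1 b_2$, so the required equality is exactly a particular instance of the hypothesis $U\circ_1 V=_\A U\circ_2 V$ applied to the singletons $U=\{b_1\}$, $V=\{b_2\}$. The same argument, with the roles of $\circ_1$ and $\circ_2$ swapped, shows $\mathrm{id}_S$ is also a convergent cover map $\S_2\to\S_1$.

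Finally, the composition of $\mathrm{id}_S$ with itself as relations is $\mathrm{id}_S$, which represents the identity morphism in $\mathbf{BCov}$ (by Proposition on $\mathbf{BCov}$ being dual to $\mathbf{SupLat}$) and a fortiori in $\mathbf{CBCov}$. Hence the two copies of $\mathrm{id}_S$ are mutually inverse, giving the desired isomorphism $\S_1\cong\S_2$ in $\mathbf{CBCov}$.

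There is no real obstacle: the proof is essentially an unfolding of definitions. The only thing worth emphasizing is that we can restrict verification of the convergence clause to singletons because, by Lemma on existential op item 3, a convergent cover's multiplication is determined by its restriction to singletons; thus checking $U\circ_1 V=_\A U\circ_2 V$ on singletons is enough, and this is precisely what the identity relation requires.
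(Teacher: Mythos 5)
Your proof is correct and follows exactly the same route as the paper, which simply observes that the isomorphism is given by the identity relation on $S$; your write-up just fills in the routine verifications (basic cover map, convergence via the hypothesis on singletons, mutual inverses) that the paper leaves implicit.
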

\begin{proof}
The isomorphism is given by the identity relation on $S$.
\end{proof}

\subsection{Inductive generation of convergent covers} 
\label{indqu}

We now wish to extend the method of inductive generation from the
case of basic covers to that of convergent covers. In order to
generate a convergent cover (or, equivalently, a commutative
quantale), it is quite natural to start from the following data: a
set $S$ (that is, a set of generators of the corresponding
suplattice); an axiom-set $I,C$ (encoding axioms of the form $a\cov
U$); a partial description of an operation on subsets given by its
restriction to elements, namely a map $\delta:S\times
S\longrightarrow\P(S)$ (we use a new symbol to underline the fact
that, in the generation of a convergent cover, it is sufficient to
define $\circ$ on singletons).

The first step is to extend $\delta$ to an operation $\circ$ on
$\P(S)$. This is simple: we put $$U\circ
V\quad=\quad\bigcup_{a\releps U,\ b\releps V}\delta(a,b)\ .$$
Recalling that $a\circ b$ stands for the subset $\{a\}\circ\{b\}$,
one sees that $a\circ b$ = $\delta(a,b)$ and hence $U\circ V$ =
$\bigcup_{a\releps U,\ b\releps V}a\circ b$, so that $\circ$ is
determined by its restriction on singletons.

Next we add some conditions making the operation $\circ$ commutative and
associative modulo $=_\A$. 
In order to apply the general scheme of inductive generation of basic covers, 
we are going to express such conditions as instances of the infinity rule.
By using transitivity, one can see that
commutativity is expressed by any one of the following equivalent
conditions:
$$
b\circ c\cov c\circ b\qquad\quad
\frac{c\circ b\cov U}{b\circ c\cov U}\qquad\quad \frac{a\releps
b\circ c \qquad c\circ b\cov U}{a\cov U}\quad .
$$
We choose the last one
since it  becomes an instance of the infinity rule provided that the axiom schema
``$a\cov c\circ b$ whenever $a\releps b\circ c$'' is encoded in the
axiom-set. To this aim, it is sufficient first to enlarge the index
set $I(a)$ by adding all pairs $(b,c)$ such that $a\releps b\circ c$
and then to define the corresponding cover of $a$ to be $c\circ b$.

Associativity is treated by following the same idea. One can see
that
$$(b\circ c)\circ d\cov b\circ(c\circ d)\qquad\quad
\frac{b\circ(c\circ d)\cov U}{(b\circ c)\circ d\cov U}\qquad\quad
\frac{a\releps (b\circ c)\circ d \qquad b\circ(c\circ d)\cov
U}{a\cov U}$$ are all equivalent. The last one becomes an instance
of the infinity rule for a suitable extension of the axiom-set
$I,C$.

We call $J,D$ the axiom-set extending $I,C$ in the way just
described. By the above equivalences, the basic cover generated by
$J,D$ is the least basic cover which makes $\circ$ commutative and 
associative modulo $=_\A$.

Thus it only remains to take care of localization.
It is convenient to express it in the equivalent form given by the
second rule in (\ref{eq. stability and localization on elements}).
In fact, as we now see, this allows us to show that localization can
equivalently be expressed by a set-indexed family of conditions. A
straightforward argument shows that the rules
$$\infer{b\circ c\cov V\circ c}{b\cov V}\qquad
\infer{b\circ c\cov U}{b\cov V & V\circ c\cov U}\qquad\infer{a\cov
U}{a\releps b\circ c & b\cov V & V\circ c\cov U}$$ are all
equivalent. The last one looks more suitable for an inductive
generation, since it resembles the infinity rule. However, it is not
acceptable from a constructive point of view since the parameter $V$
ranges over a collection, namely $\P(S)$. Then the idea is to
restrict the cover $V$ of $b$ to be one of those given by the
axioms, namely $D(b,j)$ for $j\in J(b)$. This leads to the following
rule:
$$
\infer[\enspace locax]{a\cov U}{a\releps b\circ c & j\in J(b) & D(b,j)\circ c\cov
U}
$$ 
(\emph{localization on axioms}). This rule becomes an instance
of the infinity rule for a suitable extension $J',D'$ of the
axiom-set $J,D$. In fact, it is sufficient to enlarge the index set
$J(a)$ by adding triples $(b,j,c)$ such that $a\releps b\circ c$ and
$j\in J(b)$ and then to define the corresponding cover of $a$ to be
$D(b,j)\circ c$. The rule locax is equivalently
expressed by 
$$
\infer{b\circ c\cov U}{j\in J(b) & D(b,j)\circ c\cov U}
$$ 
which explains its name: every instance of the infinity rule
for $J,D$ is ``localized'' to the basic neighbourhood $c$. Using the
fact that $\circ$ is determined by its restriction on singletons,
one can easily check that locax holds also for subsets, that is
$$
\infer[\enspace locax \ on \ subsets]{b\circ V\cov U}{j\in J(b) & D(b,j)\circ V\cov U}
$$ 
for every $V\sub S$.

It is worth noting that the rule locax cannot be limited to the
axioms of $I,C$. If that were the case, in fact, it would become
impossible to prove the localized versions of commutativity and
associativity. For instance, to prove $(a\circ b)\circ c$ $\cov$
$(b\circ a)\circ c$ it is necessary to apply locax with respect to
$J,D$: for $x\releps a\circ b$ one has $(a,b)\in J(x)$ and $D(x,(a,b)) = b \circ a$
and hence from $(b\circ a)\circ c \cov (b\circ a)\circ c$ by locax one gets
$x \circ c \cov (b\circ a)\circ c$.
An alternative approach would be to consider
locax for $I,C$ and then add commutativity and associativity already
in localized form.

Summing up, starting from any axiom-set 
$I,C$ on a set $S$ and any map 
$\delta$ $:$ $S\times S$ $\longrightarrow$ $\P(S)$,  we first extended  $\delta$ to a map 
$\circ:$ $\P(S)\times \P(S)$ $\longrightarrow$ $\P(S)$
by putting
$U\circ V$ = $\bigcup\{\delta(a,b)$ $|$ $a\releps U$, $b\releps V\}$ for all $U,V\sub S$.
Then we called  $J,D$ the  axiom-set  obtained from $I,C$ by adding axioms
so that
$$
b\circ c  \cov c\circ b~\qquad \mbox{ and } \qquad (b\circ c)\circ d\cov b\circ(c\circ d)
$$
become derivable.  
Finally, we let
$J',D'$ be the axiom-set extending   $J,D$ with axioms making
$$
b\circ c  \cov D(b,j)\circ c
$$ 
derivable for all $b,c\in S$ and $j\in J(b)$.
Then we can prove:

\begin{proposition}\label{prop. ind. gen. of convergent
covers} Let $I,C$ be an axiom-set on a set $S$ and let $\delta$ $:$
$S\times S$ $\longrightarrow$ $\P(S)$ be an arbitrary map. 
Define $\circ$ and $J',D'$ as above and  let $\cov$ be 
the basic cover generated by  $J',D'$. 
Then $(S,\cov,\circ)$ is a convergent cover
in which $\cov$ contains $\cov_{I,C}$ (that is,
$a\cov_{I,C}U$ $\Rightarrow$ $a\cov U$ for all $a\in S$ and $U\sub S$)
and $\circ$ extends $\delta$ (that is, $a\circ b$ $=_{\A}$
$\delta(a,b)$ for all $a,b\in S$).

Moreover,  if $(S,\cov',\circ')$  is any convergent cover in which $\cov'$ contains 
$\cov_{I,C}$  and $\circ'$ extends $\delta$, then $\cov'$ contains $\cov$ and 
$\circ' =_{\A'} \circ$.
 \end{proposition}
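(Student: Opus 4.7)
My plan is first to dispatch the easy clauses and then to isolate stability as the real work. The containment $\cov_{I,C}\sub \cov$ and the equation $a\circ b=\delta(a,b)$ are immediate from the construction: $J',D'$ extends $I,C$, and the definition $U\circ V=\bigcup_{a\releps U,\,b\releps V}\delta(a,b)$ gives $\{a\}\circ\{b\}=\delta(a,b)$ exactly. The same definition shows that $\circ$ is determined by its restriction to singletons, so item 1 of the elementwise characterization of convergent covers holds by construction. Commutativity $a\circ b\cov b\circ a$ and associativity $(a\circ b)\circ c\cov a\circ(b\circ c)$ on singletons follow by applying the infinity rule to the commutativity and associativity axioms added in passing from $I,C$ to $J,D$.

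The substantive task is stability, which I would prove in the general form $a\cov U\Rightarrow a\circ V\cov U\circ V$. Before the induction, I would establish three lemmas that require \emph{no} stability: commutativity on subsets $U\circ V\cov V\circ U$, obtained by distributing the union and invoking the singleton commutativity axiom pointwise; associativity on subsets $(U\circ V)\circ W\cov U\circ (V\circ W)$ by the analogous argument, together with its reverse direction, which is then derived by interleaving five applications of commutativity and forward associativity on subsets; and the subset form of locax for $J$-axioms, $b\circ V\cov D(b,j)\circ V$ for $j\in J(b)$, obtained by distributing over $V$ and applying the singleton locax axiom pointwise.

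With these in hand, stability is proved by induction on the derivation of $a\cov U$ in the cover generated by $J',D'$. Reflexivity is trivial. In the infinity step, with $j'\in J'(a)$ and $D'(a,j')\cov U$, the inductive hypothesis yields $D'(a,j')\circ V\cov U\circ V$, so it suffices to show $a\circ V\cov D'(a,j')\circ V$. If $j'\in J(a)$ this is the subset form of locax. The main obstacle is the remaining case in which $j'=(e,j_0,f)\in J'(a)\setminus J(a)$ is itself a locax-axiom (so $a\releps e\circ f$, $j_0\in J(e)$ and $D'(a,j')=D(e,j_0)\circ f$); here I would use the bootstrapping chain
\[
a\circ V\;\sub\;(e\circ f)\circ V\;\cov\;e\circ(f\circ V)\;\cov\;D(e,j_0)\circ(f\circ V)\;\cov\;(D(e,j_0)\circ f)\circ V,
\]
where the inclusion uses $a\releps e\circ f$, the first cover is associativity on subsets, the second is the subset form of locax for $j_0\in J(e)$, and the last is reverse associativity on subsets. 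The whole argument turns on the crucial point that commutativity and associativity on subsets can be established before stability is available.

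For the minimality clause, let $(S,\cov',\circ')$ be any convergent cover with $\cov_{I,C}\sub \cov'$ and $\circ'$ extending $\delta$. Since both $\circ$ and $\circ'$ are determined by their restriction to singletons, and the singleton restrictions agree with $\delta$ modulo $=_{\A'}$, we get $\circ=_{\A'}\circ'$. It then suffices to verify that $\cov'$ satisfies every axiom scheme of $J',D'$: the $I,C$-axioms by hypothesis, the commutativity and associativity axioms because $(Sat(\A'),\circ')$ is a commutative semigroup, and the locax-axioms because $\cov'$ inherits stability from being a convergent cover. Minimality of the inductively generated cover then gives $\cov\sub \cov'$.
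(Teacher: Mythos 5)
Your proof is correct and follows essentially the same route as the paper's: reduce everything to localization, prove it by induction on the derivation in the cover generated by $J',D'$, treat the locax-axiom case by the same associativity--locax--associativity chain (you merely run it forward from $a\circ V$ instead of backward from $U\circ d$), and obtain minimality from $\circ'=_{\A'}\circ$ together with the fact that $\cov'$ validates all the $J',D'$-axioms. The one difference is that you make explicit a point the paper leaves implicit, namely that commutativity, both directions of associativity, and locax on subsets are derivable pointwise from the axioms \emph{before} stability is available (your five-step derivation of reverse associativity from commutativity and forward associativity), which is a useful clarification rather than a deviation.
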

\begin{proof}
The operation $\circ$ trivially extends $\delta$ and satisfies
$U\circ V$ $=_\A$ $\bigcup_{a \releps
U,\ b \releps V} (a\circ b)$.
By the definition of $J',D'$, the basic cover $\cov$ contains $\cov_{I,C}$ and 
associativity and commutativity hold. 
So, to show that  $(S,\cov,\circ)$ is a convergent cover, only localization 
remains to be proved. We
prove $a\cov U$ $\Longrightarrow$ $a\circ d\cov U\circ d$ by
induction on the proof of $a\cov U$.

If $a\cov U$ is obtained by reflexivity from $a\releps U$, then by
definition of $\circ$ we have $a\circ d$ $\sub$ $U\circ d$ and hence
$a\circ d\cov U\circ d$ by reflexivity.

If $a\cov U$ is obtained by infinity, we consider two cases
according to whether the axiom used in the rule belongs to $J,D$ or
not. In the former case, $a\cov U$ is obtained from the assumptions:
$j\in J(a)$ and $D(a,j)\cov U$. By the inductive hypothesis applied
to $D(a,j)\cov U$, we get $D(a,j)\circ d\cov U\circ d$
(pedantically, for each $b\releps D(a,j)$, we use the inductive
hypothesis $b\cov U$ $\Rightarrow$ $b\circ d\cov U\circ d$). This,
together with $j\in J(a)$, implies $a\circ d\cov U\circ d$ by locax
on subsets (that is, by a suitable instance of the infinity rule for
$J',D'$).

We now analyze the case in which $a\cov U$ is obtained by infinity
from an axiom of $J',D'$ that does not belong to $J,D$. In other
words, $a\cov U$ is derived from the assumptions $a\releps b\circ
c$, $j\in J(b)$ and $D(b,j)\circ c\cov U$ and hence the infinity
rule corresponding to this case is precisely locax. The inductive
hypothesis on the assumption $D(b,j)\circ c\cov U$ gives
$(D(b,j)\circ c)\circ d\cov U\circ d$ and hence $D(b,j)\circ (c\circ
d)\cov U\circ d$ by associativity. This, together with $j\in J(b)$,
yields $b\circ (c\circ d)\cov U\circ d$ by locax on subsets and
hence $(b\circ c)\circ d\cov U\circ d$ by associativity again. So
$a\circ d\cov U\circ d$ as wished, because $a\releps b\circ c$ and
$\circ$ is determined by its restriction on singletons.

Finally, let $(S,\cov',\circ')$  be a convergent cover in which $\cov'$ contains 
$\cov_{I,C}$  and such that $a \circ' b =_{\A'}  \delta(a,b)$.
Then $U\circ' V =_{\A'} \bigcup_{a\releps U, b\releps V} \delta(a,b) = 
U\circ V$. So also $(S,\cov',\circ)$ is a convergent cover 
(in fact, it is isomorphic to $(S,\cov',\circ')$ by lemma~\ref{lemma circ saturo})
and hence $\cov'$ fulfills all the axioms in $J',D'$. So $\cov'$ contains $\cov$.
\end{proof}

\begin{remark} In order to generate a unital convergent cover,
it is sufficient to start with an additional piece of data, namely a
subset $I$, and then impose  extra axioms about unit $a\cov a\circ I$ and $ a\circ I\cov a$.
Note that, in the presence of commutativity and associativity, it is
irrelevant whether the unit axioms added before or after
localizing the axioms.
\end{remark}

\begin{definition}
A convergent cover $(S,\cov,\circ)$ is \emph{inductively generated}
if it is constructed as in proposition \ref{prop. ind. gen. of
convergent covers} for some axiom-set $I,C$ over $S$ and some map
$\delta$ $:$ $S\times S$ $\longrightarrow$ $\ P(S)$.

We call {\bf CBCov$_i$} the full subcategory of {\bf CBCov} whose
objects are inductively generated.
Similarly,  {\bf uCBCov$_i$} is the full subcategory of inductively 
generated objects in {\bf uCBCov}.
\end{definition}

We end with a lemma characterizing convergent cover maps between
inductively generated unital convergent covers:
\begin{lemma}\label{lemma continuity for generated convergent covers}
Let $\S$ = $(S,\cov_{\S},\circ_{\S}, I_{\S})$ and $\T$ =
$(T,\cov_{\T}, \circ_{\T}, I_{\T})$ be two unital convergent covers. 
Assume that $\T$ is inductively generated by means of an
axiom-set $I,C$ and a map $\delta:T\times T\longrightarrow\P(T)$
according to proposition \ref{prop. ind. gen. of convergent covers}.
Then   a relation $r$ between $S$ and $T$  is a unital
convergent cover map from $\S$ to $\T$ if and only if the following
hold:
\begin{enumerate}
\item $r^-a\cov_{\S} r^-C(a,i)$ for all $a\in T$ and all $i\in I(a)$;
\item $r^-\delta(a,b)\ =_\A\ r^-a\circ_{\S}r^-b$ for all $a,b\in T$;
\item $r^-I_{\T}\ =_\A\ I_{\S}$.
\end{enumerate}
\end{lemma}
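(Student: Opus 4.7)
The proof splits into the two implications, with the reverse one carrying all the real work.

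\emph{Forward direction.} Assume $r$ is a unital convergent cover map. Item 3 is literally (\ref{def. rel. unital}). Item 1 follows because $a\cov_\T C(a,i)$ holds by infinity for each $i\in I(a)$, and $r$, being a basic cover map, respects this. For item 2, recall that in the inductive generation of $\T$ one has $a\circ_\T b = \delta(a,b)$ as subsets, so the convergence equation (\ref{def. rel. cont. convergente}) rewrites $r^-\delta(a,b)$ as $r^-a\circ_\S r^-b$ modulo $=_{\A_\S}$.

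\emph{Reverse direction.} Assume items 1--3. Again using $a\circ_\T b=\delta(a,b)$, items 2 and 3 immediately give (\ref{def. rel. cont. convergente}) and (\ref{def. rel. unital}), so it remains to show that $r$ is a basic cover map from $\S$ to $\T$. The plan is to invoke Lemma~\ref{lemma continuity for generated covers} for the axiom-set $J',D'$ that inductively generates $\cov_\T$, and thus verify $r^-b\cov_\S r^-D'(b,j')$ for each $j'\in J'(b)$ by a case split on the flavour of axiom.

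The axioms of $J',D'$ come in four kinds: (i) those inherited from $I,C$; (ii) commutativity axioms ($b\releps\delta(p,q)$, with target $\delta(q,p)$); (iii) associativity axioms (analogous); and (iv) locax axioms ($b\releps\delta(p,c)$ with $j\in J(p)$, target $D(p,j)\circ_\T c$). Case (i) is precisely item 1. Cases (ii) and (iii) are routine chases: from $r^-b\sub r^-\delta(p,q)$ one rewrites via item 2 to $r^-p\circ_\S r^-q$, invokes commutativity (or associativity) of $\circ_\S$ modulo $=_{\A_\S}$, and folds back via item 2 to the required target. Case (iv) is the crux: from cases (i)--(iii) one already has $r^-p\cov_\S r^-D(p,j)$; stability of $\circ_\S$ then gives $r^-p\circ_\S r^-c\cov_\S r^-D(p,j)\circ_\S r^-c$; finally, item 2 together with distributivity of $\circ_\S$ over unions (Lemma~\ref{lemma existential op}) identifies $r^-D(p,j)\circ_\S r^-c$ with $r^-(D(p,j)\circ_\T c)$ modulo $=_{\A_\S}$. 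Chaining with $r^-b\sub r^-\delta(p,c)=_{\A_\S} r^-p\circ_\S r^-c$ closes case (iv).

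The main obstacle is this locax case, since it sits at the top of the axiom hierarchy and must combine three ingredients at once: the previously verified cases (i)--(iii) to handle the sub-axiom at $p$; stability of $\circ_\S$; and distributivity of $\circ_\S$ over joins, which is precisely the extra content of $\S$ being a convergent cover rather than just a basic cover with operation. The remaining cases amount to bookkeeping with condition 2 and the algebraic axioms of $\circ_\S$.
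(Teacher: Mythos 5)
Your proposal is correct and follows essentially the same route as the paper: reduce to Lemma~\ref{lemma continuity for generated covers} applied to the axiom-set $J',D'$, use item 1 for the original axioms, and derive the commutativity, associativity and locax axioms from item 2 together with the corresponding properties (stability, distributivity over unions) of $\circ_\S$. Your treatment of the locax case, verifying $r^-b\cov_\S r^-(D(p,j)\circ_\T c)$ for arbitrary $j\in J(p)$ by first settling the $J,D$ axioms via cases (i)--(iii), is if anything slightly more complete than the paper's check of ``a particular instance'' with $C(b,i)$.
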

\begin{proof} 
By the definition of $\circ_{\T}$ 
(in proposition \ref{prop. ind. gen. of convergent covers}), item $2$ is equivalent to
$r^-(a\circ_{\T} b)$ $=_\A$ $(r^-a)\circ_{\S}(r^-b)$. So we must
check only that condition $1$ is tantamount to $r$ being a basic
cover map. 
By lemma~\ref{lemma continuity for generated covers},
it is sufficient to show that $1$ holds   
also for the other axioms of  $J',D'$,
namely commutativity, associativity and localization.
In fact, condition $1$ for these extra axioms follows from $2$ and the
corresponding properties of $\S$. 
We check this only in two cases.

First, we see that
$r^-a\cov_\S r^-(c\circ_\T b)$ whenever $a\releps b\circ_\T c$,
namely that condition $1$ for one of the commutativity axioms
of $J,D$. By $2$ and commutativity of $\circ_\S$  we have
$r^-(c\circ_\T b)$ $=_\A$ $(r^-c)\circ_\S(r^-b)$ $=_\A$
$(r^-b)\circ_\S(r^-c)$ $=_\A$ $r^-(b\circ_\T c)$. So the claim is
equivalent to $r^-a\cov r^-(b\circ_\T c)$. But this follows from
$r^-a\sub r^-(b\circ_\T c)$ which in  turn is a consequence of
the assumption $a\releps b\circ_\T c$.

Second, we
consider a particular instance of locax. Assume $a\releps
b\circ_{\T} c$ and $C(b,i)\circ_{\T} c\cov_{\T} U$ for some $b,c\in
T$ and some $i\in I(b)$. Then $r^-(C(b,i)\circ_{\T} c)\cov_{\S}
r^-U$ by inductive hypothesis and hence $(r^-C(b,i))\circ_\S
(r^-c)\cov_{\S} r^-U$ by $2$. From $1$ by localization in
$\S$ one can deduce $(r^-b)\circ_{\S} (r^-c)\cov_{\S}
(r^-C(b,i))\circ_{\S}  (r^-c)$. So by transitivity $(r^-b)\circ_{\S} (r^-c)\cov_{\S}
r^-U$ and hence $r^-(b\circ_{\T} c)\cov_{\S}  r^- U$ by 2. Since
$a\releps b\circ_{\T} c$ by assumption, one can conclude
$r^-a\cov_{\S}  r^- U$.
\end{proof}

\subsection{Categorical reading of convergence}

In this section we wish to show that the category {\bf CBCov$_i$} is
equivalent to the category of commutative co-semigroups in {\bf
BCov$_i$}. This result expresses in a constructive way the fact that
{\bf cQu} (commutative quantales) is equivalent to the category of
commutative semigroups in {\bf SupLat} (see \cite{galoisexten}).
Once this is achieved, it is straightforward to obtain a description of
commutative co-monoids in {\bf BCov$_i$} simply by 
considering unital convergent covers. Similarly,
one can easily extends these results to the non-commutative case.

\begin{definition}
A commutative co-semigroup in the category  {\bf BCov$_i$} is an
inductively generated basic cover $\S$ together with a map $\mu:\S
\rightarrow \S \otimes \S$ in {\bf BCov$_i$} such that the following
diagrams commute.
$$\xymatrix{ & \S \ar[dl]_{\mu} \ar[dr]^{\mu} & \\
\S\otimes\S \ar[d]_{id_{\S}\otimes\mu} & & \S\otimes\S \ar[d]^{\mu\otimes id_{\S}} \\
\S\otimes(\S\otimes\S) \ar[rr]_{\alpha_{\S,\S,\S}} & &
(\S\otimes\S)\otimes\S}
\qquad\xymatrix{ & \S \ar[dl]_{\mu} \ar[dr]^{\mu} & \\
\S\otimes\S \ar[rr]_{\gamma_{\S,\S}} & & \S\otimes\S}$$ A morphism
between two commutative co-semigroups $(\S,\mu)$ and $(\S',\mu')$ is
a basic cover map $r:\S\longrightarrow\S'$ such that the following
diagram commutes.
$$\xymatrix{ \S \ar[r]^{r} \ar[d]_{\mu} & \S' \ar[d]^{\mu'}  \\
\S\otimes\S \ar[r]_{r\otimes r} & \S'\otimes\S'}$$
\end{definition}

\begin{lemma}
Every  inductively generated convergent cover
 can be seen as a commutative co-semigroup in {\bf BCov$_i$}.

Conversely every commutative co-semigroup in {\bf BCov$_i$} determines a
convergent basic cover.
\end{lemma}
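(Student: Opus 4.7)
The proof splits into two directions, both of which reduce to unfolding $r^-$ through the tensor product and the coherence isomorphisms.

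For the forward direction, starting from an inductively generated convergent cover $\S=(S,\cov,\circ)$, the plan is to define the comultiplication $\mu:\S\to\S\otimes\S$ as the relation determined on elements by ${\mu}^-(b,c)\ =_{\A_\S}\ b\circ c$. First I check that $\mu$ is a basic cover map by means of Lemma~\ref{lemma continuity for generated covers}. The axioms generating $\S\otimes\S$ are of the form $(b,c)\cov_{\S\otimes\S}D'(b,j)\times\{c\}$ and $(b,c)\cov_{\S\otimes\S}\{b\}\times D'(c,k)$, where $J',D'$ is the axiom-set inductively generating $\S$. Since ${\mu}^-(D'(b,j)\times\{c\})=_{\A_\S}D'(b,j)\circ c$, the required condition $b\circ c\cov_\S D'(b,j)\circ c$ follows from $b\cov_\S D'(b,j)$ by localization, which holds in every convergent cover. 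Next I verify the two diagrams. Using the identities $(r_1\otimes r_2)^-(x,y)\ =_\A\ r_1^-x\times r_2^-y$, ${\alpha}^-((a,b),c)\ =_\A\ (a,(b,c))$ and ${\gamma}^-(y,x)\ =_\A\ (x,y)$ together with the fact that $\circ$ is determined by its restriction on singletons, I compute
$$
((\mu\otimes id_\S)\circ\mu)^-((a,b),c)\ =_\A\ (a\circ b)\circ c,
$$
$$
(\alpha_{\S,\S,\S}\circ(id_\S\otimes\mu)\circ\mu)^-((a,b),c)\ =_\A\ a\circ(b\circ c),
$$
$$
(\gamma_{\S,\S}\circ\mu)^-(b,c)\ =_\A\ c\circ b,\qquad {\mu}^-(b,c)\ =_\A\ b\circ c.
$$
Commutativity of the two diagrams is then precisely associativity and commutativity of $\circ$ modulo $=_\A$, which hold by hypothesis.

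For the converse direction, given a commutative co-semigroup $(\S,\mu)$ in {\bf BCov$_i$}, I define $\delta(a,b):=\mu^-(a,b)$ and extend it to an operation $\circ$ on $\P(S)$ by $U\circ V:=\bigcup_{a\releps U,\ b\releps V}\delta(a,b)$, which makes $\circ$ determined by singletons. To see $(S,\cov,\circ)$ is a convergent cover I invoke the characterization in the Proposition just before Section~\ref{section convergent covers}. Localization $a\cov U\Rightarrow a\circ b\cov U\circ b$ reduces to $\mu^-(a,b)\cov_\S\mu^-(U\times\{b\})$, which follows because $\mu$ respects covers and $(a,b)\cov_{\S\otimes\S}U\times\{b\}$ holds by Lemma~\ref{lemma tensor} applied to $a\cov U$ and $b\cov b$. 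Associativity and commutativity of $\circ$ modulo $=_\A$ follow by running the calculations of the forward direction in reverse: applying the coassociativity diagram to $((a,b),c)$ and the cocommutativity one to $(b,c)$ and reading off $\mu^-$ on each composite.

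The main obstacle is not conceptual but notational: the two directions are logically inverse to each other once one is willing to go through the routine unfoldings of $r^-$ along the tensor product and along the coherence isomorphisms. The only nontrivial point is to check that $\mu$ in the forward direction is well defined as a basic cover map, for which one must recall that the axiom-set generating $\S\otimes\S$ uses the full axiom-set $J',D'$ of $\S$ (not just the original $I,C$), so that localization in the convergent cover $\S$ is exactly what is needed.
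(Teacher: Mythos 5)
Your proposal is correct and follows essentially the same route as the paper: $\mu$ is defined by ${\mu}^-(a,b)=a\circ b$ and checked to be a basic cover map via Lemma~\ref{lemma continuity for generated covers} together with localization, the co-semigroup diagrams are unfolded on singletons to read off associativity and commutativity of $\circ$ modulo $=_\A$, and conversely $\circ$ is recovered as $\mu^-(a,b)$ with Lemma~\ref{lemma tensor} (the paper uses it in the equivalent form (\ref{eq saturazione prodotto})) supplying stability/well-definedness. The only cosmetic difference is that you define $U\circ V$ without the outer saturation and verify the elementary characterization on singletons, whereas the paper sets $U\circ_\mu V=\A\mu^-(U\times V)$ and argues for subsets; the content is the same.
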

\begin{proof}
Given a convergent cover $\S\, =\, (S, \cov, \circ)$, let us define
$$
\mu_\circ:\S\longrightarrow  \S\otimes \S\qquad \mbox{ as }\qquad c\,\mu_\circ (a,b)
\,\equiv \, c\releps a\circ b
$$ 
for all $c,a,b\in S$.
That is, we put ${\mu_\circ}^- (a,b)$ =
$a\circ b$. In order to prove that $\mu_\circ$ is a basic cover map,
by lemma~\ref{lemma continuity for generated covers} 
it is sufficient to check that ${\mu_\circ}^-(a,b)\cov {\mu_\circ}^-
D((a,b),j)$ holds for every axiom $(a,b)\cov D((a,b),j)$ of $\S\otimes
\S$. By symmetry, we can consider only the case $D((a,b),j)$ =
$C(a,i)\times\{b\}$ for some axiom $a\cov C(a,i)$ of $\S$. 
>From $a\cov C(a,i)$  by localization one has $a\circ b\cov C(a,i)\circ b$ and
so:
$$
{\mu_\circ}^- (a,b)= a\circ b\,\cov\, C(a,i)\circ b=_\A \bigcup_{c\releps C(a,i)}c\circ
b\ .
$$ 
Since $c\circ b$ = ${\mu_\circ}^-(c,b)$ and since ${\mu_\circ}^-$
distributes over unions, one gets 
$$
\bigcup_{c\releps
C(a,i)}c\circ b= {\mu_\circ}^-(\bigcup_{c\releps C(a,i)}\{(c,b)\})=
{\mu_\circ}^-(\, C(a,i)\times \{b\}\, ).
$$
So we can conclude that ${\mu_\circ}^-(a,b)\cov {\mu_\circ}^-
D((a,b),j)$.

Commutativity and associativity of $((S,\cov),\mu_\circ)$ follow from
the fact that $\circ$ is commutative and associative modulo $=_\A$.
For instance, the equality
$\gamma_{\S,\S}\cdot\mu_{\circ}=_{\mathbf{BCov}_i}\mu_{\circ}$ means
 that ${\mu_\circ}^- {\gamma_{\S,\S}}^-(a,b)$ $=_\A$
${\mu_\circ}^-(a,b)$ for all $(a,b)\in S\times S$; by unfolding
definitions and since $\mu_\circ$ respects covers, this
becomes ${\mu_\circ}^-(b,a)$ $=_\A$ $a\circ b$, that is, $b\circ a$
$=_\A$ $a\circ b$.

Conversely, given a commutative co-semigroup with $\mu:\
\S\longrightarrow  \S\otimes \S$, we put
$$U\circ_\mu V\, =\, \A\mu^-(U\times V)$$ for all $U,V\sub S$. Note
that this definition respects the equality of morphisms (definition
\ref{def.basic cover map}).

By (\ref{eq saturazione prodotto}), 
$\A U\times\A V$ $=_{\A_{\S\otimes\S}}$ $U\times V$ holds
and hence $\mu^-(\A U\times\A V)$
$=_\A$ $\mu^-(U\times V)$ because $\mu$ respects covers. By the
definition of $\circ_\mu$, this means that $\A U\circ_\mu \A V$
$=_\A$ $U\circ_\mu V$, so that $\circ_\mu$ is well-defined
(proposition \ref{primi equivalenti di stability}). The operation
$\circ_\mu$ is determined by its restriction on singletons:
$$
U\circ_\mu V=_\A\mu^-(\, U\times V\, )=
\bigcup_{u\releps U,v\releps V}\mu^-(u,v)=_\A \bigcup_{u\releps
U,v\releps V}u\circ_\mu v\ .
$$ 
Finally, $\circ_\mu$ is associative
and commutative modulo $=_\A$ because  so is the co-semigroup. For
instance, by the equation
$\alpha_{\S,\S,\S}\cdot (\mathsf{id}_\S\otimes\mu)\cdot \mu$ =
$(\mu\otimes\mathsf{id}_\S)\cdot \mu$ one has
$$
\mu^-(\mathsf{id}_\S\otimes\mu)^-\alpha_{\S,\S,\S}^-((U\times V)\times W)
\quad =_\A\quad \mu^-(\mu\otimes\mathsf{id}_\S)^-((U\times V)\times W)
$$ 
for all $U,V,W\sub S$. By the definition of
$\alpha_{\S,\S,\S}$ and of the tensor of two morphisms, this gives
$\mu^-(U\times \mu^-(V\times W))$ $=_\A$ $\mu^-(\mu^-(U\times V)\times
W))$. By the definition of $\circ_\mu$ this is precisely
$U\circ_\mu (V\circ_\mu W)$ $=_\A$ $(U\circ_\mu V)\circ_\mu W$.
\end{proof}

\begin{proposition}
The category {\bf CBCov$_i$} of inductively generated convergent
covers is equivalent to the category of commutative co-semigroups
in {\bf BCov$_i$}.
\end{proposition}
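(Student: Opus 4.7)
The plan is to lift the object-level correspondence of the previous lemma to a full equivalence of categories. Define a functor $F: \mathbf{CBCov}_i \to \mathsf{CoSgp}(\mathbf{BCov}_i)$ sending a convergent cover $\S = (S, \cov, \circ)$ to the co-semigroup $((S, \cov), \mu_\circ)$ with $\mu_\circ^-(a, b) = a \circ b$, and acting on morphisms as the identity (viewing any convergent cover map as a basic cover map). In the reverse direction, define $G$ by sending a co-semigroup $((S, \cov), \mu)$ to $(S, \cov, \circ_\mu)$ with $U \circ_\mu V = \A \mu^-(U \times V)$, again identically on morphisms. The previous lemma already shows that $F$ and $G$ are well-defined on objects; what remains is to verify functoriality on morphisms and that the two composites are naturally isomorphic to the identity functors.

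For functoriality, I verify that a relation $r$ between the underlying sets is a convergent cover map from $\S$ to $\T$ iff it is a morphism of the associated co-semigroups $F\S \to F\T$. Applying $(-)^-$ to the co-semigroup square $\mu_{\circ_\T} \cdot r = (r \otimes r) \cdot \mu_{\circ_\S}$ and evaluating on $(b_1, b_2) \in T \times T$ yields $r^-(b_1 \circ_\T b_2) =_{\A_\S} \mu_{\circ_\S}^-((r \otimes r)^-(b_1, b_2))$. By the definition of the tensor of two morphisms together with equation~(\ref{eq saturazione prodotto}), $(r \otimes r)^-(b_1, b_2) =_{\A_{\S \otimes \S}} (r^- b_1) \times (r^- b_2)$; by the definition of $\mu_{\circ_\S}$ and distribution of $(-)^-$ over unions, $\mu_{\circ_\S}^-((r^- b_1) \times (r^- b_2)) = \bigcup_{a_1 \releps r^- b_1,\, a_2 \releps r^- b_2} a_1 \circ_\S a_2 =_{\A_\S} (r^- b_1) \circ_\S (r^- b_2)$, where the last step uses that $\circ_\S$ is determined by its restriction on singletons. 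Thus the co-semigroup square commutes exactly when $r^-(b_1 \circ_\T b_2) =_{\A_\S} (r^- b_1) \circ_\S (r^- b_2)$, which is the defining condition~(\ref{def. rel. cont. convergente}) of a convergent cover map. Identities and composition are preserved trivially because both $F$ and $G$ act identically on underlying relations.

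It remains to exhibit natural isomorphisms $\mathsf{id} \cong GF$ and $FG \cong \mathsf{id}$. For $\S = (S, \cov, \circ)$ one has $GF(\S) = (S, \cov, \circ_{\mu_\circ})$ with $U \circ_{\mu_\circ} V = \A \bigcup_{a \releps U, b \releps V} a \circ b =_\A U \circ V$, the last equality by Lemma~\ref{lemma existential op}.3; hence by Lemma~\ref{lemma circ saturo} the identity relation on $S$ is an isomorphism $\S \to GF(\S)$ in $\mathbf{CBCov}_i$. For $((S,\cov), \mu)$ the composite $FG$ returns $((S, \cov), \mu_{\circ_\mu})$ with $\mu_{\circ_\mu}^-(a, b) = a \circ_\mu b =_{\A_\S} \mu^-(a, b)$, so $\mu_{\circ_\mu}$ and $\mu$ coincide as basic cover maps. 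Naturality of both families is automatic because the components are identities on relations. The main subtlety I expect is keeping the directions of arrows straight and systematically invoking Lemma~\ref{lemma tensor} to commute saturation with cartesian product inside the tensor cover; once this bookkeeping is set up, the computations themselves are routine.
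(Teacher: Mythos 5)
Your proposal is correct and follows essentially the same route as the paper's own proof: it uses the preceding lemma for the object correspondence, Lemma~\ref{lemma circ saturo} together with $U\circ_{\mu_\circ}V=_\A U\circ V$ and $\mu_{\circ_\mu}=_{\mathbf{BCov}_i}\mu$ for the two composites, and unfolds the co-semigroup square via $(r\otimes r)^-(b_1,b_2)=_{\A_{\S\otimes\S}}(r^-b_1)\times(r^-b_2)$ to identify co-semigroup morphisms with convergent cover maps. The only cosmetic differences are that you package the correspondence explicitly as a pair of functors with natural isomorphisms, and that the step $\mu_{\circ_\S}^-((r\otimes r)^-(b_1,b_2))=_{\A_\S}\mu_{\circ_\S}^-((r^-b_1)\times(r^-b_2))$ is justified by $\mu_{\circ_\S}$ respecting covers rather than by equation~(\ref{eq saturazione prodotto}) alone.
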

\begin{proof} Thanks to the previous lemma, to each object
$(S,\cov,\circ)$ in {\bf CBCov$_i$} we associate the commutative
co-semigroup $((S,\cov),\mu_\circ)$. Conversely, to every
commutative co-semigroup $((S,\cov),\mu)$ we associate
$(S,\cov,\circ_\mu)$. By lemma \ref{lemma circ saturo},
$(S,\cov,\circ)$ is isomorphic to $(S,\cov,\circ_{\mu_\circ})$
because $U\circ_{\mu_\circ}V$ = $\A{\mu_\circ}^-(U\times V)$ =
$\A\bigcup_{a\releps U,\ b\releps V}{\mu_\circ}^-(a,b)$ =
$\A\bigcup_{a\releps U,\ b\releps V}a\circ b$ = $\A(U\circ V)$.
Moreover, $((S,\cov),\mu)$ coincides with
$((S,\cov),\mu_{\circ_\mu})$ because ${\mu_{\circ_\mu}}^-(a,b)$ =
$a\circ_\mu b$ = $\A\mu^-(a,b)$, for all $a,b\in S$ and so
$\mu_{\circ_\mu}=_{\mathbf{BCov}_i}\mu$.

Morphisms between co-semigroups correspond to
convergent cover maps. In fact, a basic cover map $r:
\S\longrightarrow \S'$ is a co-semigroup map from $(\S,\mu)$ to
$(\S',\mu')$ if and only if $\mu'\cdot r=_{\mathbf{BCov}_i}(r\otimes r)\cdot \mu$ 
iff $r^-{\mu_{\S'}}^-(a,b)$ $=_\A$ ${\mu_{\S}}^-(r\otimes
r)^-(a,b)$ for all $a,b\in S'$. This means precisely that
$r^-(a\circ_{\mu'} b)$ $=_\A$ $r^-a\circ_\mu r^-b$, that is $r$ is a
convergent cover map from $(S,\cov,\circ_\mu)$ to
$(S',\cov',\circ_{\mu'})$. Vice versa, $r$ : $(S,\cov,\circ)$
$\longrightarrow$ $(S',\cov',\circ')$ is a convergent cover map iff
$r^-(a\circ' b)$ $=_\A$ $r^-a\circ r^-b$, that is,
$r^-{\mu_{\circ'}}^-(a,b)$ $=_\A$ ${\mu_{\circ}}^-(r\otimes r)^-(a,b)$.
This means that $\mu_{\circ'}\cdot r=_{\mathbf{BCov}_i}(r\otimes
r)\cdot\mu_\circ$, that is $r$ is a morphism of co-semigroups from
$((S,\cov),\mu_\circ)$ to $((S',\cov'),\mu_{\circ'})$.
\end{proof}

The restriction to inductive generated structures in the
previous statement is needed only to be sure that $\otimes$ exists
predicatively. In an impredicative framework, the above result
states that the whole category {\bf CBCov} is equivalent to the category of
commutative co-semigroups in {\bf BCov}. By passing to the
opposite categories, one gets that (commutative) quantales are
equivalent to (commutative) semigroups over suplattices.

\begin{definition}
A commutative co-monoid in the category  {\bf BCov$_i$} is a
commutative co-semigroup $(\S,\mu)$ in {\bf BCov$_i$} together with
a map $\eta:\S \rightarrow E$  such that the following diagram
commutes.
$$\xymatrix{ & \S \ar[dl]_{{\lambda_\S}^{-1}} \ar[d]^{\mu} \ar[dr]^{{\rho_{\S}}^{-1}} & \\
E\otimes\S & \S\otimes\S \ar[l]^{\eta\otimes id_\S}
\ar[r]_{id_\S\otimes\eta} & \S\otimes E }$$
\end{definition}

Analogously to the previous result, one can show that:

\begin{corollary}
The category $\mathbf{uCBCov}_i$ of inductively generated unital
convergent covers is equivalent to the category of commutative
co-monoids in $\mathbf{BCov}_i$.
\end{corollary}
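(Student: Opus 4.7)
The plan is to enhance the co-semigroup equivalence of the preceding proposition by pairing a counit $\eta:\S\to E$ in a commutative co-monoid with a unit subset $I_\S\sub S$ in a unital convergent cover. Given $(S,\cov,\circ,I_\S)\in\mathbf{uCBCov}_i$, first apply the previous proposition to extract the commutative co-semigroup $((S,\cov),\mu_\circ)$; then define $\eta_{I_\S}:\S\to E$ by $a\,\eta_{I_\S}\,\ast$ iff $a\releps I_\S$, so that $\eta_{I_\S}^-(\ast)=I_\S$. Since $E$ is generated by no axioms, $\eta_{I_\S}$ is automatically a basic cover map by Lemma~\ref{lemma continuity for generated covers}. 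Conversely, starting from a commutative co-monoid $((S,\cov),\mu,\eta)$ we extract $\circ_\mu$ as in the previous proposition and set $I:=\eta^-(\ast)$.

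The key step is to verify that the counit diagram translates exactly into the unit axiom $a\circ I_\S =_\A a =_\A I_\S\circ a$. Using that $\lambda_\S^{-1}$ and $\rho_\S^{-1}$ are characterized (up to $=_\A$) by $(\lambda_\S^{-1})^-(\ast,a)=_\A \{a\}$ and $(\rho_\S^{-1})^-(a,\ast)=_\A \{a\}$, the same kind of computation used in the proof of the co-semigroup equivalence yields
$$
\big((\eta_{I_\S}\otimes\mathsf{id}_\S)\cdot\mu_\circ\big)^-(\ast,a)\ =_\A\ \mu_\circ^-(I_\S\times\{a\})\ =_\A\ \bigcup_{c\releps I_\S}(c\circ a)\ =_\A\ I_\S\circ a\,,
$$
and symmetrically $((\mathsf{id}_\S\otimes\eta_{I_\S})\cdot\mu_\circ)^-(a,\ast) =_\A a\circ I_\S$. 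Hence the counit diagram commutes iff $I_\S\circ a =_\A a =_\A a\circ I_\S$ for every $a\in S$, which is exactly the unit axiom. Together with Lemma~\ref{lemma circ saturo}, this shows that the two assignments $(\circ,I_\S)\mapsto(\mu_\circ,\eta_{I_\S})$ and $(\mu,\eta)\mapsto(\circ_\mu,\eta^-(\ast))$ are mutually inverse up to isomorphism in $\mathbf{uCBCov}_i$.

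For morphisms, a basic cover map $r:\S\to\S'$ is compatible with the counits, $\eta'\cdot r =_{\mathbf{BCov}_i}\eta$, iff $r^-I_{\S'} =_\A I_\S$, which is precisely the clause added in the definition of a unital convergent cover map. Combined with the co-semigroup half already handled by the previous proposition, this delivers the desired equivalence of categories. The main obstacle is the explicit identification of the counit diagram with the two unit axioms, which hinges on the action of $\lambda_\S^{-1}$ and $\rho_\S^{-1}$ on singletons, exactly as the preceding result relied on the analogous description of $\alpha$ and $\gamma$.
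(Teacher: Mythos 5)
Your proposal is correct and follows exactly the route the paper intends: the paper gives no separate proof for this corollary, asserting only that it is shown ``analogously'' to the co-semigroup equivalence, and your argument is precisely that analogy spelled out (counit $\eta$ with $\eta^-(\ast)=I_\S$, the counit diagram unfolding via $\lambda_\S^{-1}$, $\rho_\S^{-1}$ to the unit axiom $a\circ I_\S =_\A a =_\A I_\S\circ a$, and $\eta'\cdot r=_{\mathbf{BCov}_i}\eta$ unfolding to $r^-I_{\S'}=_\A I_\S$). No gaps; the computations match the paper's conventions for the tensor of morphisms and composition of relations.
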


\section{Predicative locales: formal covers}
\label{fcov} In this section we finally come to the case of locales. 
It is common to associate the name ``Formal Topology''
with a predicative version of the theory of Locales. However,
following its first appearance in \cite{IFS}, we prefer to conceive
formal topologies as a predicative version of \emph{open} (also
known as \emph{overt}) locales (see \cite{galoisexten}). Predicative
presentations of locales tout court are called here formal covers. 

As usual a \emph{locale} or \emph{frame} is a suplattice in which binary 
meets distribute over arbitrary joins.
It is well known that a quantale is a locale exactly when its
multiplication coincides with the order-theoretic meet. In our
framework, the quantale $(Sat(\A),\bigvee^\A,\circ^\A)$ presented by
a convergent cover $(S,\cov,\circ)$ is actually a locale when
$$\A U\circ^\A\A V\quad=\quad \A U\wedge^\A\A V$$ for every $U,V\sub
S$. By unfolding definitions, this reduces to $\A (U\circ V)= \A U
\cap \A V $ for every $U,V\sub S$.

\begin{definition}
A convergent cover $(S,\cov,\circ)$ is called a \emph{formal cover}
if $\circ^\A$ = $\wedge^\A$, that is: $$\A(U\circ V)\quad =\quad \A
U\cap\A V$$ for all $U,V\sub S$. In this case,
$(Sat(\A),\bigvee^\A,\circ^\A)$ is a locale.
\end{definition}

\begin{lemma}
\label{equivcircmeet} For every convergent cover all the items in
each column are equivalent one to another:

\begin{tabular}{ll@{$\qquad$}|@{$\qquad$}ll} \\
$A1.$ & $\A(U \circ V) \sub \A U \cap \A V$ & $B1.$ & $\A U \cap \A
V \sub \A (U \circ V)$ \\ & & & \\ $A2.$ & $U \circ V \cov U$ and $U
\circ V \cov V$ \hfill
(weakening) & $B2.$ & $U \cov U \circ U$ \hfill (contraction) \\
& & & \\ $A3.$ & $\displaystyle{\frac{U \cov W}{U \circ V \cov W}}$
and $\displaystyle{\frac{V \cov W}{U \circ V \cov W}}$ \hfill
($\circ$-left) & $B3.$ & $\displaystyle{\frac{W\cov U \quad W\cov
V}{W\cov U \circ V}}$ \hfill ($\circ$-right) \\ & & &
\\ $A4.$ & $a \circ b \cov a$ and $a \circ b \cov b$ \hfill (weakening)
& $B4.$ & $a \cov a \circ a$ \hfill (contraction) \\ & & & \\
$A5.$ & $\displaystyle{\frac{a \cov W}{a \circ b \cov W}}$ and
$\displaystyle{\frac{b \cov W}{a \circ b \cov W}}$ \hfill
($\circ$-left) & $B5.$ & $\displaystyle{\frac{a\cov U \quad a\cov
V}{a\cov U \circ V}}$ \hfill ($\circ$-right)
\end{tabular}

\vskip10pt
\noindent Summing up, the following are equivalent:
\begin{list}{-}{ }
\item[$i)$] $\circ^{\A}$ = $\wedge^{\A}$, that is $\A (U \circ V)$ = $\A U\cap \A V$;
\item[$ii)$] weakening and contraction axioms;
\item[$iii)$] $\circ$-left and $\circ$-right.
\end{list}
\end{lemma}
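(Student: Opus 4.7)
The plan is to prove the two columns of equivalences separately and then combine them for the summary. The main tools I would rely on are: (a) the basic identity $U \cov V$ iff $\A U \sub \A V$; (b) the fact, established in Lemma~\ref{lemma existential op}, that in a convergent cover $\circ$ is determined by its restriction to singletons, so that $U \circ V =_\A \bigcup_{a \releps U,\, b \releps V} a \circ b$; (c) stability/localization of $\circ$ with respect to $\cov$, which is part of being a basic cover with operation.

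For column A, I would first read A1 as the conjunction of $\A(U\circ V)\sub\A U$ and $\A(U\circ V)\sub\A V$, which by (a) is literally A2; so A1$\Leftrightarrow$A2. The implication A3$\Rightarrow$A2 is got by specializing A3 to $W:=U$ (resp.\ $W:=V$) against the reflexive $U\cov U$, while A2$\Rightarrow$A3 is immediate by transitivity of $\cov$. For A2$\Leftrightarrow$A4, the downward direction is obvious; for the upward direction I use (b): assuming A4, each $a\circ b \cov a$ with $a\releps U$ gives $a\circ b \cov U$ by reflexivity, so $\bigcup_{a,b} a\circ b \cov U$, and hence $U\circ V \cov U$ by (b). The analogous argument (with $a\cov W$ in place of $a\releps U$) yields A3$\Leftrightarrow$A5.

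For column B, I would handle B2$\Leftrightarrow$B3 first: specializing B3 to $U:=V:=W$ with $W\cov W$ gives $W\cov W\circ W$, i.e.\ B2; conversely, from hypotheses $W\cov U$ and $W\cov V$ I use stability to get $W\circ W \cov U\circ V$ and then B2 plus transitivity to conclude $W\cov U\circ V$. Then B1$\Leftrightarrow$B3 is easy: B3 gives B1 by applying it pointwise to every $a\in \A U\cap\A V$ (here I invoke the elementwise B5, interchangeable with B3 as below), while taking $V:=U$ in B1 recovers B2. The equivalences B2$\Leftrightarrow$B4 and B3$\Leftrightarrow$B5 are by singletonization in the same style as column A; for B2$\Rightarrow$ the set version starting from B4 I combine $a \cov a\circ a$ with stability (twice, from $a\releps U$) to get $a\cov U\circ U$ for each $a\releps U$, hence $U\cov U\circ U$.

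For the summary, unfolding (i) using the description (\ref{meet in SatA}) of $\wedge^\A$ in $Sat(\A)$ shows that (i) is exactly the conjunction of A1 and B1; hence (i) is equivalent to A2$\land$B2 (weakening and contraction) and to A3$\land$B3 ($\circ$-left and $\circ$-right) by the two columns. The main obstacle I expect is being disciplined about the passage between set-level and element-level formulations: one must consistently apply property (b), which is available only because we are inside a convergent cover, together with stability, in order to lift elementwise statements to subsets and back.
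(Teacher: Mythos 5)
Your proposal is correct: every step checks out, and you use exactly the tools the paper makes available for this situation (the correspondence $U\cov V \Leftrightarrow \A U\sub\A V$, stability, and the singleton determination of $\circ$ from lemma~\ref{lemma existential op}, which is what licenses the passage between the element-level items A4, A5, B4, B5 and their subset versions). The paper itself leaves this proof to the reader, and your argument is the natural one it intends, so there is nothing to compare beyond noting that your write-up fills that gap faithfully.
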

\begin{proof}
Left to the reader.
\end{proof}

This leads to a number of characterizations of formal covers. For
instance, formal covers are precisely the convergent covers
satisfying weakening and contraction. We have also the following.

\begin{proposition}\label{prop. charact. formal cover}
A structure $\S = (S,\cov,\circ)$ is a formal cover if and only if
$(S,\cov)$ is a basic cover and $\circ$ is a binary operation on
subsets of $S$ such that:
\begin{enumerate}
\item $U\circ V$ $=_\A$ $\bigcup_{a \releps
U,\ b \releps V} (a\circ b)$\hfill ($\circ$ is determined by its
restriction on singletons)
\item $a\cov U$ $\land$ $a\cov V$ $\Longrightarrow$ $a$ $\cov$ $U\circ V$\hfill($\circ$-right)
\item $a\circ b$ $\cov$ $a\ $ and $\ a\circ b$ $\cov$ $b$\hfill(weakening)
\end{enumerate} for all $a,b,c\in S$ and $U,V\sub S$.
\end{proposition}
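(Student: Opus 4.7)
The plan is to derive both directions from the characterizations already recorded in Lemmas~\ref{lemma existential op} and~\ref{equivcircmeet}. For the forward direction, if $\S$ is a formal cover, it is in particular a convergent cover, so Lemma~\ref{lemma existential op}(3) yields condition~1. Being a formal cover, Lemma~\ref{equivcircmeet} applies: item~$A4$ is exactly condition~3 (weakening on elements), and item~$B5$ is exactly condition~2 ($\circ$-right on elements).

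For the backward direction, assume 1, 2, 3. I must show that $\S$ is a basic cover with operation (Definition~\ref{def. BCovwith operation}), that it is in addition convergent, and finally that $\circ^\A = \wedge^\A$. To verify that $\S$ is a basic cover with operation, I derive stability, commutativity modulo $=_\A$ and associativity modulo $=_\A$ \emph{on singletons}, from which the subset versions follow by condition~1 exactly as in~(\ref{eq. stability and localization on elements}). For stability: given $a\cov U$ and $b\cov V$, each $c\releps a\circ b$ satisfies $c\cov a$ and $c\cov b$ by weakening plus transitivity of $\cov$, hence $c\cov U$ and $c\cov V$, and then condition~2 gives $c\cov U\circ V$, so $a\circ b\cov U\circ V$. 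For commutativity: for $c\releps a\circ b$, weakening yields $c\cov a$ and $c\cov b$, whereupon condition~2 applied to the singletons $\{b\}$ and $\{a\}$ gives $c\cov b\circ a$. For associativity $(a\circ b)\circ c\cov a\circ(b\circ c)$: every $d\releps(a\circ b)\circ c$ lies in $x\circ c$ for some $x\releps a\circ b$, so iterated weakening and transitivity produce $d\cov a$, $d\cov b$, $d\cov c$; two applications of condition~2 then give first $d\cov b\circ c$ and then $d\cov a\circ(b\circ c)$, as required.

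Convergence (distributivity of $\circ^\A$ over $\bigvee^\A$) is now immediate, since condition~1 is item~3 of Lemma~\ref{lemma existential op}. The remaining locale identity $\A(U\circ V)=\A U\cap\A V$ splits into two inclusions: the first is weakening on subsets, a direct consequence of 1 and 3, and the second is $\circ$-right on subsets, which follows by applying condition~2 pointwise to every $c\releps\A U\cap\A V$. The only mildly delicate step in the whole argument is associativity, which requires a small bookkeeping exercise of peeling off the nested $\circ$ with weakening before reassembling with $\circ$-right; everything else is a direct application of the two lemmas already in hand.
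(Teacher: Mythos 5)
Your argument is correct and follows essentially the same route as the paper's own proof, which likewise handles the nontrivial direction by deriving stability, commutativity and associativity from weakening and $\circ$-right applied pointwise (the forward direction and the final convergence and meet identities being immediate from Lemmas~\ref{lemma existential op} and~\ref{equivcircmeet}, as you note). The only point to tighten is the associativity step: condition~1 identifies $(a\circ b)\circ c$ with $\bigcup_{x\releps a\circ b}x\circ c$ only modulo $=_\A$, so an element $d\releps(a\circ b)\circ c$ need not literally belong to some $x\circ c$; however $d\cov\bigcup_{x\releps a\circ b}x\circ c$ does hold, every element of that union is covered by each of $a$, $b$ and $c$ by weakening, and transitivity then yields $d\cov a$, $d\cov b$, $d\cov c$, after which your two applications of $\circ$-right go through unchanged.
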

\begin{proof}
Stability, associativity and commutativity are all
derivable from  $\circ$-right and weakening. In fact,
if $a\cov U$ and $b\cov V$, then $a\circ b\cov U$ and $a\circ b\cov
V$ by weakening (and transitivity); so $a\circ b\cov U\circ V$ by
$\circ$-right. This proves stability. Commutativity is easy: since
$a\circ b$ is covered both by $b$ and $a$ (weakening), it is also
covered by $b\circ a$ ($\circ$-right). Finally, we check
associativity in the form $(a\circ b)\circ c\cov a\circ(b\circ c)$.
By $\circ$-right it is sufficient to prove $(a\circ b)\circ c\cov a$
and $(a\circ b)\circ c\cov b\circ c$. The former follows by
combining $(a\circ b)\circ c\cov a\circ b$ and $a\circ b\cov a$
which both hold  by weakening. To prove the latter it is
sufficient to have both $(a\circ b)\circ c\cov b$ and $(a\circ
b)\circ c\cov c$ which, again, follow by weakening.
\end{proof}

Note that every formal cover is unital:
\begin{proposition}\label{prop. every formal cover is unital}
If $\S
=(S,\cov,\circ)$ is a formal cover, then  $S$ is a unit for $\circ$.
\end{proposition}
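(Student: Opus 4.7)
The plan is to verify that the subset $I = S$ itself serves as a unit for $\circ$, by checking both $a \circ S =_\A a$ and $S \circ a =_\A a$ for each $a \in S$. The two key observations are that weakening gives one direction ``for free'' and that $\circ$-right combined with the trivial fact $a \cov S$ (by reflexivity, since $a \releps S$) gives the other.

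First I would establish $a \circ S \cov a$. Since $\S$ is a formal cover, by Proposition~\ref{prop. charact. formal cover}(1) we have $a \circ S =_\A \bigcup_{b \releps S}(a \circ b)$. By weakening (item $A4$ in Lemma~\ref{equivcircmeet}), $a \circ b \cov a$ for every $b \in S$, hence the union is covered by $a$, and so $a \circ S \cov a$.

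Next I would prove the reverse cover $a \cov a \circ S$. By reflexivity of $\cov$ we have both $a \cov a$ and $a \cov S$ (the latter because $a \releps S$). Applying $\circ$-right (item $B5$ in Lemma~\ref{equivcircmeet}) to these two covers yields $a \cov a \circ S$. Together with the previous step this gives $a \circ S =_\A a$.

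Finally, for the symmetric equation $S \circ a =_\A a$, I would invoke commutativity of $\circ$ modulo $=_\A$, which holds in every convergent cover (Definition~\ref{def. BCovwith operation}(3)), to get $S \circ a =_\A a \circ S =_\A a$. There is no real obstacle here; the only thing to notice is that $S$ is available as a ``top'' subset covering every element, which is exactly why it functions as a unit in the presence of weakening and $\circ$-right.
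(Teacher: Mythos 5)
Your proof is correct and follows essentially the same route as the paper, which disposes of the claim in one line ``thanks to weakening and contraction'': your use of $\circ$-right (item $B5$) in place of contraction is immaterial, since Lemma~\ref{equivcircmeet} makes these interchangeable in any formal cover, and the remaining ingredients (weakening for $a\circ S\cov a$, commutativity modulo $=_\A$ for the symmetric equation) are exactly what the paper's argument implicitly uses.
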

\begin{proof}
$a\circ S$ $=_\A$ $a$ holds  thanks to weakening and contraction.
\end{proof}

This proposition says that, impredicatively, every frame is a unital quantale.

\subsection{Morphisms between formal covers}

Morphisms between frames are functions
preserving both arbitrary joins and finite meets.
The category of frames is called {\bf Frm}. The category of locales {\bf Loc} 
is usually introduced as the opposite of {\bf Frm}. So objects are the same, 
while morphisms of locales are usually just morphisms of frames
``in the opposite direction''.
Contrary to this common practice, one can provide
an {\it intuitive topological} definition of morphisms between 
formal covers (see~\cite{MV04,bp}).

When frames are seen as particular unital quantales, their morphisms 
are precisely unital quantale morphisms.
In our framework, thanks to 
proposition~\ref{prop. every formal cover is unital}, we put:
\begin{definition}
The full subcategory of {\bf uCBCov} whose objects are formal covers  is 
called {\bf FCov}.
A unital convergent cover map between
two formal covers is called a \emph{formal cover map}.
\end{definition}

Thus a formal cover map $r\ :\ \S\rightarrow\T$ is \emph{total}, that is, it satisfies the equation
$r^-T$ $=_{\A_\S}$ $S$.  

\begin{proposition}
{\bf FCov} is impredicatively dual to {\bf Frm} and equivalent to
{\bf Loc}.
\end{proposition}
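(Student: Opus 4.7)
The plan is to reduce everything to Proposition~\ref{prop. CCovdual cQu unital}, which gives the duality $\mathbf{uCBCov}\simeq\mathbf{ucQu}^{op}$, and then to identify the full subcategories on both sides that correspond to formal covers and to frames, respectively.

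First I would recall that a frame is exactly a unital commutative quantale whose multiplication coincides with the binary meet, and that a frame morphism (by definition, a map preserving arbitrary joins and finite meets) is precisely a unital commutative quantale morphism between two such quantales: preservation of binary meets is preservation of the multiplication, and preservation of the top is preservation of the multiplicative unit (since, when multiplication equals meet, the only candidate for a $\circ$-unit is the top element, and conversely the top is the empty meet, which any finite-meet preserving map must preserve). Hence $\mathbf{Frm}$ sits inside $\mathbf{ucQu}$ as the full subcategory whose objects are those quantales with $\circ=\wedge$.

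Next, by definition a formal cover is a unital convergent cover $(S,\cov,\circ)$ such that $\circ^\A=\wedge^\A$ on $Sat(\A)$, and by Proposition~\ref{prop. every formal cover is unital} the unit is automatically present. Under the correspondence sending $(S,\cov,\circ)$ to the quantale $(Sat(\A),\bigvee^\A,\circ^\A)$, formal covers correspond exactly to frames. Since $\mathbf{FCov}$ is a full subcategory of $\mathbf{uCBCov}$ and $\mathbf{Frm}$ is a full subcategory of $\mathbf{ucQu}$, the duality of Proposition~\ref{prop. CCovdual cQu unital} restricts to a duality $\mathbf{FCov}\simeq\mathbf{Frm}^{op}$; this uses, on morphisms, exactly the identification of unital commutative quantale maps between frames with frame morphisms described above.

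Finally, $\mathbf{Loc}$ is by convention $\mathbf{Frm}^{op}$, so $\mathbf{FCov}\simeq\mathbf{Loc}$. The only non-routine step is matching morphisms, and this is handled by the units-equal-tops observation; everything else is a straightforward restriction of previously established functors.
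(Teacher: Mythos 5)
Your proof is correct and follows essentially the same route as the paper: restrict the already-established duality between convergent covers and commutative quantales to the full subcategories of formal covers and frames, with the morphism-matching handled by the observation that the $\circ$-unit/totality condition corresponds to preservation of the top. The only cosmetic difference is that you invoke the unital duality $\mathbf{uCBCov}\simeq\mathbf{ucQu}^{op}$ (Proposition~\ref{prop. CCovdual cQu unital}) where the paper cites Proposition~\ref{prop. CCovdual cQu} and phrases top-preservation via totality of formal cover maps; the content is the same.
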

\begin{proof}
Recall that {\bf CBCov} is dual to {\bf cQu} (proposition \ref{prop.
CCovdual cQu}). So it is sufficient to observe that a convergent
cover is a formal cover iff the corresponding quantale is a  locale,
and that a convergent cover map is total iff the corresponding
morphisms of quantales preserves top elements.
\end{proof}

If $\S$ = $(S,\cov,\circ)$ and $\S'$ = $(S,\cov,\circ')$ are two formal covers
with the same underlying basic cover, then $U \circ V$ $=_\A$ $U \circ' V$ for all $U,V \sub S$
by the definition of formal cover.
So $\S$ and $\S'$ are isomorphic as convergent covers by lemma
\ref{lemma circ saturo}. Since the isomorphism provided by that
lemma is given by the identity relation on $S$, it is also total and
hence it is an isomorphism of formal covers. This proves the
following:

\begin{lemma}\label{lemma uniqueness of circ for formal covers}
Two formal covers sharing the same underlying basic cover are
isomorphic. In other words, given a basic cover $(S,\cov)$, there
exists, up to isomorphism of formal covers, at most one operation
$\circ$ such that $(S,\cov,\circ)$ is a formal cover.
\end{lemma}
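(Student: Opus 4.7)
The plan is to reduce the statement to Lemma \ref{lemma circ saturo}, which already asserts that any two convergent covers sharing the same underlying basic cover and having $=_\A$-equal operations are isomorphic via the identity relation. So the argument splits naturally into three short steps: first, show that any two operations $\circ$ and $\circ'$ making $(S,\cov)$ into a formal cover must in fact agree modulo $=_\A$; second, invoke Lemma \ref{lemma circ saturo} to obtain a \textbf{CBCov}-isomorphism; third, verify that this isomorphism lives in \textbf{FCov}, i.e., is total.

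For step one, suppose $\S = (S,\cov,\circ)$ and $\S' = (S,\cov,\circ')$ are both formal covers on the same basic cover. The defining equation of a formal cover forces
$$\A(U\circ V) \;=\; \A U\cap \A V \;=\; \A(U\circ' V)$$
for all $U,V\sub S$, which is exactly $U\circ V =_\A U\circ' V$. Step two is then immediate: Lemma \ref{lemma circ saturo} produces an isomorphism $\S\to\S'$ in \textbf{CBCov} represented by the identity relation $i$ on $S$, with the identity relation again providing its inverse.

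For step three, it remains to check that $i$ (and its inverse) are unital convergent cover maps, i.e., total. By Proposition \ref{prop. every formal cover is unital}, the whole set $S$ serves as a unit for $\circ$ in $\S$ and for $\circ'$ in $\S'$. Since $i^- S = S$, the totality condition $i^- S =_{\A} S$ needed to make $i$ a formal cover map is trivially satisfied in both directions. This gives an isomorphism in \textbf{FCov}, which is the content of the lemma.

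The expected obstacle is minimal: essentially all the genuine work is carried by Lemma \ref{lemma circ saturo} and Proposition \ref{prop. every formal cover is unital}. The only mild point to be careful about is that being a formal cover map (as opposed to a mere convergent cover map) imposes the unit-preservation condition, but since both formal covers have the same unit $S$ and the isomorphism is the identity relation, this condition is automatic rather than adding a new proof obligation.
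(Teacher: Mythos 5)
Your proposal is correct and follows essentially the same route as the paper: observe that $\A(U\circ V)=\A U\cap\A V=\A(U\circ' V)$ forces $\circ =_\A \circ'$, apply Lemma \ref{lemma circ saturo} to get an isomorphism in \textbf{CBCov} given by the identity relation, and note that this identity relation is total (and unit-preserving, the unit being $S$ by Proposition \ref{prop. every formal cover is unital}), hence an isomorphism in \textbf{FCov}. The paper's own argument is exactly this, stated slightly more tersely.
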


We will see (lemma \ref{lemma circ eq. fregiu}) that, when it
exists, the operation  $\circ$ of a formal cover coincides up to $=_\A$ with an operation
that can be characterized explicitly in terms of the cover (see
(\ref{eq. def. fregiu indotto}) below). Hence, while for convergent
covers the operation $\circ$ is \emph{new structure}, for a formal
cover the existence of $\circ$ becomes a \emph{property}.

\subsection{Inductive generation of formal covers}
\label{inductive generation formal covers}

In this section we extend the method for generating basic covers
(suplattices) and convergent covers (commutative quantales) to the
case of formal covers (locales). Given an axiom-set $I,C$
and a map $\delta$ as in proposition \ref{prop. ind. gen. of
convergent covers}, we consider the axiom-set $J',D'$ constructed
there. Here we define a further axiom-set, say $J'',D''$, by
enlarging $J',D'$ in a suitable way in order to take care of the
extra axiom schemata $a\circ b\cov a$ (weakening) and $a\cov a\circ
a$ (contraction).

\begin{definition}
\label{axfc}
Let $I,C$ be an axiom-set on a set $S$ and let $\delta$ $:$
$S\times S$ $\longrightarrow$ $\P(S)$ be an arbitrary map. We put
$U\circ V$ = $\bigcup\{\delta(a,b)$ $|$ $a\releps U$, $b\releps V\}$.
We call $J'',D''$ the axiom-set obtained by enlarging the axiom-set $J',D'$ 
in section~\ref{indqu} with the axioms of the form
$$
b\circ c  \cov c\qquad \qquad a\cov a\circ a
$$ for $a, b,c$ in $S$.
Formally, for every $a\in S$, $J''(a)$ is obtained from $J'(a)$ by adding a new index 
for every pair $(b,c)$ such that  $a\releps b\circ c$ and a single further index $*$.
Then one adds respectively the axioms $a\cov c$ for every $(b,c)$ and $a\cov a\circ a$.  
\end{definition}

\begin{proposition}\label{prop. ind. gen. of formal covers}
Let $I,C$ be an axiom-set on a set $S$ and let $\delta$ $:$ $S\times
S$ $\longrightarrow$ $\P(S)$ be an arbitrary map. We put $U\circ V$
$=$ $\bigcup\{\delta(a,b)$ $|$ $a\releps U$, $b\releps V\}$ and we
consider the basic cover $\cov$ generated by the axiom-set $J'',D''$
described above. Then $(S,\cov,\circ)$ is the least formal cover
 containing $\cov_{I,C}$ (that is, $a\cov_{I,C}U$ $\Rightarrow$
$a\cov U$ for all $a\in S$ and $U\sub S$) and extending $\delta$
(that is, $a\circ b$ $=_\A$ $\delta(a,b)$ for all $a,b\in S$).
\end{proposition}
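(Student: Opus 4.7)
The plan is to follow the pattern of the proof of Proposition~\ref{prop. ind. gen. of convergent covers}. First, the two containment clauses are immediate: $J'',D''$ extends $I,C$, so the induction principle for the generated cover gives $\cov_{I,C}\sub\cov$; and the equality $a\circ b=\delta(a,b)$ at the level of subsets gives $a\circ b=_\A\delta(a,b)$. Also, $\circ$ is determined by its restriction on singletons by the very definition $U\circ V=\bigcup\{\delta(a,b)\mid a\releps U,\,b\releps V\}$.

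To show $(S,\cov,\circ)$ is a formal cover I would invoke Lemma~\ref{equivcircmeet}: verify that it is a convergent cover and that weakening and contraction hold on elements. The weakening axioms in $J''$ yield $a\circ b\cov b$ directly via the infinity rule, and composing with commutativity yields $a\circ b\cov a$; the contraction axiom in $J''$ gives $a\cov a\circ a$ directly. Commutativity and associativity modulo $=_\A$ are built into $J,D\sub J''$, so the remaining substantial step is stability, $a\cov U\Rightarrow a\circ d\cov U\circ d$, proved by induction on the derivation of $a\cov U$. The base case (reflexivity) and the infinity cases already handled in the proof of Proposition~\ref{prop. ind. gen. of convergent covers}, namely the $I,C$-axioms, the commutativity/associativity axioms of $J,D$, and the locax axioms of $J',D'$, work verbatim. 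Two new infinity cases arise. For the weakening axiom, with $a\releps b\circ c$ and $D''(a,(b,c))=\{c\}$, one has $a\circ d\sub(b\circ c)\circ d\cov b\circ(c\circ d)\cov c\circ d$, where the second step is associativity and the third applies the weakening axiom element-wise to $b\circ(c\circ d)$; composing with the inductive hypothesis $c\circ d\cov U\circ d$ yields the conclusion. For the contraction axiom $a\cov a\circ a\cov U$, the inductive hypothesis gives $(a\circ a)\circ d\cov U\circ d$, so one is reduced to proving $a\circ d\cov(a\circ a)\circ d$; this is the main technical obstacle and has to be assembled by applying the contraction axiom element-wise on $a\circ d$ and then using weakening together with associativity to bridge the two nested products.

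For minimality, let $(S,\cov',\circ')$ be any formal cover with $\cov_{I,C}\sub\cov'$ and $a\circ' b=_{\A'}\delta(a,b)$. Since $\circ'$ is determined by its restriction on singletons (Proposition~\ref{prop. charact. formal cover}), one has $U\circ' V=_{\A'}U\circ V$ for all subsets $U,V\sub S$, and being a formal cover $\cov'$ validates weakening, contraction, commutativity, associativity and stability. Hence $\cov'$ satisfies every axiom of $J'',D''$, and the universal property of the inductively generated cover forces $\cov\sub\cov'$. Finally, uniqueness of the $\circ$-operation on $(S,\cov)$ up to isomorphism of formal covers follows from Lemma~\ref{lemma uniqueness of circ for formal covers}, so $(S,\cov,\circ)$ is indeed the least such formal cover.
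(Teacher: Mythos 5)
Your proposal is correct and follows essentially the same route as the paper: everything reduces to the induction of Proposition~\ref{prop. ind. gen. of convergent covers}, the only new observation being that the localized forms of weakening and contraction are derivable from their standard (element-wise) forms together with associativity and commutativity, so locax need not be extended to the new axioms. The contraction step you flag as the technical obstacle does close exactly as you sketch: $a\circ d\cov(a\circ d)\circ(a\circ d)=_\A((a\circ a)\circ d)\circ d\cov(a\circ a)\circ d$, using the element-wise contraction axiom, associativity/commutativity at the level of subsets, and weakening.
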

\begin{proof}
The claim is almost obvious after proposition \ref{prop. ind. gen. of convergent covers}. One
should only note that it is not necessary to modify locax in order to
take care of the new axioms of weakening and contraction. For
instance, the localized form of weakening, namely $(a\circ b)\circ
c$ $\cov$ $a\circ c$, follows already from its standard form. In
fact, by associativity and commutativity, it is equivalent to
$(a\circ c)\circ b$ $\cov$ $a\circ c$ which holds by
weakening.
\end{proof}

\begin{definition}
A formal cover $(S,\cov,\circ)$ is \emph{inductively generated} if
it is constructed as in proposition \ref{prop. ind. gen. of formal
covers} for some axiom-set $I,C$ over $S$ and some map $\delta$ $:$
$S\times S$ $\longrightarrow$ $\ P(S)$.

We call {\bf FCov$_i$} the full subcategory of {\bf FCov} whose
objects are inductively generated.
\end{definition}

It is straightforward to extend 
lemma~ \ref{lemma continuity for generated convergent covers} to the framework 
of  formal covers.
\begin{lemma}\label{lemma continuity for generated formal covers}
Let $\S$ = $(S,\cov_{\S},\circ_{\S})$ and $\T$ = $(T,\cov_{\T},\circ_{\T})$ be two
formal covers. Assume that $\T$ is inductively generated by means of
an axiom-set $I,C$ and a map $\delta$ $:$ $T\times T$
$\longrightarrow$ $\P(T)$ according to proposition \ref{prop. ind.
gen. of formal covers}. Finally, let $r$ be a relation between $S$
and $T$. Then $r$ is a formal cover map from $\S$ to $\T$ if and only
if the following hold:
\begin{enumerate}
\item $r^-a\cov r^-C(a,i)$ for all $a\in T$ and all $i\in I(a)$;
\item $r^-\delta(a,b)\ =_\A\ (r^-a)\circ(r^-b)$ for all $a,b\in T$;
\item $r^-T\ =_\A\ S$.
\end{enumerate}
\end{lemma}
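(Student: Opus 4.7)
The plan is to apply the basic-cover continuity criterion (Lemma~\ref{lemma continuity for generated covers}) to the axiom-set $J'',D''$ of Definition~\ref{axfc} that generates $\T$ as a formal cover, and to handle convergence and totality separately.

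For the forward direction, if $r$ is a formal cover map then by definition it is a unital convergent cover map that, in particular, respects $\cov_\T$. Condition 1 follows because each $C(a,i)$ is a cover of $a$ in $\T$; condition 2 is the convergence equation on singletons, which extends to arbitrary subsets since $\circ$ is determined by its restriction to singletons (Lemma~\ref{lemma existential op}); condition 3 is the unit-preservation clause, using that $T$ and $S$ are the units of $\T$ and $\S$ by Proposition~\ref{prop. every formal cover is unital}.

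For the converse, assume 1, 2 and 3. By Lemma~\ref{lemma existential op} condition 2 extends to $r^-(U\circ_\T V)=_\A(r^-U)\circ_\S(r^-V)$, and condition 3 is totality. The remaining task is to prove that $r$ is a basic cover map, which by Lemma~\ref{lemma continuity for generated covers} reduces to $r^-b\cov_\S r^-D''(b,j)$ for each axiom of $J'',D''$. I would split these axioms into two groups. For those already present in the convergent-cover axiom-set $J',D'$ (namely the original $I,C$ axioms together with those for commutativity, associativity and locax), I would reuse the case analysis from the proof of Lemma~\ref{lemma continuity for generated convergent covers}: condition 1 settles the $I,C$ axioms, and condition 2 combined with the commutativity, associativity and localization of $\circ_\S$ settles the others.

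The essentially new verification concerns weakening and contraction, and this is where the assumption that $\S$ is a formal cover enters. For a weakening axiom $a\cov c$ with $a\releps b\circ c$, condition 2 yields $r^-a\sub r^-(b\circ c)=_\A(r^-b)\circ_\S(r^-c)$, and item $A2$ of Lemma~\ref{equivcircmeet} in $\S$ gives $(r^-b)\circ_\S(r^-c)\cov_\S r^-c$; the companion axiom $a\cov b$ is symmetric. For a contraction axiom $a\cov a\circ a$, item $B2$ of Lemma~\ref{equivcircmeet} applied to $U=r^-a$ gives $r^-a\cov_\S(r^-a)\circ_\S(r^-a)$, and by condition 2 the right-hand side equals $r^-(a\circ a)$ modulo $=_\A$. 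The one pitfall to watch for is that no localized variants of weakening or contraction need separate checking: as observed in Proposition~\ref{prop. ind. gen. of formal covers}, in the presence of associativity and commutativity their localized forms (such as $(a\circ b)\circ c\cov a\circ c$) already follow from the standard ones, so they are not added to $J'',D''$ as extra axioms.
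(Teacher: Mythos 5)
Your proof is correct and follows exactly the route the paper intends: the paper omits the proof, stating only that it is a straightforward extension of Lemma~\ref{lemma continuity for generated convergent covers}, and you carry out precisely that extension by applying Lemma~\ref{lemma continuity for generated covers} to the axiom-set $J'',D''$, reusing the convergent-cover argument for the old axioms, and correctly verifying the new weakening and contraction axioms via the corresponding properties ($A2$, $B2$ of Lemma~\ref{equivcircmeet}) of $\S$. The only negligible inaccuracy is that $J'',D''$ as defined in Definition~\ref{axfc} contains only the axiom $a\cov c$ for $a\releps b\circ c$ (the companion $a\cov b$ being derivable through commutativity), so your ``symmetric'' case is simply not needed rather than wrong.
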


The following picture summarizes the main definitions of this paper.
$$
\label{tabella}
\begin{array}{|c|c|c|}
\hline & STRUCTURE & MORPHISMS \\
 & ON\ OPENS & \\ \hline
 \textbf{basic cover} & \mbox{suplattice} & \mbox{basic cover
 map =}\\
 \S=(S,\cov) & (Sat(\A),\bigvee^\A) & r:S\rightarrow T \mbox{ relation } + \\
  &  & b\cov_\T V\Rightarrow r^-b\cov_\S r^- V \\
  \hline 
\textbf{convergent cover} &
\mbox{commutative quantale} & \hfill\mbox{convergent cover map =} \\
 \S=(S,\cov,\circ) & (Sat(\A),\bigvee^\A,\circ^\A) & \mbox{basic cover map } + \\
 \mbox{basic cover + associativity +}&  & r^-(a
\circ_\T b)=_{\A}r^- a \circ_\S r^-b \\
  \mbox{commutativity + localization +} & &  \\ \mbox{$\circ$ determined by singletons}
   & & \\
  \hline
\textbf{unital convergent cover} &
\mbox{unital comm. quantale} & \hfill\mbox{unital conv. cover map =} \\
 \S=(S,\cov,\circ,I) & (Sat(\A),\bigvee^\A,\circ^\A,\A I) & \mbox{convergent cover map } + \\
 \mbox{convergent cover + }a =_\A a\circ I & & r^-I_\T=_\A I_\S \\
   \hline \textbf{formal cover} &
\mbox{locale (frame)} & \mbox{formal cover map =} \\
 \mbox{convergent cover } +  & (Sat(\A),\bigvee^\A,\wedge^\A) & \textrm{ convergent cover map +}\\
  \circ^\A=\wedge^\A \mbox{ (weakening and contraction)} &  &  r^-T=_\A S \\
\hline
\end{array}
$$

\section{Connection with other definitions in the literature}
\label{section other definitions}

In this section we review the most relevant different presentations of
formal cover (derived from the corresponding versions of formal
topology) given in the literature. We prove constructively that they
all give rise to equivalent categories; this seems to appear here
explicitly for the first time. Moreover, we show that they can 
all be obtained as particular instances of our present definition.

\subsection*{The preorder induced by a basic cover}
\label{fregiu} For $(S,\cov)$ a basic cover and $a,b\in S$, we
consider the preorder (that is, the reflexive and transitive binary
relation)
\begin{equation}\label{def. <= indotto}
 a\leq b \quad\stackrel{def}
{\Longleftrightarrow}\quad a\cov b.
\end{equation}
As usual, we write $\fregiu U$ for the subset $\{a\in S$ $:$
$(\exists u\releps U)(a\leq u)\}$. Moreover, we put
 \begin{equation}\label{eq. def. fregiu indotto}U\fregiubi
V\, =\, (\fregiu U)\cap(\fregiu V)\ .\end{equation}
 Trivially, we have $a\fregiubi b$
$=$ $\A a\cap\A b$ and $U\fregiubi V$ $=$ $\bigcup_{a\releps U,
b\releps V}a\fregiubi b$ so that $\fregiu$ 
 is determined by its restriction on singletons. The map $U$ $\mapsto$ $\fregiu U$ is a
saturation (or closure) operator on $\P(S)$. Thus it makes sense to
consider the structure
$(Sat(\fregiu),\bigvee^{\fregiu},\wedge^{\fregiu})$ which is always
a lattice. Moreover, all the following hold:
\begin{enumerate}
\item $\fregiu$ is commutative \hfill $U\fregiubi V\ =\ V\fregiubi U$
\item $\fregiu$ is associative \hfill $(U\fregiubi V)\fregiubi W\ =\
U\fregiubi(V\fregiubi W)$
\item $\fregiu$ distributes over unions \hfill $U\fregiubi(\bigcup_{i\in I}V_i)\ =\ \bigcup_{i\in I}(U \fregiubi
V_i)$
\item $\fregiu$-left \hfill $\A(U\fregiubi V)\ \sub\ \A U\cap\A V$
\item contraction \hfill $U\fregiubi U\ =_\A\ U$
\end{enumerate} for every $U,V,U',V'\sub S$ and every set-indexed family
$\{V_i\}_{i\in I}$ in $\P(S)$. Note, however, that the operation
$\fregiu$ does not in general satisfy stability (it does not respect
$=_\A$), so the triple $(S,\cov,\fregiu)$  is
neither a formal cover, nor a convergent cover, nor even a basic
cover with operation (definition \ref{def. BCovwith operation}).

\subsection{$\cov$-formal covers}\label{formaltopologies}

Thanks to the discussion above and by proposition \ref{prop.
charact. formal cover}, it is immediate to see that:

\begin{lemma}
Given a basic cover $(S,\cov)$, the structure $(S,\cov,\fregiu)$ is a 
formal cover if and only if $\fregiu$ satisfies
$\fregiu$-right (in the sense of lemma \ref{equivcircmeet}).
\end{lemma}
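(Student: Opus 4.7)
The plan is to invoke Proposition~\ref{prop. charact. formal cover} directly. That proposition reduces being a formal cover to three conditions on $\circ$: (i) determination by singletons, (ii) $\circ$-right, and (iii) weakening. I will check that $\fregiu$ automatically satisfies (i) and (iii), so that the added hypothesis of $\fregiu$-right, which is exactly (ii), is both necessary and sufficient.

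For the ``only if'' direction, if $(S,\cov,\fregiu)$ is a formal cover then by Lemma~\ref{equivcircmeet} (passing from item $i)$ to item $iii)$, specifically condition B5) it satisfies $\circ$-right with $\circ = \fregiu$, which is precisely $\fregiu$-right.

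For the ``if'' direction, assume $\fregiu$-right. Condition (i) of Proposition~\ref{prop. charact. formal cover} is immediate from the formula $U \fregiubi V = \bigcup_{a \releps U,\, b \releps V} a \fregiubi b$ already noted in the preceding discussion. Condition (iii), weakening, follows from the very definition $a \fregiubi b = (\fregiu a) \cap (\fregiu b)$: every element of $a \fregiubi b$ lies in $\fregiu a$ and hence is $\leq a$, i.e.\ covers $a$ by (\ref{def. <= indotto}); similarly for $b$. Equivalently, property 4 of the list in Section~\ref{fregiu} ($\fregiu$-left, $\A(U \fregiubi V) \sub \A U \cap \A V$) specialises to singletons to give exactly weakening. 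Condition (ii) is $\fregiu$-right, which is our hypothesis. Applying Proposition~\ref{prop. charact. formal cover} yields that $(S,\cov,\fregiu)$ is a formal cover.

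There is no real obstacle here: the properties of $\fregiu$ established in the list preceding the lemma already provide everything except $\fregiu$-right, so the content of the lemma is essentially a bookkeeping exercise combining that list with Proposition~\ref{prop. charact. formal cover} and Lemma~\ref{equivcircmeet}. The only point to watch is that weakening really does come for free from the definition of $\fregiubi$, which explains why $\fregiu$-right is the unique additional requirement singled out in the statement.
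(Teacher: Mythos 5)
Your proof is correct and follows exactly the route the paper intends: the paper states the lemma as an immediate consequence of the list of properties of $\fregiu$ established just before it together with Proposition~\ref{prop. charact. formal cover} (plus Lemma~\ref{equivcircmeet} for the ``only if'' direction), which is precisely the bookkeeping you carry out. No gap; you have simply spelled out what the paper leaves implicit.
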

This justifies the following (see \cite{cssv}):
\begin{definition}
Let $\S$ = $(S,\cov)$ be a basic cover. We say that $\S$ is a
\emph{$\cov$-formal cover} if $(S,\cov,\fregiu)$ is a formal cover,
that is, if \enspace $a\cov U\;\land \; a\cov V\;\Rightarrow \; a\cov(U\fregiu V)$ \enspace
for all $a\in S$ and all $U,V\sub S$.
\end{definition}

Clearly, $\cov$-formal covers can be identified with those
particular formal covers $(S,\cov,\circ)$ for which $\circ$ =
$\fregiu$, that is $a\circ b$ = $\{c\in S$ $|$ $c\cov a$ $\land$
$c\cov b\}$ for all $a,b\in S$. Thus:

\begin{definition} We call $\cov${\bf-FCov}  the full subcategory of {\bf FCov}
whose objects are $\cov$-formal covers.
\end{definition}

So a morphism $r$ between two $\cov$-formal covers $(S,\cov)$ and
$(S',\cov')$ is a morphism between the corresponding basic
covers which satisfies the extra conditions: $r^-(a\fregiubi' b)$
$=_\A$ $r^-a\fregiubi r^-b$ and $r^-S'$ $=_\A$ $S$.

\begin{lemma}\label{lemma circ eq. fregiu}
If $(S,\cov,\circ)$ is a formal cover, then $\circ$ coincides with
$\fregiu$ modulo $=_{\A}$, that is $U \circ V$ $=_{\A}$ $U \fregiubi
V$ for all $U,V \sub S$. Hence $(S,\cov)$ is a $\cov$-formal cover
and $(S,\cov,\circ)$ is isomorphic to $(S,\cov,\fregiu)$.
\end{lemma}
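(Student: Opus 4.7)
The plan is to prove the two subset inclusions establishing $\A(U\circ V)=\A(U\fregiubi V)$ for arbitrary $U,V\sub S$, using the defining identity $\A(U\circ V)=\A U\cap\A V$ of a formal cover together with the simple observation that $a\leq u$ implies $a\cov u$. The remaining conclusions then follow from Lemma~\ref{lemma circ saturo} and the definition of a $\cov$-formal cover.

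First I would verify the routine inclusion $\fregiu U\sub\A U$: if $a\leq u$ for some $u\releps U$, then $a\cov u\releps U$, so $a\cov U$ by transitivity with reflexivity. Consequently $U\fregiubi V=\fregiu U\cap\fregiu V\sub\A U\cap\A V$, and since by~(\ref{meet in SatA}) the set $\A U\cap\A V$ is already saturated, we get $\A(U\fregiubi V)\sub\A U\cap\A V=\A(U\circ V)$.

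The core of the argument is the reverse inclusion. For each $a_0\releps U$ and $b_0\releps V$, weakening gives $a_0\circ b_0\cov a_0$ and $a_0\circ b_0\cov b_0$, i.e.\ every $c\releps a_0\circ b_0$ satisfies $c\leq a_0$ and $c\leq b_0$; hence $c\releps\fregiu U\cap\fregiu V=U\fregiubi V$. Thus $a_0\circ b_0\sub U\fregiubi V$ for all such pairs. Since $(S,\cov,\circ)$ is in particular a convergent cover, item~$1$ of Lemma~\ref{lemma existential op} gives $U\circ V=_\A\bigcup_{a\releps U,\,b\releps V}(a\circ b)\sub U\fregiubi V$, so $U\circ V\cov U\fregiubi V$. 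Now take any $x\releps\A U\cap\A V$; by $\circ$-right (which holds in every formal cover by Lemma~\ref{equivcircmeet}) we obtain $x\cov U\circ V$, and transitivity with the previous step yields $x\cov U\fregiubi V$. This gives $\A U\cap\A V\sub\A(U\fregiubi V)$, completing the equality $U\circ V=_\A U\fregiubi V$.

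Having established the equation, the remaining claims are immediate: for all $U,V$ we have $\A(U\fregiubi V)=\A(U\circ V)=\A U\cap\A V$, so $(S,\cov,\fregiu)$ satisfies $\fregiu$-right and is therefore a formal cover, meaning $(S,\cov)$ is a $\cov$-formal cover; and $(S,\cov,\circ)$ is isomorphic to $(S,\cov,\fregiu)$ by Lemma~\ref{lemma circ saturo}. The only place requiring any real thought is the reverse inclusion in the core step, but even there the key combinatorial observation---that weakening forces elements of $a_0\circ b_0$ to lie below both $a_0$ and $b_0$---is essentially forced by the axioms, so I do not expect any genuine obstacle.
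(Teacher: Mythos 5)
Your proof is correct and uses essentially the same ingredients as the paper's: weakening gives $a\circ b\sub a\fregiubi b$ and $\circ$-right gives the converse cover, plus the fact that $\circ$ is determined by singletons, with the final claims delegated to lemma~\ref{lemma circ saturo} (the paper cites lemma~\ref{lemma uniqueness of circ for formal covers}, which is just that lemma plus totality of the identity relation). The only difference is cosmetic: the paper reduces at once to the singleton identity $a\fregiubi b=_\A a\circ b$, whereas you carry out the two inclusions at the level of arbitrary subsets.
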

\begin{proof} To prove the first part of the statement, it is
sufficient to check that $a\fregiubi b$ $=_\A$ $a\circ b$ for all
$a,b\in S$. One gets $a\fregiubi b$ $\cov$ $a\circ b$ by
$\circ$-right and $a\circ b$ $\sub$ $a\fregiubi b$ by weakening.
The second part follows from lemma \ref{lemma uniqueness of circ for
formal covers}.
\end{proof}

This lemma gives immediately that:
\begin{corollary}
The categories $\cov${\bf-FCov} and {\bf FCov} are equivalent.
\end{corollary}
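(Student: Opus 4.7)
The plan is to exhibit the inclusion $\iota\colon\cov\text{-}\mathbf{FCov}\hookrightarrow\mathbf{FCov}$ as an equivalence of categories. Since $\cov\text{-}\mathbf{FCov}$ is by definition a \emph{full} subcategory of $\mathbf{FCov}$, the inclusion is automatically full and faithful; so the only thing to verify is that $\iota$ is essentially surjective on objects.

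For essential surjectivity, I would pick an arbitrary object $\S=(S,\cov,\circ)$ of $\mathbf{FCov}$ and produce an isomorphism to an object of $\cov\text{-}\mathbf{FCov}$. But this is exactly what Lemma~\ref{lemma circ eq. fregiu} gives: it asserts that $(S,\cov)$ is already a $\cov$-formal cover and that $\S$ is isomorphic, as a formal cover, to $(S,\cov,\fregiu)$, which lives in $\cov\text{-}\mathbf{FCov}$. Hence every object of $\mathbf{FCov}$ is isomorphic to an object in the image of $\iota$.

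Combining fullness, faithfulness, and essential surjectivity yields that $\iota$ is an equivalence of categories, which is the claim. The ``hard part'' has already been discharged in Lemma~\ref{lemma circ eq. fregiu}: once one knows that the operation $\circ$ of a formal cover is forced to agree with $\fregiu$ modulo $=_\A$, the categorical equivalence is immediate.
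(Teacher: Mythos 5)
Your proof is correct and follows exactly the route the paper intends: the paper derives this corollary "immediately" from Lemma~\ref{lemma circ eq. fregiu}, with the same reasoning that $\cov$\textbf{-FCov} is a full subcategory of \textbf{FCov} and that the lemma supplies essential surjectivity of the inclusion by providing, for any formal cover $(S,\cov,\circ)$, an isomorphism with $(S,\cov,\fregiu)$.
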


Finally,  one can show that  the inclusion functor from {\bf FCov} to {\bf BCov}
reflects isomorphisms:
\begin{proposition}
\label{fcisoasbciso}
Assume that $r:\S \to \S'$ is an isomorphism in {\bf BCov} with inverse $s$.  
Then $\S$ is a formal cover if and only if $\S'$ is a formal cover.
In this case, $r$ and $s$ form an isomorphism also in  {\bf FCov}.
\end{proposition}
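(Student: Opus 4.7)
The plan is to reduce the biconditional to a property of the associated suplattice $Sat(\A)$, namely being a frame, and then to observe that a \textbf{BCov}-isomorphism automatically transfers such a property.

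By Lemmas \ref{lemma uniqueness of circ for formal covers} and \ref{lemma circ eq. fregiu}, the basic cover $(S,\cov)$ admits the structure of a formal cover if and only if $(S,\cov,\fregiu)$ is one; and by the definition of formal cover together with Lemma \ref{equivcircmeet}, this is in turn equivalent to $Sat(\A)$ being a frame, i.e., to binary meets distributing over arbitrary joins in $Sat(\A)$. So the first step is to reduce the claim to showing that $Sat(\A)$ is a frame iff $Sat(\A')$ is a frame.

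By Proposition \ref{prop. BCov dual SupLat}, the pair $(r,s)$ corresponds to an isomorphism of suplattices between $Sat(\A')$ and $Sat(\A)$. As a sup-preserving bijection between complete lattices it is automatically an order isomorphism, and hence preserves arbitrary meets (in particular the binary ones, computed by (\ref{meet in SatA})) and the top element. The frame distributive law is then preserved in either direction, settling the first half.

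For the second half, under the assumption that both $\S$ and $\S'$ are formal covers I would take $\circ=\fregiu$ and $\circ'=\fregiu'$ by Lemma \ref{lemma circ eq. fregiu}, and then verify the two extra conditions of a formal cover map for $r$ (and symmetrically for $s$): the preservation of binary meets by the induced suplattice iso yields $\A r^-(\A' a\cap\A' b)=\A(\A r^-a\cap\A r^-b)$, i.e., $r^-(a\fregiubi' b)=_\A r^-a\fregiubi r^-b$; and the preservation of the top yields $r^-T=_\A S$. The only non-mechanical ingredient is the well-known fact that sup-preserving bijections between complete lattices are automatically complete lattice isomorphisms; once that is invoked, the rest is just unfolding definitions.
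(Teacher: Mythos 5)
Your proposal is correct and follows essentially the same route as the paper's proof: pass to the induced suplattice isomorphism between $Sat(\A)$ and $Sat(\A')$, invoke the order-theoretic fact that it automatically preserves meets to transfer distributivity, and then unfold meet-preservation (with meets given by $\fregiu$, $\fregiu'$) into the convergence condition for $r$ and $s$. The only differences are cosmetic: you justify meet-preservation via the fact that a sup-preserving bijection is an order isomorphism where the paper cites the adjointness of $\A r^-$ and $\A' s^-$, and you explicitly verify totality $r^-T=_\A S$, which the paper leaves implicit.
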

\begin{proof}
The assumption means that the maps $\A r^-$ and $\A' s^-$ form a suplattice isomorphism between
$Sat(\A)$ and $Sat(\A')$, and hence in particular an order-isomorphism. 
By a general fact of
order theory,   both $\A r^-$ and $\A' s^-$ preserve  meets, besides joins;
 in fact, both $\A r^-$ is left adjoint to $\A' s^-$ and vice versa.
 So $Sat(\A)$ satisfies distributivity iff so does $Sat(\A')$, that is, $\S$ is a formal cover iff so is $\S'$.
 When $\S$ and $\S'$ are formal covers, meets are presented by $\fregiu$ and $\fregiu'$, that is, 
 $\A U \cap \A V = \A (U \fregiubi V)$ and  $\A' W \cap \A' Z = \A' (W \fregiubi Z)$.
 It is immediate to see that $\A r^-$ preserves meets means precisely that $r$ is convergent. Similarly for $s$. 
\end{proof}

\subsection{$\leq$-formal covers}

The approach via $\cov$-formal covers can appear as the most general
since it describes the meet of $Sat(\A)$ in terms of $\fregiu$ and
so, in the end, by means of the cover itself. However, it has a serious
drawback: to generate $\cov$ inductively one cannot use  axioms or
rules  involving $\fregiu$, at least constructively. In fact, the
operation $\fregiu$ is not well-defined until the process of
generation of $\cov$ is completed. Thus one has to modify the
presentation. One possibility is to approximate the meet by means of
a primitive operation $\circ$, as in the present paper, and only at
the end of the generation process one sees that $\circ$ and
$\fregiu$ coincide (up to $=_\A$).

Another way to modify the definition of $\cov$-formal cover in order to
include inductively generated examples is to use the definition in
\cite{ftopposets,cssv}. The idea is to start from a preordered base and hence
to define  $\fregiu$ depending on the preorder rather than on $\cov$. The
resulting notion is here called a $\leq$\emph{-formal cover}.
The inductive generation of $\leq$-formal covers corresponds to that
in \cite{Vickers-Johnstone} via {\it generators} (represented by the
preordered set of basic opens) and {\it relations} (the starting
axiom-set). This approach turned out to be crucial to describe
algebraic domains as \emph{unary} $\leq$-formal covers (see
\cite{sambin-domains as formal topologies}).

\begin{definition} A \emph{$\leq$-formal cover}
 is a basic cover whose carrier $S$ is equipped with a
preorder $\leq$ such that:
$$
\infer[\leq\textrm{-left}\qquad\textrm{and}]{a\cov U}{a\leq b & b\cov
U}\qquad\infer[\leq\textrm{-right}]{a\cov U\fregiubi_{\leq} V}{a\cov
U & a\cov V}
$$ 
where: $U\fregiubi_{\leq} V$ $=$
$\bigcup_{u\releps U,\, v\releps V}(u\fregiubi_{\leq} v)$ and
$u\fregiubi_{\leq} v$ $=$ $\{a\in S$ $|$ $a\leq u$ $\land$ $a\leq
v\}$.
\end{definition}

Clearly, for every $\leq$-formal cover, the structure
$(S,\cov,\fregiu_\leq)$  is a formal cover. Actually, the original   $\leq$-formal cover 
can be identified with $(S,\cov,\fregiu_\leq)$.
\begin{definition}
Let $\leq${\bf-FCov} be the full subcategory of {\bf FCov} whose
objects are $\leq$-formal covers.
\end{definition}
Every $\cov$-formal cover $(S,\cov,\fregiu)$ is clearly also a 
 $\leq$-formal cover, with $\leq$ defined as in (\ref{def. <= indotto}).
Conversely,  
every $\leq$-formal cover $(S,\cov,\fregiu_\leq)$ is isomorphic to the
$\cov$-formal cover $(S,\cov,\fregiu)$ by lemma~\ref{lemma circ eq. fregiu}.
 Therefore we can conclude that:
\begin{proposition}
The category $\leq${\bf-FCov} is equivalent to $\cov${\bf-FCov}
and hence to {\bf FCov}.
\end{proposition}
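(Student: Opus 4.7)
The plan is to leverage the equivalence $\cov$\textbf{-FCov}$\simeq$\textbf{FCov} already established in the previous corollary, and to show separately that $\leq$\textbf{-FCov} is equivalent to \textbf{FCov}. The statement then follows by composing equivalences.

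The key observation is that $\leq$\textbf{-FCov} is by definition a full subcategory of \textbf{FCov}, so the inclusion functor is automatically fully faithful. It only remains to establish essential surjectivity. Given an arbitrary formal cover $(S,\cov,\circ)\in\textbf{FCov}$, by lemma \ref{lemma circ eq. fregiu} we have $\circ=_\A\fregiu$, so $(S,\cov)$ is a $\cov$-formal cover. Equipping $S$ with the preorder from~(\ref{def. <= indotto}), namely $a\leq b\Leftrightarrow a\cov b$, gives a $\leq$-formal cover, as noted in the paragraph just before the proposition: $\leq$-left is transitivity of $\cov$, while $\leq$-right follows from the $\cov$-formal cover property since $\fregiu_\leq=\fregiu$ for this choice of $\leq$. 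The original $(S,\cov,\circ)$ and the resulting $(S,\cov,\fregiu_\leq)$ share the same underlying basic cover, so they are isomorphic in \textbf{FCov} via the identity relation on $S$ by lemma \ref{lemma uniqueness of circ for formal covers}. Hence the inclusion is essentially surjective, and $\leq$\textbf{-FCov}$\simeq$\textbf{FCov}. Combining with the previous corollary yields $\leq$\textbf{-FCov}$\simeq\cov$\textbf{-FCov}$\simeq$\textbf{FCov}.

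The only subtlety worth mentioning is that although the preorder $\leq$ is structure on a $\leq$-formal cover (rather than a property derivable from $\cov$), morphisms in $\leq$\textbf{-FCov} are just formal cover maps and do not need to explicitly preserve $\leq$; this is what allows the identity relation to witness the isomorphism in the essential-surjectivity step. I do not expect a serious obstacle, since all the technical ingredients are already supplied by lemma \ref{lemma circ eq. fregiu} (forcing $\circ=_\A\fregiu$ in any formal cover) and lemma \ref{lemma uniqueness of circ for formal covers} (uniqueness up to isomorphism of the operation presenting a formal cover on a fixed basic cover); the rest is bookkeeping.
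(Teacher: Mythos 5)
Your proposal is correct and uses essentially the same ingredients as the paper: the paper observes that every $\cov$-formal cover is a $\leq$-formal cover for the induced preorder and that, by lemma \ref{lemma circ eq. fregiu}, every $\leq$-formal cover is isomorphic (over the same basic cover) to a $\cov$-formal cover, which is just the mirror image of your essential-surjectivity argument for the inclusion $\leq$\textbf{-FCov}$\hookrightarrow$\textbf{FCov}. The only difference is bookkeeping — you establish $\leq$\textbf{-FCov}$\simeq$\textbf{FCov} directly and compose with the earlier corollary, while the paper compares $\leq$\textbf{-FCov} with $\cov$\textbf{-FCov} directly — and your remark that morphisms need not preserve $\leq$ is exactly the point that makes the identity relation work.
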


Note that the notion of $\leq$-formal cover \emph{does not admit a
generalization to quantales} because convergence expressed via the
operation $\fregiu_\leq$ already enjoys weakening and contraction.

\subsection{Formal covers with a monoid operation}

In the original definition~\cite{IFS} of  formal topology,  later revised in \cite{somepoints}, convergence
was defined by means of a primitive binary operation between basic
opens. 
This approach to pointfree topology turned out to be crucial in
order to represent in a predicative way  formal topologies on
algebraic structures \cite{peter}, Stone spaces~\cite{finitary
topologies,saranegri}, Scott domains~\cite{SVV} and exponentiations
between Scott domains~\cite{esc}. In the terminology of the present
paper, the notion of formal cover in~\cite{somepoints} can be rephrased as:

\begin{definition}
A \emph{$\bullet$-formal cover} is a formal cover $(S,\cov,\circ)$
together with a binary operation $\bullet\ :\ S\times S$
$\rightarrow$ $S$ on elements of $S$ such that $a\circ b$ $=$
$\{a\bullet b\}$ for all $a,b\in S$.
\end{definition}

\begin{definition}
We call $\bullet${\bf-FCov}  the full subcategory of {\bf FCov} whose
objects are $\bullet$-formal covers.
\end{definition}

We are now going to prove that the notion of $\bullet$-formal cover
is equivalent to that of $\cov$-formal cover (and hence to all the
other notions). We need the following

\begin{lemma}\label{lemma dottification}
For every basic cover $\S$ = $(S,\cov)$, there exists a basic cover
$Dot(\S)$ such that:
\begin{enumerate}
\item $Dot(\S)$ is isomorphic to $\S$ in {\bf BCov};
\item the carrier of $Dot(\S)$ is naturally endowed with a binary
operation $\bullet$;
\item $\S$ is a $\cov$-formal cover if and only if $Dot(\S)$
is a $\bullet$-formal cover; in that case they are isomorphic as
$\cov$-formal covers.
\end{enumerate}
\end{lemma}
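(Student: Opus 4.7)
The plan is to build $Dot(\S)$ on the free-list construction over $S$: take $S^+$ to be the set of nonempty finite lists of elements of $S$, let $\bullet$ be list concatenation, and embed $S^+$ into $\P(S)$ by the map $\iota$ sending $[a_1,\ldots,a_n]$ to $\A a_1\cap\cdots\cap\A a_n$. By~(\ref{meet in SatA}) each $\iota(L)$ is a formal open of $\S$, and an immediate inspection gives the structural identity $\iota(L\bullet M)=\iota(L)\cap\iota(M)$; this is the one non-trivial design choice in the proof, chosen precisely so that $\bullet$ realises the meet of formal opens syntactically. Extending $\iota$ to subsets by $\iota^-(V)=\bigcup_{M\releps V}\iota(M)$, I declare $L\cov' V\Leftrightarrow\iota(L)\cov\iota^-(V)$; reflexivity and transitivity of $\cov'$ transfer routinely from the corresponding properties of $\cov$, yielding a basic cover on $S^+$ together with the binary operation $\bullet$ on its carrier.

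For claim~1 I shall exhibit basic cover maps $r\sub S\times S^+$ and $s\sub S^+\times S$ given by $a\,r\,L\Leftrightarrow a\releps\iota(L)$ and $L\,s\,a\Leftrightarrow L=[a]$, and verify they are mutually inverse in $\mathbf{BCov}$. That $r$ respects covers is literally the defining equivalence of $\cov'$, and that $s$ does so reduces to unfolding $[a]\cov' V$ as $\A a\cov\iota^-(V)$. Computing compositions, $(s\circ r)^-\{b\}=\A b=_\A\{b\}$, while $(r\circ s)^-\{M\}=\{[a]:a\releps\iota(M)\}$; the latter is $=_{\A'}\{M\}$ because $\iota(M)\cov\bigcup_{a\releps\iota(M)}\A a$ holds by reflexivity, and conversely $\iota([a])=\A a\sub\iota(M)$ for each $a\releps\iota(M)$ gives $[a]\cov'\{M\}$.

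For claim~3, when $\S$ is a $\cov$-formal cover I equip $Dot(\S)$ with the operation $\circ'$ specified on singletons by $\{L\}\circ'\{M\}=\{L\bullet M\}$ and extended to arbitrary subsets via item~1 of Proposition~\ref{prop. charact. formal cover}. Weakening $L\bullet M\cov' L$ collapses to $\iota(L\bullet M)\sub\iota(L)$, and $\circ'$-right collapses to the assertion that $\iota(N)\sub\iota(L)\cap\iota(M)=\iota(L\bullet M)$ implies $N\cov'\{L\bullet M\}$---both immediate from the structural identity and the saturation of $\iota(L)$ and $\iota(M)$. Thus $Dot(\S)$ is a $\bullet$-formal cover. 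For the converse, if $Dot(\S)$ is a $\bullet$-formal cover then Proposition~\ref{fcisoasbciso} applied to the $\mathbf{BCov}$-isomorphism of claim~1 makes $\S$ a formal cover, and Lemma~\ref{lemma circ eq. fregiu} then forces its operation to coincide with $\fregiu$, so $\S$ is a $\cov$-formal cover; the same Proposition~\ref{fcisoasbciso} promotes $r$ and $s$ to mutually inverse $\cov$-formal cover maps. I expect the only real subtlety to be setting up $S^+$, $\iota$, and $\bullet$ correctly so that the identity $\iota(L\bullet M)=\iota(L)\cap\iota(M)$ holds on the nose; once this alignment is in place, every other verification is bookkeeping inside $\S$.
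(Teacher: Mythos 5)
Most of your construction coincides with the paper's: lists over $S$ with concatenation as $\bullet$, the embedding $\iota(L)=\A a_1\cap\cdots\cap\A a_n$ (the paper's $r^-$), the cover $L\cov' V\Leftrightarrow\iota(L)\cov\iota^- V$, and the transfer arguments via Proposition~\ref{fcisoasbciso} and Lemma~\ref{lemma circ eq. fregiu}; items 1 and 2 and the converse half of item 3 are sound (your $s$ with $s^-b=\{[b]\}$ equals the paper's inverse $r'$ as a morphism, and using nonempty lists is a harmless variant). The genuine gap is in the forward half of item 3, in your verification of $\circ'$-right. Condition 2 of Proposition~\ref{prop. charact. formal cover} quantifies over arbitrary subsets $U,V\sub S^{+}$: from $N\cov' U$ and $N\cov' V$ one must get $N\cov' U\circ' V$. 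You only treat the instance where $U,V$ are singletons; there the hypothesis $N\cov'\{L\}$ does unfold to the inclusion $\iota(N)\sub\iota(L)$ (because $\iota(L)$ is saturated), so that instance is trivially true \emph{for every basic cover}, and indeed your argument never uses the hypothesis that $\S$ is a $\cov$-formal cover. But the singleton instance does not imply the general one: $\iota^- U$ is a union of saturated sets, hence not saturated in general, so $N\cov' U$ is not an inclusion, and passing from ``covered by $\iota^- U$ and by $\iota^- V$'' to ``covered by $\bigcup_{L\releps U,\,M\releps V}\bigl(\iota(L)\cap\iota(M)\bigr)$'' is exactly where convergence of $\S$ must enter. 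If your collapsed statement sufficed, $Dot(\S)$ would be a $\bullet$-formal cover for \emph{every} basic cover $\S$ (weakening, commutativity and associativity of $\bullet$ always hold), and then item 1 together with Proposition~\ref{fcisoasbciso} would make every basic cover a formal cover, i.e.\ every suplattice a frame --- absurd, and in contradiction with the ``only if'' part of item 3 itself.

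The repair is the paper's argument, and it is short. Assume $\S$ is a $\cov$-formal cover and let $N\cov' U$ and $N\cov' V$. For each $a\releps\iota(N)$ one has $a\cov\iota^- U$ and $a\cov\iota^- V$, hence $a\cov(\iota^- U)\fregiubi(\iota^- V)$ by $\fregiu$-right in $\S$. Now any $c\releps(\iota^- U)\fregiubi(\iota^- V)$ satisfies $c\cov u$ for some $u\releps\iota(L)$ with $L\releps U$ and $c\cov v$ for some $v\releps\iota(M)$ with $M\releps V$; since $\iota(L)$ and $\iota(M)$ are saturated, $c\releps\iota(L)\cap\iota(M)=\iota(L\bullet M)\sub\iota^-(U\circ' V)$. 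By transitivity $N\cov' U\circ' V$, which is $\circ'$-right. (Equivalently, as in the paper: show $L\fregiubi' M\cov'\{L\bullet M\}$ using $\fregiu$-right of $\S$, and combine this with $\fregiu'$-right of the underlying basic cover of $Dot(\S)$, which is available through Proposition~\ref{fcisoasbciso}.) With this step supplied, the rest of your proof goes through.
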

\begin{proof}
Let $List (S)$ be the set of all finite lists of elements of $S$. As
usual $[\,]$ is the empty list and $[a_1,\ldots,a_n]$ is the list
whose elements are $a_1,\ldots,a_n\in S$. Let $\bullet$ denote
concatenation between lists; we extend $\bullet$ also to subsets  $K,L\subseteq List(S)$ in the
following way:
$$
K\bullet L=\{k\bullet l\ |\ k\releps K,\ l\releps L\}.
$$
Let $r$ be the relation between $S$
and $List(S)$ defined by:
$$
b\, r\,[a_1,\ldots,a_n]\ \Leftrightarrow\ (b\cov a_1)\land\cdots \land (b\cov a_n)
\qquad\textrm{and}\quad b\,r\,[\,]\quad\textrm{true}
$$ 
for all
$a_1,\ldots,a_n\in S$. In other words, we have $r^- [a_1,\ldots
,a_n]$ = $\A a_1\cap\ldots\cap\A a_n$ = $a_1\fregiubi\ldots\fregiubi
a_n$ and $r^- [\,]$ = $S$. Finally, let $l\cov' K$ be $r^-l\cov r^-K$, for all $l\in
List(S)$ and $K\sub List(S)$. We put $Dot(\S)$ =
$(List(S),\cov',\bullet)$. This completes the definition of
$Dot(\S)$ (and proves $2$). 

It is easy to check that $\cov'$ is a basic cover. By the very definition 
of $\cov'$, the relation $r$ becomes a morphism in
{\bf BCov} from $(S,\cov)$ to $(List(S),\cov')$. We check that this
is an isomorphism by defining its inverse. Consider the relation
$r'$ defined by $[a_1,\ldots,a_n]\, r'\,b$ if
$a_1\fregiubi\ldots\fregiubi a_n\cov b$ and by $[\,]\,r'\,b$ if 
$S\cov b$. In other words, for $l\in List(S)$ and $b\in S$, one has
$l\,r'\,b$ iff $r^-l\cov r^-[b]$ iff $l\cov'[b]$. So
$(r')^-b=\A'[b]=_{\A'}[b]$. Since $r$ is a basic cover map, it
follows that $r^-(r')^-b=_\A r^-[b]=_\A b$ and hence $r^-(r')^-U=_\A
U$ for all $U\sub S$. As a consequence, $r'$ is a basic cover map
from $(List(S),\cov')$ to $(S,\cov)$; in fact, if $a\cov U$, then
$r^-(r')^-a\cov r^-(r')^-U$, that is, $(r')^-a\cov'(r')^-U$ by
definition of $\cov'$. The equation $r^-(r')^-b=_\A b$ also shows
that $r'r=_\mathbf{BCov}id_{\S}$. It remains to be checked that
$rr'=_\mathbf{BCov}id_{Dot(\S)}$; for every list $l$,
$r^-(r')^-r^-l=_\A r^-l$ because $r'r=_\mathbf{BCov}id_{\S}$; so
$(r')^-r^-l=_{\A'} l$ by definition of $\A'$.
Summing up, $r$ is an isomorphism (with inverse $r'$) of basic
covers. This completes the proof of item 1. 

Because of proposition~\ref{fcisoasbciso}, to obtain 3 it is sufficient to show that
$Dot(\S)$ is a $\bullet$-formal cover iff $(List(S), \cov')$
is a $\cov$-formal cover.
One direction follows from lemma~\ref{lemma circ eq. fregiu}.
Conversely, first note that $r^-(k\bullet l)=
(r^-k)\fregiubi(r^-l)$ so that
  $\bullet$ satisfies weakening, associativity and commutativity.
Hence to prove that
 $Dot(\S)$ is a $\bullet$-formal cover it is sufficient to show that
 $\bullet$-right holds. To this aim, since $\fregiu'$-right holds, it is sufficient to show that
 $k \fregiubi' l \cov k \bullet l$.
So let $m\releps k\fregiubi' l$, that is, $m\cov'k$
and $m\cov'l$. This means $r^-m\cov r^-k$ and $r^-m\cov r^-l$. So
$r^-m\cov(r^-k)\fregiubi(r^-l)$ since  $\S$ satisfies
$\fregiu$-right by proposition~\ref{fcisoasbciso}. This is precisely $r^-m\cov r^-(k\bullet l)$, that
is, $m\cov' k\bullet l$. Hence $k\fregiubi' l$ $\cov'$ $k\bullet l$.
\end{proof}

\begin{corollary}
The category $\bullet${\bf -FCov} is equivalent to $\cov${\bf -FCov} and hence
also to {\bf FCov} and $\leq${\bf -FCov}.
\end{corollary}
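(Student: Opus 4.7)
The plan is to deduce the equivalence from the preceding results by showing that the full subcategory inclusion $\bullet\textbf{-FCov} \hookrightarrow \textbf{FCov}$ is essentially surjective; since $\textbf{FCov}$ is already known to be equivalent to both $\cov\textbf{-FCov}$ and $\leq\textbf{-FCov}$, this yields all three equivalences at once. Because $\bullet\textbf{-FCov}$ is declared a \emph{full} subcategory of $\textbf{FCov}$, full-faithfulness is immediate, and there is nothing to verify on morphisms beyond what lemma \ref{lemma dottification} and proposition \ref{fcisoasbciso} already give.

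For essential surjectivity, I start from an arbitrary formal cover $\S = (S,\cov,\circ)$ and aim to exhibit an isomorphism in $\textbf{FCov}$ between $\S$ and some $\bullet$-formal cover. First I observe that by lemma \ref{lemma circ eq. fregiu} the operation $\circ$ coincides with $\fregiu$ modulo $=_\A$, so $\S$ is isomorphic in $\textbf{FCov}$ to the $\cov$-formal cover $(S,\cov,\fregiu)$. Then I apply lemma \ref{lemma dottification} to the underlying basic cover $(S,\cov)$: this produces $Dot(S,\cov) = (List(S),\cov',\bullet)$, which is isomorphic to $(S,\cov)$ in $\textbf{BCov}$ and, because $(S,\cov)$ is a $\cov$-formal cover, is a $\bullet$-formal cover by item 3 of that lemma.

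To promote this $\textbf{BCov}$-isomorphism to a $\textbf{FCov}$-isomorphism I invoke proposition \ref{fcisoasbciso}: a $\textbf{BCov}$-isomorphism between two objects both of which carry a formal cover structure is automatically a $\textbf{FCov}$-isomorphism. Composing the two isomorphisms gives an isomorphism in $\textbf{FCov}$ from $\S$ to the $\bullet$-formal cover $Dot(S,\cov)$, as wanted.

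I expect no real obstacle: all work has been packaged into lemma \ref{lemma dottification} (constructing $Dot$), lemma \ref{lemma circ eq. fregiu} (rigidity of $\circ$ for formal covers) and proposition \ref{fcisoasbciso} (reflection of isomorphisms). The only point that requires a moment's care is remembering that $\bullet\textbf{-FCov}$ is a full subcategory, so one does not need to construct a functor $Dot$ on morphisms explicitly for the purposes of the corollary: essential surjectivity plus fullness suffices. The final sentence then simply chains the resulting equivalence $\bullet\textbf{-FCov}\simeq \textbf{FCov}$ with the previously established equivalences $\cov\textbf{-FCov}\simeq \textbf{FCov}$ and $\leq\textbf{-FCov}\simeq \cov\textbf{-FCov}$.
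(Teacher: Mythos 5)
Your argument is correct and is essentially the paper's own: the corollary is meant to follow directly from Lemma \ref{lemma dottification} (whose item 3 already packages the isomorphism, proved there precisely via Proposition \ref{fcisoasbciso}) together with Lemma \ref{lemma circ eq. fregiu}, giving essential surjectivity of the full inclusion $\bullet$\textbf{-FCov} $\hookrightarrow$ \textbf{FCov}, and then chaining with the previously established equivalences. Nothing is missing.
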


By unfolding the equivalence between $\bullet${\bf -FCov} and
$\leq${\bf -FCov} one can deduce that a $\bullet$-formal cover
$(S,\cov,\bullet)$ is identified with the $\leq$-formal cover
$(S,\cov,\leq)$ where $a\leq b$ is $a\cov b$. However, in many cases
there exists a way to construct a $\leq$-formal cover
corresponding to a given $\bullet$-formal cover without using the
cover $\cov$ in the definition of $\leq$. For instance, if the
operation $\bullet$ on $S$ is associative (and not just associative
\emph{modulo} $=_\A$), then one can define a preorder on $S$ by:
$a\leq_m b$ if 
either $a=b$ or $a=l\bullet b$ or $a=b\bullet r$ or
$a=l\bullet b\bullet r$ for some $l,r\in S$.\footnote{In \cite{bs}
a similar
definition is given under the further assumption that
$\bullet$ is
commutative.}This is the smallest preorder making both $a\bullet b\leq_m a$ and
$a\bullet b\leq_m b$ true. In particular, $a\bullet b\releps
a\fregiubi_{\leq_m} b$ and hence $\leq$-right follows from the
corresponding property for $\bullet$. Moreover, $a\leq_m b$ yields
$a\cov b$; hence $\leq_m$-left holds and so also $a\fregiubi_{\leq_m} b\cov
a\bullet b$. Summing up, $(S,\cov,\leq_m)$ is a $\leq$-formal cover
and $a\fregiubi_{\leq_m} b=_\A a\bullet b$; hence $(S,\cov,\bullet)$ and
$(S,\cov,\leq_m)$ are isomorphic in {\bf FCov} by 
lemma~\ref{lemma uniqueness of circ for formal covers}.

We can also prove that the notions of formal cover presented here are 
essentially equivalent to the original notion in \cite{IFS}, where the base
is required to be a semilattice. There are several ways to see this. 
For instance,  given a $\cov$-formal cover $(S,\cov,\fregiu)$, one can modify 
the proof of lemma~\ref{lemma dottification} by taking $\P_\omega(S)$, 
the set of finite subsets of $S$ (see~\cite{finiteness}), instead of $List(S)$ 
and $\cup$ instead  of concatenation. By adapting the definition of the cover 
in the obvious way, one gets a formal cover $(\P_\omega(S),\cov',\cup)$ 
which is isomorphic to the given one and, moreover, whose base is a semilattice.

\subsection{Connection with other presentations of quantales}

In \cite{pretopologies} the third author introduced the notion of a
\emph{pretopology} in order to give constructive semantics for a
class of linear-like  logics. The same notion was used
in \cite{bs} to give a presentation of unital commutative quantales.

In our notation, a pretopology is essentially a unital convergent
cover such that $a\circ b$ is a singleton for all $a,b$. So
pretopologies form a category, say $\bullet${\bf-uCBCov}, which is
to {\bf uCBCov} as $\bullet${\bf-FCov} is to {\bf FCov}. By suitably
modifying lemma \ref{lemma dottification} (in particular, by replacing 
$\fregiu$ with $\circ$), it is possible to show
that $\bullet${\bf-uCBCov} and {\bf uCBCov} are equivalent.

\subsection{Remark on the unary and finitary cases}

When we pass to consider the unary or finitary  case, the connection
between  the different definitions  of formal
covers changes considerably.
Recall that a cover is \emph{finitary} if for all $a$ and $U$ with $a\cov U$, there exists
a finite subset $K$ of $U$ such that $a\cov K$.
A finitely cover is 
\emph{unary} if the subset $K$ in the definition has at most one element.

For instance, the equivalence between $\leq$-formal covers and
$\bullet$-formal covers no longer holds if we restrict to
their unary or finitary versions. Indeed, unary $\leq$-formal covers
are presentations of algebraic domains while unary $\bullet$-formal
covers present Scott domains (see \cite{sambin-domains as formal
topologies} and \cite{SVV}). The formal-topological presentation of
such classes of domains allows to see why Scott domains are closed
under exponentiation whilst algebraic domains are not. Indeed,
looking at the construction in \cite{MV04} of the exponential object
from an algebraic domain to an inductively generated formal cover
(in the category of inductively generated formal covers), one can
see that the exponentiation of two algebraic domains is not an
algebraic domain, in general.

Concerning the finitary case, we know that Stone spaces correspond
to finitary $\bullet$-formal covers (see \cite{saranegri} and
\cite{finitary topologies}). Similar characterizations of the
finitary versions of the other presentations are still unknown. In
particular, it is not clear what structures finitary $\cov$-formal covers
represent.

\section{Categorical reading of inductive generation}\label{catind}

In this section, we provide a categorical analysis of our modular
method for generating basic covers, convergent covers and formal
covers as in sections \ref{bcovi}, \ref{indqu} and \ref{inductive
generation formal covers}. We clarify in what sense the generation
of a formal cover or of a convergent cover from a given axiom-set is
{\it free}. This requires finding suitable categories in such a way
that our inductive generation processes provide object parts of 
right adjoints to suitable forgetful functors (right adjoints become
left adjoints, as expected, if one works in the opposite categories
following the direction of frame maps).

There exists a well known construction of the free frame over a
suplattice as a consequence of Johnstone's coverage theorem
\cite{stone spaces} (see also  theorem 4.4.2 in \cite{vickers}).
Here we consider this construction in the corresponding dual
categories, and hence we speak of the (co)free formal cover over an
inductively generated basic cover. We decompose it into three steps
as follows.
\begin{list}{-}{}
\item 
First, we use the base $S$ of the given basic cover to
construct a new inductively generated basic cover  $O(S)$, with different  
base and axiom-set,
naturally equipped with a pre-convergence operation on
subsets and a distinguished subset (that will
become the convergence operation and the unit of a quantale, respectively, in a later step).
\item
 Then, as explained in proposition \ref{prop. ind. gen. of
convergent covers}, we localize the  axiom-set of  $O(S)$ and generate a
unital convergent cover, that is a unital commutative quantale.
\item Finally, we add
the axioms of weakening and contraction (see proposition \ref{prop.
ind. gen. of formal covers}) thus obtaining a formal cover.
\end{list}

 These three steps give rise to three adjunctions that  are all in the form of a right adjoint to a forgetful
functor  $U$; the object part of each right adjoint is provided by one of
our methods of inductive generation.\footnote{A
similar decomposition holds also by taking the non commutative case
of $\circ$-basic covers and unital convergent covers, that is,
unital (not necessarily commutative) quantales.}
$$  
\def\objectstyle{\scriptstyle}
 \def\labelstyle{\scriptstyle}
 {\xymatrix@+1pc{
{\mbox{\bf BCov$_{i}$}}\ar@<-1.5ex>[r]_O^{\bot} & {\mbox{\bf
BCov$_{\circ,i}$}} \ar@<-1.5ex>[r]_{Q}^{\bot}\ar@<-1.5ex>[l]_{U} &
{\mbox{\bf uCBCov$_{i}$}}\ar@<-1.5ex>[l]_{U}
 \ar@<-1.5ex>[r]_{\quad L}^{\quad\bot} & \mbox{\bf FCov$_i$}\ar@<-1.5ex>[l]_{\quad U} }} 
 $$
The novelty of our decomposition lies in introducing a subcategory {\bf BCov}$_\circ$
of {\bf BCov}  whose objects include those
built in the first step of the above generation process.
The objects of  {\bf BCov}$_\circ$, called $\circ$-basic covers, are  basic
covers equipped with an operation on subsets of the
base that distributes over unions,  is associative, commutative and
 has a unit. This we call a pre-convergence operation; in fact, it enjoys 
 all  properties of a convergence operation (section~\ref{section convergent covers}) but localization.
  The morphisms of {\bf BCov}$_\circ$ are basic cover maps
preserving the pre-convergence operation and the units. 
$\circ$-basic covers represent the starting data
from which we can generate a formal or convergent cover.
 By using {\bf BCov}$_\circ$ we are able to recognize our inductively generated
formal covers and convergent covers as free structures. Indeed, in the
generation of formal covers, the category {\bf BCov}$_\circ$  plays a role 
analogous to that played by semilattices in Johnstone's coverage
theorem.

As we will see, the functor $Q$ is a categorical rendering of proposition \ref{prop.
ind. gen. of convergent covers} and lemma \ref{lemma continuity for
generated convergent covers}. A similar remark applies to the
functor L with respect to proposition \ref{prop. ind. gen. of formal
covers} and lemma \ref{lemma continuity for generated formal
covers}. The object part of the composition $L\cdot Q\cdot O$
coincides impredicatively with the construction of the free frame
over a suplattice. Impredicatively, the functor $Q\cdot L$ is
surjective on objects since \emph{every} locale is co-freely generated from a
$\circ$-basic cover. On the contrary, the functor $ L\cdot Q\cdot O$ is not
surjective on objects, since not every frame is free over
some suplattice.

Now we start by introducing the category of $\circ$-basic covers:
\begin{definition}
\label{bcirc} A $\circ$\emph{-basic cover} $\S=(S,\cov,\circ)$ is a
basic cover $(S,\cov)$ with an operation $\circ$ on subsets of $S$
such that
\begin{list}{-}{ }
\item  $\circ$ is distributive over unions  modulo $=_\A$, that is,
$\bigcup_{i\in I}(U_i\circ V)\ =_\A\ (\bigcup_{i\in I} U_i)\circ V$;
\item
$\circ$ is associative modulo $=_\A$, that is, $(U\circ V)\circ W\
=_\A\ U\circ(V\circ W)$;
\item $\circ$ is commutative modulo
$=_\A$, that is, $U\circ V\ =_\A\  V\circ U$;
\item there exists a unit, that is, a subset $I$ such that
$a\  =_\A \ a\circ I$
 for all $a\in S$.
\end{list}

Given two $\circ$-basic covers $\S$ and $\T$, a $\circ$\emph{-basic
cover map} from $\S$ to $\T$ is a basic cover map $r:(S,\cov_\S)\longrightarrow(T,\cov_\T)$ preserving 
operation and unit as follows:
$$r^-(U\circ_{\T} V)\ =_\A\ (r^-U)\circ_{\S}
(r^-V)\qquad\textrm{and}\qquad r^-{I_{\T}}\ =_\A I_{\S}$$ for all
$U,V\sub T$.
\end{definition}

It is easy to check that $\circ$-basic covers and $\circ$-basic
cover maps form a category. We say that a $\circ$-basic cover is
inductively generated when so is its underlying basic cover.

\begin{definition}
We call {\bf BCov$_{\circ}$} the category of $\circ$-basic covers
with $\circ$-basic cover maps. {\bf BCov$_{\circ,\, i}$} is the
full subcategory of {\bf BCov$_{\circ}$} whose objects are
inductively generated.
\end{definition}

These subcategories of basic covers are not relevant \emph{per se},
in the sense that they do not correspond to specific algebraic
structures. Their role is just that of explaining the universal
property of inductive generation for formal covers. 

Here we prove that there exists a functor $O$, from the
category of inductively generated basic covers to its subcategory of
$\circ$-basic covers, that is right adjoint to the corresponding forgetful
functor. This means that we can build the (co)free $\circ$-basic
cover generated from a basic cover:
\begin{proposition}\label{prop. adjunction1}
The forgetful functor from {\bf BCov$_{\circ,\,i}$} to {\bf
BCov$_i$}
 has a right adjoint
$$O:\ \mbox{\bf BCov$_{i}$}\, \longrightarrow \, \mbox{\bf BCov$_{\circ,\,i}$}\ .$$
\end{proposition}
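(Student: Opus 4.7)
The plan is to construct $O(\S)$ as a formal analogue of the free commutative monoid on $Sat(\A_\S)$, built at the level of the base: its carrier will be the set of finite multisets over $S$, and the cover will be inherited from $\S$. Concretely, given $\S = (S, \cov_\S)$ inductively generated by the axiom-set $(I,C)$, I would let $M(S)$ denote the set of finite multisets over $S$ (formally, lists modulo permutation, or equivalently finite-support functions $S \to \mathbb{N}$), with multiset union $\uplus$ extended pointwise to subsets as $K \uplus L = \{k \uplus l \mid k \releps K,\, l \releps L\}$, and with $\{[\,]\}$ as the distinguished unit subset. The cover $\cov_O$ on $M(S)$ is then inductively generated by the axiom-set $(J,D)$ that lifts $(I,C)$ to singleton multisets: $J([a]) := I(a)$ and $D([a], i) := \{[c] \mid c \releps C(a, i)\}$, with empty axiom-set on non-singleton multisets. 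Then $O(\S) := (M(S), \cov_O, \uplus)$.

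My first task would be to verify that $O(\S)$ is a $\circ$-basic cover in the sense of definition~\ref{bcirc}. The key observation is that all four required properties of $\uplus$ on $\P(M(S))$, namely distributivity over arbitrary unions, associativity, commutativity and the unit law, actually hold \emph{at the set level}, not merely modulo $=_\A$: they are inherited directly from the corresponding genuine identities for multiset union on $M(S)$. Consequently no extra axioms need to be built into $\cov_O$ to enforce them.

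Next I would construct the counit $\epsilon_\S : O(\S) \to \S$ in $\mathbf{BCov}_i$ by $\epsilon_\S^- a := \{[a]\}$ (i.e.\ $m \,\epsilon_\S\, a$ iff $m = [a]$), and verify that it respects covers by induction on the generation of $a \cov_\S V$: reflexivity is immediate, while in the infinity case the inductive hypothesis yields $\{[c] \mid c \releps C(a, i)\} \cov_O \{[v] \mid v \releps V\}$, which combined with the axiom $[a] \cov_O \{[c] \mid c \releps C(a, i)\}$ of $(J,D)$ gives $\{[a]\} \cov_O \{[v] \mid v \releps V\}$. For the universal property, given any $\circ$-basic cover $\T = (T, \cov_\T, \circ_\T, I_\T)$ and any basic cover map $s : U(\T) \to \S$, I would set
\[
\tilde{s}^- [a_1, \ldots, a_n] \; := \; s^- a_1 \circ_\T \cdots \circ_\T s^- a_n, \qquad \tilde{s}^-[\,] \; := \; I_\T,
\]
well-defined modulo $=_{\A_\T}$ by commutativity and associativity of $\circ_\T$. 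Lemma~\ref{lemma continuity for generated covers} reduces cover-preservation to the single check $\tilde{s}^-[a] \cov_\T \tilde{s}^- \{[c] \mid c \releps C(a,i)\}$, which unfolds to $s^- a \cov_\T s^- C(a,i)$ and holds because $s$ respects the original axioms; preservation of $\uplus$ and of the unit are immediate from the definition. Uniqueness of $\tilde{s}$ then follows because the equation $\epsilon_\S \cdot \tilde{s} =_{\mathbf{BCov}_i} s$ forces $\tilde{s}^-[a] =_{\A_\T} s^- a$ on singletons, and preservation of $\uplus$ and unit determines $\tilde{s}^-$ on every multiset.

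Functoriality of $O$ on morphisms and naturality of the bijection then follow by the standard adjunction recipe. The main obstacle I anticipate is essentially bookkeeping at the predicative level: handling $M(S)$ and the axiom-set $(J,D)$ as genuinely set-indexed data, and verifying cleanly that the set-theoretic identities for $\uplus$ on elements lift to the four $\circ$-basic cover conditions on subsets. No deeper conceptual difficulty arises, since the required free construction lives entirely at the level of the base $S$, amounting simply to taking the free commutative monoid $M(S)$ on $S$ and transporting the original cover along the canonical inclusion $a \mapsto [a]$.
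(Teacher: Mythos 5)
Your construction is essentially the paper's, with one substantive variation in the choice of carrier. The paper builds $O(\S)$ on $List(S)$ with concatenation $\bullet$ (the free \emph{monoid} operation, not commutative on the nose) and then forces commutativity modulo $=_\A$ by adding the axioms $l\bullet k\cov_{O(\S)}k\bullet l$ to the generated cover; everything else --- the counit $a\mapsto\{[a]\}$, the reduction of cover-preservation to the lifted axioms via lemma~\ref{lemma continuity for generated covers}, the forced definition $\widehat{r}^-[a_1,\ldots,a_n]=_{\A_\T} r^-a_1\circ_\T\cdots\circ_\T r^-a_n$ with $\widehat{r}^-[\,]=_{\A_\T}I_\T$, and the resulting uniqueness --- matches your argument step by step. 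Your variant replaces lists by finite multisets so that commutativity, associativity, unit and distributivity hold exactly and no extra cover axioms are needed. That buys a slightly cleaner object, but at precisely the cost the paper's choice is designed to avoid: predicatively, $M(S)$ is a quotient of $List(S)$ by permutation (for an arbitrary base $S$ there is no canonical representative), so you must either invoke quotient sets or drag a setoid structure through the whole construction, whereas the point of definition~\ref{bcirc} is that commutativity is only required \emph{modulo} $=_\A$, so the free monoid plus two families of cover axioms suffices and no quotient is ever formed.

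One step needs patching as written: $\tilde{s}^-[a_1,\ldots,a_n]:=s^-a_1\circ_\T\cdots\circ_\T s^-a_n$ is invariant under permutations only up to $=_{\A_\T}$, but to exhibit a morphism you must exhibit an actual relation, i.e.\ an honest subset $\tilde{s}^-m\sub T$ for each multiset $m$; ``well-defined modulo $=_{\A_\T}$'' does not by itself produce one. The repair is short --- declare $t\,\tilde{s}\,m$ iff there exists a list $[a_1,\ldots,a_n]$ representing $m$ with $t\releps s^-a_1\circ_\T\cdots\circ_\T s^-a_n$; this is permutation-invariant by construction and $=_{\A_\T}$-equal to any fixed product by associativity and commutativity of $\circ_\T$, and since morphisms are equivalence classes modulo $=_\A$ any such choice yields the same arrow --- but note that with the paper's list-based carrier the issue simply never arises, which is a second reason the authors prefer $List(S)$ over the free commutative monoid.
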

\begin{proof}
Let $\S$ be a basic cover inductively generated by an axiom-set $I,C$. 
To define
the \emph{(co)free} $\circ$-basic cover over $\S$, called 
$O(\S)$, we start from the base $List(S)$. For $l,k\in List(S)$, let 
$l\circ_{O(\S)} k$ be (the singleton whose element is) $l\bullet k$, the
concatenation of $l$ and $k$. Let the unit $I_{O(\S)}$ be (the
singleton whose element is) $[\ ]$, the empty list. Then the cover
of $O(\S)$ is the least basic cover $\cov_{O(\S)}$
such that:
\begin{itemize}
\item[-] $[a]\cov_{O(\S)}\{[u]\ |\
u\releps C(a,i)\}$ holds for every $a\in S$ and $i\in I(a)$ (where $[b]$, for $b\in
S$, denotes the list of length one whose element is $b$);
\item[-] $l\bullet k\cov_{O(\S)} k \bullet l$ for all $l,k\in
List(S)$;
\end{itemize}
(associativity, as well as $l=_{\A_{O(\S)}} l\bullet[\ ]$, holds automatically).

We need to show that, for every inductively generated basic cover
$\S$, a morphism $i_\S$ $:$ $O(\S)\, \longrightarrow\, \S$
exists in {\bf BCov$_{i}$} such that, for every $r$ $:$
$\T\longrightarrow\S$ in {\bf BCov$_{i}$} with $\T$ an inductively
generated $\circ$-basic cover, there exists a unique $\circ$-basic
cover map $\widehat{r}$ such that the following diagram in {\bf
BCov}$_{i}$ commutes.
$$\xymatrix{ & & \S \\
\T \ar@{->}[urr]^{r} \ar@{->}[rr]_{\widehat{r}} & & O(\S)
\ar@{->}[u]_{i_\S} }$$ Let $i_\S$ be the relation between $List(S)$
and $S$ defined by ${i_\S}^-a$ = $\{[a]\}$. This is a basic cover
map
$$i_\S\ :\ (List(S),\cov_{O(\S)})\ \rightarrow\ (S,\cov_\S)$$
by lemma \ref{lemma continuity for generated covers}; indeed,
$i_{\S}^-a\cov_{O(\S)} {i_\S}^-C(a,i)$ holds by the
definitions of $\cov_{O(\S)}$ and $i_\S$. To complete the
proof it is sufficient to give the definition of $\widehat{r}$,
which is actually
 compulsory. In fact, $\widehat{r}^-[a]$ $=_{\A_\T}$ $r^-a$
because the diagram must commute, $\widehat{r}^-[\ ]$ $=_{\A_\T}$
$I_{\S}$ because $\widehat{r}$ must preserve units and finally
$\widehat{r}^-[a_1,\ldots,a_n]$ =
$\widehat{r}^-([a_1]\bullet\ldots\bullet[a_n])$ $=_{\T}$
$\widehat{r}^-[a_1]\circ\ldots\circ\widehat{r}^-[a_n]$ $=_{\T}$
$r^-a_1\circ\ldots\circ r^-a_n$ because $\widehat{r}$ must respect
convergence. It remains to be proved that $\widehat{r}$ is a basic
cover map. Since $O(\S)$ is inductively generated, it is
sufficient to check that $\widehat{r}^-[a]\cov_\T\widehat{r}^-\{[b]\
|\ b\releps C(a,i)\}$, that is, $r^-a\cov_\T r^-C(a,i)$ which holds
because $r$ is a basic cover map.
\end{proof}

The right adjoint in the previous proposition represents the first
step to build the co-free formal cover generated from a basic cover. The
second step is to add the axioms making $O(\S)$ a unital convergent cover 
(unital commutative quantale). Also this step enjoys a
universal property, namely there exists a functor $Q$, from 
\mbox{\bf BCov$_{\circ,\,i}$} to its
subcategory \mbox{\bf uCBCov$_{i}$} of inductively generated unital
convergent covers, that is right adjoint to the corresponding forgetful functor.

\begin{proposition}\label{prop. adjunction2}
The forgetful functor from \mbox{\bf uCBCov$_{i}$} to {\bf
BCov$_{\circ,\,i}$} has a right adjoint
$$Q:\ \mbox{\bf BCov$_{\circ,\,i}$}\, \longrightarrow \, \mbox{\bf uCBCov$_{i}$}$$
that is surjective on objects.
\end{proposition}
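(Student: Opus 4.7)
\emph{Proof proposal.}
Given an inductively generated $\circ$-basic cover $\S=(S,\cov_\S,\circ_\S,I_\S)$ whose underlying basic cover is generated by an axiom-set $I,C$, the plan is to define $Q(\S)$ as the unital convergent cover obtained by invoking Proposition~\ref{prop. ind. gen. of convergent covers} with the axiom-set $I,C$ and the map $\delta(a,b):= a\circ_\S b$. This yields a cover $\cov_{Q(\S)}$ on $S$ (generated by the enlarged axiom-set $J',D'$ of Section~\ref{indqu}) and an operation $\circ_{Q(\S)}$ on $\P(S)$ extending $\delta$. Since $\S$ is a $\circ$-basic cover, $\circ_\S$ is already determined by its restriction to singletons modulo $=_{\A_\S}$, so $\circ_{Q(\S)}$ coincides with $\circ_\S$ modulo $=_{\A_{Q(\S)}}$; the unit equations $a =_{\A_\S} a\circ_\S I_\S$ then transfer to $Q(\S)$, so $I_\S$ remains a unit for $\circ_{Q(\S)}$.

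For the counit $\epsilon_\S\, :\, UQ(\S)\longrightarrow \S$, I take the identity relation on $S$. Since $\cov_\S\sub \cov_{Q(\S)}$, this is a basic cover map in the required direction, and it preserves both the $\circ$-operation and the unit because $a\circ_{Q(\S)}b = \delta(a,b) = a\circ_\S b$ and $I_{Q(\S)} = I_\S$ on the nose. To verify the universal property, fix $\T=(T,\cov_\T,\circ_\T,I_\T)$ in $\mathbf{uCBCov}_i$ and a $\circ$-basic cover map $r\, :\, U(\T)\longrightarrow \S$. The requirement $\epsilon_\S\cdot U(\widehat{r}) = r$ forces $\widehat{r}$ to coincide with $r$ as a relation from $T$ to $S$, so uniqueness is automatic. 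Existence reduces, by Lemma~\ref{lemma continuity for generated convergent covers}, to three conditions: $r^-a\cov_\T r^-C(a,i)$ (exactly the hypothesis that $r$ is a basic cover map $\T\to\S$); $r^-\delta(a,b)=_{\A_\T} r^-a \circ_\T r^-b$ (preservation of $\circ_\S$ on singletons, hence on all subsets by distributivity modulo $=_\A$); and $r^-I_{\S}=_{\A_\T} I_\T$ (preservation of units). All three are immediate from $r$ being a morphism of $\circ$-basic covers.

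For surjectivity on objects, given $\T\in\mathbf{uCBCov}_i$, I take $\S:=U(\T)$ inductively generated by any axiom-set $J,D$ that generates $\cov_\T$, and apply $Q$ to $\S$ with the map $\delta'(a,b):=a\circ_\T b$. The resulting axiom-set is obtained from $J,D$ by adding commutativity, associativity and locax clauses. On one hand, each of these new clauses is already a derivable cover in $\T$, since $\T$ is a unital convergent cover and $\delta'$ is the restriction of $\circ_\T$ to singletons; so by minimality $\cov_{Q(U(\T))}\sub \cov_\T$. On the other hand, $\cov_{Q(U(\T))}$ satisfies all axioms of $J,D$, so it contains $\cov_\T$. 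Hence $Q(U(\T))=\T$. I expect the main obstacle to be the careful bookkeeping in this final step --- in particular recognising the new locax axioms as consequences of localization in $\T$ together with the fact that $\circ_\T$ is determined by its restriction on singletons. The remaining verifications are routine applications of the lemmas of Sections~\ref{bcovi} and~\ref{indqu}.
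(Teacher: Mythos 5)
Your proposal is correct and follows essentially the same route as the paper: $Q(\S)$ is the unital convergent cover generated from the same axiom-set with $\delta(a,b)=a\circ_\S b$ and the same unit, the counit is the identity relation on $S$ (a $\circ$-basic cover map since $\cov_\S\sub\cov_{Q(\S)}$ and the operation and unit are unchanged modulo $=_\A$), the factorization $\widehat{r}=r$ is forced and is a unital convergent cover map by Lemma~\ref{lemma continuity for generated convergent covers}, and surjectivity is obtained exactly as in the paper by applying $Q$ to $U(\T)$. Your slightly more explicit bookkeeping for surjectivity, and the observation that the unit axioms need not be added explicitly because they are already derivable, are harmless refinements of the same argument.
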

\begin{proof}
Let $\S$ be a $\circ$-basic cover inductively generated by an axiom-set $I,C$. We call
$Q(\S)$ the unital convergent cover generated from the same
axiom-set, the same $\circ$ and the same unit of $\S$ as described
in proposition~\ref{prop. ind. gen. of convergent covers} and the
remark following it. We need to show that for every
inductively generated $\circ$-basic cover $\S$ there exists a
morphism $j_\S$ $:$ $Q(\S)\, \longrightarrow\, \S$ in {\bf
BCov$_{\circ,\,i}$} such that, for every $r$ $:$
$\T\longrightarrow\S$ in {\bf BCov$_{\circ,\,i}$} with $\T$ a unital
convergent cover, there exists a unique unital convergent cover map
$\widetilde{r}$ such that the following diagram in {\bf
BCov}$_{\circ,\,i}$ commutes.
$$\xymatrix{ & & \S \\
\T \ar@{->}[urr]^{r} \ar@{->}[rr]_{\widetilde{r}} & & {Q}(\S)
\ar@{->}[u]_{j_\S} }$$ Let $j_\S$ be the identity relation on the
set $S$. This is a $\circ$-basic cover map from $Q(\S)$ to $\S$. In
fact $a\cov_{Q(\S)} C(a,i)$ holds for all $a\in S$ and $i\in I(a)$,
because $Q(\S)$ is generated by an axiom-set extending $I,C$.
Moreover,  $j_\S$ respects convergence and units since $Q(\S)$ has
the same $\circ$ and unit of $\S$. To complete the proof it is
sufficient to define $\widetilde{r}$ as $r$ itself given that $r$ is
also a map toward $Q(\S)$ by lemma~\ref{lemma continuity for
generated convergent covers}.

Since every inductively generated unital convergent cover can be
obtained as in proposition~\ref{prop. ind. gen. of convergent
covers} we conclude that $Q$ is surjective.
\end{proof}

The above proposition is a refinement of the universal
property of quantale presentations in \cite{bs}. In fact, the
authors of \cite{bs} work with a monoid operation on the base. Hence
their result corresponds to the existence of a right adjoint,
from  the subcategory {\bf
BCov$_{\bullet,\,i}$} (see below for a precise definition) of 
{\bf BCov$_{\circ,\,i}$},
 that is just a restriction of our functor $Q$.

The last step is the generation of a formal
cover from an inductively generated unital convergent cover.

\begin{proposition}\label{prop. adjunction3}
The forgetful functor from {\bf FCov$_{i}$} to \mbox{\bf
uCBCov$_{i}$} has a right adjoint
$$L:\ \mbox{\bf uCBCov$_{i}$}\, \longrightarrow \, \mbox{\bf FCov$_{i}$}$$
that is surjective on objects.
\end{proposition}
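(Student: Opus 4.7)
The plan is to mimic the proofs of propositions~\ref{prop. adjunction1} and~\ref{prop. adjunction2}, using proposition~\ref{prop. ind. gen. of formal covers} to construct $L$ on objects and taking the counit $j_\S$ to be the identity relation on the base.

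Given an inductively generated unital convergent cover $\S=(S,\cov_\S,\circ_\S,I_\S)$, first fix an axiom-set $I,C$ and a map $\delta$ presenting $\S$ in the sense of proposition~\ref{prop. ind. gen. of convergent covers} (with the extra unit data). I would define $L(\S)$ to be the formal cover obtained from the same $S$, $I,C$ and $\delta$ by the construction of proposition~\ref{prop. ind. gen. of formal covers}: namely, I generate the basic cover from the axiom-set $J'',D''$ which augments $J',D'$ with weakening and contraction axioms. Thus $L(\S)$ has the same underlying set $S$, the same operation (up to $=_\A$), the same unit, and its cover extends $\cov_\S$ in the least way that makes $\circ_\S$ coincide with $\wedge^\A$. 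Let $j_\S$ be the identity relation on $S$; it is a unital convergent cover map from $L(\S)$ to $\S$ because $\cov_{L(\S)}$ extends $\cov_\S$ and because $\circ$ and the unit are literally the same.

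For the universal property I would check: given $\T\in\mathbf{FCov}_i$ and a unital convergent cover map $r:\T\to\S$, the same relation $r$ is also a formal cover map $\widetilde r:\T\to L(\S)$. Uniqueness is forced, since composition with the identity $j_\S$ fixes $r$ as a relation. For existence, $r$ already preserves $\circ_\S$ and $I_\S$, so it suffices to prove that $r$ is a basic cover map from $\T$ to $L(\S)$. By lemma~\ref{lemma continuity for generated formal covers}, one only has to verify $r$ against each axiom of $L(\S)$: for the original axioms of $\cov_{I,C}$ this is because $r$ is a basic cover map to $\S$; for the convergence axioms of $J',D'$ (commutativity, associativity, localization) this follows from $r$ being a convergent cover map together with the corresponding properties of $\T$; and for the new weakening and contraction axioms in $J'',D''$ it follows from $\T$ itself being a formal cover, since $r^-(a\circ_\S b)=_\A r^-a\circ_\T r^-b$ is covered both by $r^-a$ and $r^-b$ in $\T$ by weakening, and $r^-a\cov_\T r^-a\circ_\T r^-a =_\A r^-(a\circ_\S a)$ by contraction.

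For surjectivity on objects I would observe that if $\S$ is already a formal cover, then weakening and contraction are derivable in $\cov_\S$, so adding the axioms of $J''\setminus J'$ produces no new consequences; hence $\cov_{L(\S)}=\cov_\S$ and $L(\S)=\S$ up to the canonical isomorphism of lemma~\ref{lemma uniqueness of circ for formal covers}.

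The only genuinely non-routine step is the induction verifying that $r:\T\to L(\S)$ respects the enlarged axiom-set; this is where the hypothesis that $\T$ is a formal cover, rather than merely a unital convergent cover, is essential, and it is precisely the content that makes $L$ a right adjoint rather than just a functor. Everything else is bookkeeping parallel to the proofs of propositions~\ref{prop. adjunction1} and~\ref{prop. adjunction2}.
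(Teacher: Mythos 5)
Your proposal is correct and follows essentially the same route as the paper: $L(\S)$ is generated from the same presentation via proposition~\ref{prop. ind. gen. of formal covers}, the counit is the identity relation on $S$, the lift $\overline{r}$ is $r$ itself, and surjectivity comes from the fact that inductively generated formal covers arise by adding weakening and contraction to a unital convergent cover. The only differences are cosmetic: you re-verify the enlarged axiom-set case by case (the reduction to axioms is really lemma~\ref{lemma continuity for generated covers}, and your weakening/contraction checks reprove what the paper simply delegates to lemma~\ref{lemma continuity for generated formal covers}), which is harmless.
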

\begin{proof}
Let $\S$ be a convergent cover with unit inductively generated by an axiom-set $I,C$. We call
$L(\S)$ the formal cover generated from the same $I,C$ and the
same $\circ$ of $\S$ as in proposition~\ref{prop. ind. gen. of
formal covers}. We show that there exists a morphism
$k_\S$ $:$ $L(\S)\, \longrightarrow\, \S$ in {\bf uCBCov$_{i}$} such
that, for every $r:\T\longrightarrow\S$ in {\bf uCBCov$_{i}$} with
$\T$ a formal cover, there exists a unique continuous map
$\overline{r}$ such that the following diagram in {\bf uCBCov}$_{i}$
commutes.

$$\xymatrix{ & & \S \\
\T \ar@{->}[urr]^{r} \ar@{->}[rr]_{\overline{r}} & & {L}(\S)
\ar@{->}[u]_{k_\S} }$$ Let $k_\S$ be the identity relation on the
set $S$. This is a unital convergent cover map from $L(\S)$ to $\S$
by lemma~\ref{lemma continuity for generated convergent covers}. In
fact $a\cov_{L(\S)}
C(a,i)$ holds for all $a\in S$ and $i\in I(a)$, because $L(\S)$ is
generated by an axiom-set extending $I,C$. Moreover, $k_\S$
trivially respects $\circ$ and units. Finally, we
define $\overline{r}$ as $r$ itself, since $r$ is also a map
into $L(\S)$ by lemma~\ref{lemma continuity for generated formal
covers}.

Since every inductively generated formal cover can be obtained from
an inductively generated unital convergent cover by adding the
axioms of weakening and contraction, we conclude that $L$ is
surjective.
\end{proof}

The functors $O$, $Q$ and $L$   
give a decomposition of the right adjoint of the forgetful
functor from formal covers to basic covers:
\begin{corollary}\label{prop. composition}
The functor $L\cdot Q \cdot O:\  \mbox{\bf BCov$_{i}$}\
\longrightarrow \ \mbox{\bf FCov$_i$} $ is a right adjoint to the
forgetful functor from {\bf FCov$_i$} to {\bf BCov$_{i}$}.
\end{corollary}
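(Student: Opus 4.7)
The plan is to combine the three adjunctions established in propositions \ref{prop. adjunction1}, \ref{prop. adjunction2} and \ref{prop. adjunction3} using the classical categorical fact that adjunctions compose: if $F\dashv G$ and $F'\dashv G'$ with the target of $G$ equal to the source of $G'$, then $F\cdot F'\dashv G'\cdot G$. The three cited propositions give three adjoint pairs $U_1\dashv O$, $U_2\dashv Q$, $U_3\dashv L$, where $U_1$, $U_2$ and $U_3$ are the forgetful functors from $\mathbf{BCov}_{\circ,i}$ to $\mathbf{BCov}_i$, from $\mathbf{uCBCov}_i$ to $\mathbf{BCov}_{\circ,i}$, and from $\mathbf{FCov}_i$ to $\mathbf{uCBCov}_i$, respectively. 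Hence the composite $U_1\cdot U_2\cdot U_3$ is a left adjoint to $L\cdot Q\cdot O$.

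The remaining step is to identify the composite $U_1\cdot U_2\cdot U_3$ with the forgetful functor $\mathbf{FCov}_i\to\mathbf{BCov}_i$ mentioned in the statement. This is essentially definitional: an inductively generated formal cover $(S,\cov,\circ)$ is in particular an inductively generated unital convergent cover (by proposition \ref{prop. every formal cover is unital}, with $I=S$ as a unit), and thus in particular an inductively generated $\circ$-basic cover (since weakening, contraction and localization on top of the pre-convergence operation certainly imply the axioms of definition \ref{bcirc}), and finally forgetting $\circ$ and the unit leaves the underlying inductively generated basic cover. On morphisms, a formal cover map is already defined as a unital convergent cover map, which is a $\circ$-basic cover map, which is a basic cover map; none of the intermediate forgetful functors does anything on morphisms beyond the inclusion.

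There is no real obstacle: all the substance of the corollary sits in the three preceding propositions, and the present statement is merely the observation that composing the three right adjoints yields the promised right adjoint to the one-step forgetful functor. The only thing worth double-checking in a careful write-up is that each intermediate category sits inside the previous one via the structure-dropping functor asserted above, so that the composition makes literal sense and is not just an equivalence of functors up to isomorphism.
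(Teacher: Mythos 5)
Your proposal is correct and is exactly the argument the paper intends: the corollary follows by composing the three adjunctions of propositions \ref{prop. adjunction1}, \ref{prop. adjunction2} and \ref{prop. adjunction3}, and identifying the composite of the three forgetful functors with the one-step forgetful functor from {\bf FCov$_i$} to {\bf BCov$_i$} (using that a formal cover is by definition a unital convergent cover, hence a $\circ$-basic cover, over its underlying basic cover). Your extra care about the identification being literal rather than up to isomorphism is a reasonable check, but it raises no issue here since {\bf FCov} is defined as a full subcategory of {\bf uCBCov} and the intermediate functors genuinely just drop structure.
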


This is a predicative counterpart of the existence of a left adjoint
to the forgetful functor from the category of frames to that of
suplattices.
If $\S$  already is  a formal cover, that is $Sat(\A)$ is a locale, then the construction
$L\cdot Q\cdot O\,(\S)$  gives a predicative presentation of
what is known as the \emph{lower power locale} \cite{vickers} of
$Sat(\A)$.

>From propositions \ref{prop. adjunction2} and \ref{prop. adjunction3}, we conclude:
\begin{corollary}\label{freef}
The functor $L\cdot Q:\  \mbox{\bf BCov$_{\circ,i}$}\
\longrightarrow\ \mbox{\bf FCov$_i$}$ is right adjoint to the forgetful functor from
{\bf FCov$_i$} to {\bf BCov$_{\circ,i}$} and, moreover, is surjective on objects.
\end{corollary}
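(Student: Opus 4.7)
The plan is to derive this corollary as an immediate consequence of Propositions \ref{prop. adjunction2} and \ref{prop. adjunction3}, together with a standard fact about composition of adjunctions. First I would observe that the forgetful functor $U \colon \mbox{\bf FCov}_i \longrightarrow \mbox{\bf BCov}_{\circ,i}$ factors as the composition of the two forgetful functors appearing in those propositions: namely $\mbox{\bf FCov}_i \xrightarrow{U_1} \mbox{\bf uCBCov}_i \xrightarrow{U_2} \mbox{\bf BCov}_{\circ,i}$. This factorization is essentially by definition, since a formal cover is in particular a unital convergent cover (proposition \ref{prop. every formal cover is unital}) and a unital convergent cover in particular is a $\circ$-basic cover (all the defining clauses of $\circ$-basic cover in definition \ref{bcirc} are already part of being a unital convergent cover, with localization being an additional property of the latter).

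Next, by proposition \ref{prop. adjunction3} the functor $L$ is right adjoint to $U_1$, and by proposition \ref{prop. adjunction2} the functor $Q$ is right adjoint to $U_2$. Since right adjoints compose and the right adjoint of a composition is the composition of the right adjoints (taken in the opposite order), the functor $L \cdot Q$ is right adjoint to $U_2 \cdot U_1 = U$, which is the first claim. Concretely, the unit and counit of $L\cdot Q \dashv U$ are obtained by pasting those of $L\dashv U_1$ and $Q\dashv U_2$ in the usual way, but no new computation is needed since this is a purely formal consequence of the two previous adjunctions.

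For the surjectivity statement, both propositions explicitly assert that $L$ and $Q$ are surjective on objects. Since the composition of functors that are surjective on objects is again surjective on objects, $L\cdot Q$ is surjective on objects as well. There is no real obstacle here; the only point that required work was setting up the two intermediate adjunctions in sections \ref{indqu} and \ref{inductive generation formal covers}, which has already been done. In short, this corollary is just the compact packaging of that modular decomposition.
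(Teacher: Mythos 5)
Your proposal is correct and follows essentially the same route as the paper, which derives the corollary directly from Propositions \ref{prop. adjunction2} and \ref{prop. adjunction3} by composing the two adjunctions (and the two surjections on objects). Your explicit factorization of the forgetful functor through {\bf uCBCov$_i$} and the appeal to the standard composition of right adjoints is exactly the implicit content of the paper's one-line deduction.
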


This result shows that the method of inductively generating formal
covers from an axiom-set enjoys a universal property with respect to
the category {\bf BCov$_{\circ,i}$}.
Besides the functor $L\cdot Q$ presented here, there are in the
literature other ways for generating a formal cover given some
initial data. We need to recall three such methods to be able to
explain how our approach is a refinement of them all.

In \cite{cssv} it is shown how to generate a $\leq$-formal cover
starting from an axiom-set on a preordered set $(S,\leq)$. One can
easily check that, in the specific case in which $\delta(a\circ b)$
= $a\fregiubi_\leq b$, our rules to generate formal covers
(proposition \ref{prop. ind. gen. of formal covers}) become
perfectly equivalent to those given in \cite{cssv}. We now can see
that the construction in \cite{cssv} is {\it co-free} with respect to a suitable
subcategory of {\bf BCov}$_{\circ,\,i}$.

\begin{definition} We call {\bf
BCov}$_\leq$ the full subcategory of {\bf BCov}$_\circ$ whose
objects satisfy $\circ = \fregiu_\leq$ for some preorder $\leq$ on
the base. {\bf BCov}$_{\leq,i}$ is its full subcategory whose
objects are inductively generated.
\end{definition}

As a consequence of the results above, we have:

\begin{corollary}
The functor $L\cdot Q$ restricts to a functor from {\bf
BCov}$_{\leq,i}$ to the category $\leq$-{\bf FCov}$_i$ of inductively generated
$\leq$-formal covers. This restriction is right adjoint to the  forgetful
functor in the opposite direction and, moreover, is surjective on objects.
\end{corollary}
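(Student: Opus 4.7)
The plan is to deduce this corollary from Corollary~\ref{freef} by verifying that the adjunction between \textbf{BCov}$_{\circ,i}$ and \textbf{FCov}$_i$ restricts properly to the full subcategories \textbf{BCov}$_{\leq,i}$ and $\leq$-\textbf{FCov}$_i$. This amounts to checking that $L\cdot Q$ and the forgetful $U$ send the subcategories into one another, and then establishing surjectivity on objects. The central point is that $L\cdot Q$ sends \textbf{BCov}$_{\leq,i}$ into $\leq$-\textbf{FCov}$_i$: given $\S=(S,\cov,\fregiu_\leq)$ in \textbf{BCov}$_{\leq,i}$, since $Q$ and $L$ only enlarge the cover while leaving the operation untouched, we have $L\cdot Q(\S)=(S,\cov'',\fregiu_\leq)$ for some $\cov''\supseteq\cov$. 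As a formal cover with $\circ''=\fregiu_\leq$, by Lemma~\ref{equivcircmeet} the result satisfies the $\fregiu_\leq$-right rule, which is precisely $\leq$-right. For $\leq$-left, the weakening axiom $a\fregiubi_\leq b\cov'' a$ added by $L$, specialized to $b=a$ (and using that $x\leq a$ entails $x\in a\fregiubi_\leq a$), yields $\leq\,\sub\,\cov''$ on singletons, whence $\leq$-left follows by transitivity of $\cov''$.

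Conversely, the forgetful $U$ sends a $\leq$-formal cover $(T,\cov_\T,\leq)$ to $(T,\cov_\T,\fregiu_\leq)$, which is a $\circ$-basic cover: $\fregiu_\leq$ is strictly distributive over unions, strictly associative and commutative, and admits $T$ as a unit because $\leq$-left applied to the reflexivity of $\cov_\T$ gives $\leq\,\sub\,\cov_\T$. Since both inclusions are full subcategories and both functors restrict, the natural Hom-bijection of Corollary~\ref{freef} restricts verbatim, giving the desired adjunction.

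For surjectivity, given an inductively generated $\leq$-formal cover $\T=(T,\cov_\T,\leq)$, I take $\S:=(T,\cov_\T,\fregiu_\leq)$, which lies in \textbf{BCov}$_{\leq,i}$ by the argument above. Then $L\cdot Q(\S)$ has cover generated from the axiom-set of $\cov_\T$ together with the $Q$-axioms (commutativity, associativity, localization for $\fregiu_\leq$) and the $L$-axioms (weakening and contraction). Each such axiom is already derivable in $\cov_\T$: commutativity and associativity reduce to strict set-theoretic identities like $U\fregiubi_\leq V=V\fregiubi_\leq U$ and hold by reflexivity of $\cov_\T$; weakening and contraction follow from $\leq$-left together with reflexivity of $\leq$. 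The main obstacle to verify will be localization: its instance $a\cov D(b,j)\fregiubi_\leq c$ (for $a\in b\fregiubi_\leq c$, i.e.\ $a\leq b$ and $a\leq c$, with $j\in J(b)$) follows by applying $\leq$-left to $a\leq b$ and $b\cov_\T D(b,j)$, obtaining $a\cov_\T D(b,j)$, and then combining with $a\cov_\T c$ via $\leq$-right in $\T$. Hence none of the added axioms enlarges the inductively generated cover, so $L\cdot Q(\S)=\T$, establishing surjectivity.
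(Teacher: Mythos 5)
Your proposal is correct and follows essentially the route the paper intends: the corollary is stated there as an immediate consequence of Corollary~\ref{freef} (together with the observation that generation with $\delta(a,b)=a\fregiubi_\leq b$ reproduces the construction of \cite{cssv}), and your argument simply fills in the details left implicit — closure of both functors on the full subcategories (with $\leq$-right from $\circ$-right and $\leq$-left extracted from weakening at $b=a$), restriction of the Hom-bijection, and surjectivity by checking that the added commutativity, associativity, locax, weakening and contraction axioms are already derivable in a $\leq$-formal cover. All these verifications are sound, so no gap remains.
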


This adjunction explains in what sense the construction 
of inductively generated $\leq$-formal covers in \cite{cssv} is co-free. 
A direct proof can be obtained  by the instantiation of 
lemma~\ref{lemma continuity for generated formal covers} to formal
covers where $\circ=\fregiu_{\leq}$.
This  gives precisely the result which has played  a key role when 
dealing with inductively generated $\leq$-formal
covers (see for instance \cite{milly2005}). 
Note, however, that the
adjunction between {\bf BCov}$_{\leq,i}$ and $\leq$-{\bf FCov}$_i$
cannot be decomposed via quantales. In fact, the functor $Q$, when
applied to an object in {\bf BCov}$_{\leq,i}$, gives already a
$\leq$-formal cover (weakening and contraction automatically hold).
In other words, $L\cdot Q$ and $Q$ coincide on {\bf
BCov}$_{\leq,i}$.

The second approach we want to recall is that described in
\cite{bs}. There a monoid structure is assumed on the base. So the
starting data for generating a formal cover can be thought of as
objects of the following category.

\begin{definition} We call {\bf
BCov}$_\bullet$ the full subcategory of {\bf BCov}$_\circ$ whose
object satisfy $a\circ b=\{a\bullet b\}$ (for all $a,b\in S$) for
some monoid operation $\bullet$ on the base $S$. {\bf
BCov}$_{\bullet,i}$ is its full subcategory whose objects are
inductively generated.
\end{definition}

As before, the functor $L\cdot Q$ can be restricted to {\bf
BCov}$_{\bullet,i}$ to obtain the following categorical result
of the universal property in \cite{bs}:

\begin{corollary}
The functor $Q$  restricted to a functor from {\bf
BCov}$_{\bullet,i}$ to $\bullet${\bf-uCBCov}  is a right adjoint
to the  forgetful functor in the opposite direction.
\end{corollary}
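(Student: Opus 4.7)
The plan is to derive this as a restriction of Proposition~\ref{prop. adjunction2}. The first step is to verify that the functor $Q: \mathbf{BCov}_{\circ,i}\to\mathbf{uCBCov}_i$ sends $\mathbf{BCov}_{\bullet,i}$ into $\bullet\mathbf{-uCBCov}$. This is immediate from the construction: $Q(\S)$ is built on the same base and with the same operation $\circ$ as $\S$ (only the cover is enlarged, see proposition~\ref{prop. ind. gen. of convergent covers}). Hence, if $a\circ b=\{a\bullet b\}$ holds in $\S$, the same equation holds in $Q(\S)$, so $Q(\S)$ is a pretopology. On morphisms, $Q$ acts as the identity (since the defining condition for a convergent cover map, when $\circ$ is given by a binary operation $\bullet$ on elements, reduces to the condition for a $\bullet$-basic cover map).

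Next I would check that the unit $j_\S:Q(\S)\longrightarrow\S$ from the proof of Proposition~\ref{prop. adjunction2} is a morphism in $\mathbf{BCov}_{\bullet,i}$. Since it is the identity relation on the base and the operation $\bullet$ is literally the same in $\S$ and $Q(\S)$, preservation of $\bullet$ and unit are trivial.

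Then I would establish the universal property. Given any $r:\T\longrightarrow\S$ in $\mathbf{BCov}_{\bullet,i}$ with $\T$ an object of $\bullet\mathbf{-uCBCov}$, Proposition~\ref{prop. adjunction2} already provides a unique convergent cover map $\widetilde r:\T\longrightarrow Q(\S)$ (which coincides with $r$ itself as a relation) making the triangle commute. All I need to add is that $\widetilde r$ is automatically a $\bullet$-morphism in $\bullet\mathbf{-uCBCov}$, because the preservation equations $\widetilde{r}^-(a\bullet_\S b)=_\A\widetilde{r}^-a\circ_\T\widetilde{r}^-b$ and $\widetilde{r}^-I_{Q(\S)}=_\A I_\T$ are already the convergent cover map conditions for $\widetilde r=r$, which hold by assumption.

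The main obstacle, and what makes the statement meaningful, is keeping straight the three layers: the universal property ``in the big category'' ($\mathbf{BCov}_{\circ,i}$ vs $\mathbf{uCBCov}_i$) restricts to one ``in the small category'' ($\mathbf{BCov}_{\bullet,i}$ vs $\bullet\mathbf{-uCBCov}$) only because both the co-unit $j_\S$ and the comparison morphism $\widetilde r$ remain inside the $\bullet$-world; this is the content of the first two steps. Once this is verified, no new inductive argument is needed: the adjunction is simply inherited from Proposition~\ref{prop. adjunction2}, and surjectivity on objects (although not claimed in the corollary) could be recovered in the same way, since every inductively generated pretopology is generated by its own axiom-set together with the given monoid operation.
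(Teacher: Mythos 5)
Your proposal is correct and matches the paper's intent: the corollary is obtained, exactly as you do, by restricting the adjunction of Proposition~\ref{prop. adjunction2}, observing that $Q(\S)$ keeps the same base and the same operation $\circ$ (so singleton products are preserved), that the counit $j_\S$ is the identity relation and hence stays in the $\bullet$-subcategories, and that the comparison map $\widetilde r$ is automatically a morphism of $\bullet${\bf-uCBCov} since that category is full in {\bf uCBCov}. The paper leaves these checks implicit, so your write-up simply makes explicit the same argument.
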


We finally come to the third approach, namely Johnstone's coverage
theorem in \cite{stone spaces,vickers}. As one can recognize, Johnstone's result
reads in our framework just as proposition \ref{prop. ind. gen. of formal covers} in
the case in which $S$ has a $\wedge$-semilattice structure and
$a\circ b=\{a\wedge b\}$. Hence the data required
by Johnstone's method amount to an inductively generated basic cover
whose base has a semilattice structure. So we give the following:

\begin{definition} We call {\bf
BCov}$_\wedge$ the full subcategory of {\bf BCov}$_\circ$ whose
objects satisfy $a\circ b=\{a\wedge b\}$ (for all $a,b\in S$) for
some semilattice operation $\wedge$ on the base $S$. {\bf
BCov}$_{\wedge,i}$ is its full subcategory whose objects are
inductively generated.
\end{definition} Clearly {\bf BCov}$_\wedge$ is a subcategory both
of {\bf BCov}$_\leq$ and of {\bf BCov}$_\bullet$. With this
notation, Johnstone's result becomes:

\begin{proposition}
The functor $L\cdot Q$ restricted to a functor from {\bf
BCov}$_{\wedge,i}$ to {\bf FCov}$_i$   is right adjoint to the
forgetful functor in the opposite direction.
\end{proposition}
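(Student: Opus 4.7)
This proposition is essentially Johnstone's coverage theorem reformulated in the predicative framework of basic covers, with frames replaced by formal covers and meet-semilattices by $\wedge$-basic covers. My plan is to deduce it as a restriction of the more general adjunction given by Corollary~\ref{freef}, which establishes that $L\cdot Q$ is right adjoint to the forgetful functor $\mathbf{FCov}_i \to \mathbf{BCov}_{\circ,i}$.

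First, I would verify that $\mathbf{BCov}_{\wedge,i}$ embeds as a full subcategory of $\mathbf{BCov}_{\circ,i}$. Given an object of $\mathbf{BCov}_{\wedge,i}$ with $a \circ b = \{a \wedge b\}$ for some meet-semilattice operation $\wedge$ with top element $\top$, all clauses of Definition~\ref{bcirc} are satisfied: associativity and commutativity of $\circ$ hold on the nose (and hence a fortiori modulo $=_\A$), distributivity over unions is immediate from the description of $\circ$ on singletons, and $\{\top\}$ serves as a unit since $a \circ \{\top\} = \{a \wedge \top\} = \{a\}$. Fullness of the embedding is automatic, since the morphism conditions of the two categories coincide once the $\wedge$-structure on the base is present.

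Next, since $L\cdot Q$ restricted to $\mathbf{BCov}_{\wedge,i}$ still lands in $\mathbf{FCov}_i$, I obtain the required functor. For the universal property, given $\S \in \mathbf{BCov}_{\wedge,i}$ and $\T \in \mathbf{FCov}_i$, Corollary~\ref{freef} furnishes a natural bijection between $\mathbf{FCov}_i$-morphisms $L\cdot Q(\S) \to \T$ and $\mathbf{BCov}_{\circ,i}$-morphisms $\S \to U\T$. By fullness of the embedding $\mathbf{BCov}_{\wedge,i} \hookrightarrow \mathbf{BCov}_{\circ,i}$, these latter morphisms are the same as $\mathbf{BCov}_{\wedge,i}$-morphisms $\S \to U\T$, which gives the required adjunction.

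The main obstacle is ensuring that the forgetful functor $U:\mathbf{FCov}_i \to \mathbf{BCov}_{\wedge,i}$ is well-defined on objects, since the base of an arbitrary inductively generated formal cover need not carry a meet-semilattice structure. My resolution would be to invoke the $\P_\omega$-variant of Lemma~\ref{lemma dottification} indicated at the end of Section~\ref{section other definitions}: every formal cover is isomorphic in $\mathbf{FCov}_i$ to a formal cover $(\P_\omega(S),\cov',\cup)$ whose base is a semilattice under $\cup$, with $\emptyset$ as unit. This makes $U$ canonically defined up to isomorphism, which is enough to transport the universal property and establish the adjunction.
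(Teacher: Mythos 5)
Your proposal is correct and takes essentially the route the paper intends: the proposition is obtained by restricting the adjunction of Corollary~\ref{freef} along the full inclusion $\mathbf{BCov}_{\wedge,i}\hookrightarrow\mathbf{BCov}_{\circ,i}$, exactly as the paper does (without spelling out a proof) for the analogous $\leq$- and $\bullet$-cases. You are in fact more explicit than the paper on the one delicate point, namely the forgetful functor $\mathbf{FCov}_i\to\mathbf{BCov}_{\wedge,i}$ obtained through the $\P_\omega$-semilattification indicated after Lemma~\ref{lemma dottification}; just note that the hom-bijection of Corollary~\ref{freef} should be quoted with the arrows the other way around, $\mathbf{FCov}_i(\T,\,L\cdot Q(\S))\cong\mathbf{BCov}_{\circ,i}(U\T,\,\S)$, since $L\cdot Q$ is the \emph{right} adjoint, and that to be fully precise one should also verify that the transported cover on $\P_\omega(S)$ is inductively generated (and that the semilattification is functorial), so that this functor really lands in $\mathbf{BCov}_{\wedge,i}$ and not merely in $\mathbf{BCov}_{\wedge}$.
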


The following diagram summarizes the adjunctions discussed above
(each forgetful functor $U$ is a left adjoint) together with the
(full) inclusions between the various categories. It is easy to check
that all sub-diagrams commute.
$$\xymatrix{
&& {\bf BCov}_{\wedge,i} \ar@{^{(}->}[d] \ar@<1ex>@/^2pc/[ddrr]
  \ar@{^{(}->}@/_1.5pc/@<-4ex>[ddd] \\
&& {\bf BCov}_{\leq,i}\ar@{^{(}->}[d] \ar@/^1pc/[drr] \\
{\bf BCov}_i \ar@<-0.5ex>[rr]_{O} && {\bf BCov}_{\circ,i}
\ar@<-0.5ex>[ll]_{U} \ar@<-0.5ex>[r]_{Q} & {\bf uCBCov}_i
\ar@<-0.5ex>[r]_{L} \ar@<-0.5ex>[l]_{U} & {\bf FCov}_i
\ar@<-0.5ex>[l]_{U} \ar@<-1ex>@/_1pc/[ull]_{U}
\ar@<-2ex>@/_2pc/[uull]_{U} \ar@/^1pc/[dl]_{U} \\
&& {\bf BCov}_{\bullet,i} \ar@{^{(}->}[u] \ar@<-0.5ex>[r] &
\bullet-{\bf uCBCov}_i \ar@<-0.5ex>[l]_{U} \ar@{^{(}->}[u]
 \ar@<-1ex>@/_1pc/[ur]
 }$$

There is also another way to obtain a formal cover from an arbitrary
basic cover, which however does not lead to a functor from {\bf
BCov}$_i$ to {\bf FCov}$_i$. This construction starts from a basic
cover $\S=(S,\cov)$ generated from an axiom-set $I,C$ and applies
the method of proposition \ref{prop. ind. gen. of formal covers}
with $\delta(a,b)=a\fregiu b$, where $\fregiu$ is defined through
$\cov$ itself as in (\ref{def. <= indotto}) and (\ref{eq. def.
fregiu indotto}). No doubt, this method produces a formal cover,
namely $L\cdot Q\,(S,\cov,\fregiu)$. However, it cannot be extended
to a functor from {\bf
BCov}$_i$ to {\bf FCov}$_i$ since there is no reason for a basic cover map to
respect the operation $\circ=\fregiu$. Note also that the formal
cover $L\cdot Q\,(S,\cov,\fregiu)$ is quite different from $L\cdot
Q\cdot O\,(S,\cov)$. This is well visible when $\S$ itself is
a $\cov$-formal cover. In fact, in this case $L\cdot
Q\,(S,\cov,\fregiu)$ is $(S,\cov,\fregiu)$ itself, while $L\cdot
Q\cdot O\,(\S)$  presents the lower power locale of
$Sat(\A)$ and hence it is not isomorphic to $\S$. In general, one
can see that the formal cover $L\cdot Q\,(S,\cov,\fregiu)$ presents
\emph{the largest frame contained in the suplattice} $Sat(\A)$.

\section*{Conclusions}

We have presented a new definition of \emph{formal cover/formal topology} 
that generalizes all other definitions given so far. This has been
obtained by considering an operation $\circ$ between subsets which
is uniquely determined by its trace on singletons. Our approach
seems to gather all the advantages of previous definitions. It
allows us to reach the definition of formal cover in a modular way
passing through the case of quantales, as it happens for the
approach via a monoid operation on the base (see $\bullet$-formal
covers in section \ref{section other definitions}). At the same
time, it provides a uniform method of inductive generation that
includes the one originally introduced in \cite{cssv}.

Our new definition of  convergent covers and formal covers  allows us
to reproduce  Joyal-Tierney's characterization of quantales
and locales \cite{galoisexten} in a straightforward way. Previous
definitions of formal cover were not apt to this purpose. The
definition of $\leq$-formal cover does not generalize to represent
quantales. Also the original definition~\cite{IFS} is too specific;
it can be generalized as in \cite{pretopologies} and \cite{bs} to
represent quantales, but the only way one can see to obtain
Joyal-Tierney's characterization is to pass through the equivalence
with the new notion introduced here. 

Our new presentation of
convergence offers a uniform and modular method for generating
formal covers, convergent covers and basic covers inductively. This
uniformity allows us to recognize in what sense the various
inductive constructions provide free structures and thus refine
Johnstone's coverage theorem. Our analysis supplies a neat
decomposition of the well known adjunction associated to the free
frame over a suplattice. This would hardly be possible with the
presentation of formal covers in the literature.

\subsection*{Acknowledgements}

We thank Pino Rosolini for fruitful discussions on
Joyal-Tierney's representation of frames as monoids and Steve Vickers for 
pointing out the connection between free
frames over suplattices and lower power locales.

\end{document}